\newcommand{\D}{\mathrm{d}}
\newtheorem{thm}{Theorem}
\newtheorem{cor}{Corollary}
\newtheorem{defi}{Definition}
\newtheorem{lem}{Lemma}
\newtheorem{prop}{Proposition}
\newtheorem{rmk}{Remark}
\title{Energy Scaling and Asymptotic Properties of One-Dimensional Discrete
System with Generalized Lennard--Jones $(m,n)$ Interaction\footnotemark[3]
}
\author{
    Tao Luo
    \footnotemark[1] \footnotemark[2]\and
    Yang Xiang
    \footnotemark[1]\and
    Nung Kwan Yip
    \footnotemark[2]
}
\date{Received: date / Accepted: date}
\begin{document}
\maketitle
\allowdisplaybreaks
\noindent
\begin{abstract}
It is well known that elastic effects can cause surface instability. 
In this paper, we analyze a one-dimensional discrete system which
can reveal pattern formation mechanism resembling the ``step-bunching'' 
phenomena for epitaxial growth on vicinal surfaces. 
The surface steps are subject to long range pairwise interactions
taking the form of a general Lennard--Jones (LJ) type potential. It is 
characterized by two exponents $m$ and $n$ describing the singular and
decaying behaviors of the interacting potential at small and large distances,
and henceforth are called generalized LJ $(m,n)$ potential. 
We provide a systematic analysis of the asymptotic properties of the step 
configurations and the value of the minimum energy, in particular
their dependence on $m$ and $n$ and an additional parameter $\alpha$ 
indicating the interaction range.
Our results show that there is a phase transition between the 
bunching and non-bunching regimes. Moreover, some of our statements 
are applicable for 
any critical points of the energy, not necessarily minimizers.
This work extends the technique and results of \cite{Luo2016-p737-771} 
which concentrates on the case of LJ (0,2) potential (originated
from the elastic force monopole and dipole interactions between the steps). 
As a by-product, our result also leads to the well known fact that
the classical LJ (6,12) potential does not demonstrate step-bunching type
phenomena.
\end{abstract}
\noindent
{\bf Keywords:} non-local interaction, Lennard--Jones potential, energy scaling law, epitaxial growth,  step-bunching, crystallization\\
\noindent
{\bf Mathematics Subject Classification:} 74G65, 74G45, 74A50, 49K99

\newpage
\doublespacing
\section{Introduction}
Elasticity effects, which may cause surface morphological instability, are widely believed to be important in epitaxial film growth. 
In particular, the elastic effects can lead to the so called 
{\em step-bunching} instability.
This phenomenon has been modelled via both discrete \cite{Tersoff1995-p2730-2733,Liu1998-p1268-1271,Duport1995-p134-137,Duport1995-p1317-1350} and continuum \cite{Xiang2002-p241-258,Xiang2004-p35409-35409} approaches. Linear stability analysis and numerical simulations of these models have shown excellent agreement 
with experiments on epitaxial growth on vicinal surfaces \cite{Tersoff1995-p2730-2733,Xiang2004-p35409-35409}. Recently, the work \cite{Luo2016-p737-771} rigorously 
demonstrates the presence of step bunching and characterizes its profile as 
well as some scaling laws for an associated elastic energy. 
The results show that the bunching phenomenon depends very much on the form of the underlying interaction between the steps. In this work, we extend the method in \cite{Luo2016-p737-771} to investigate the energy scaling laws and other asymptotic behaviors of one-dimensional system with general pairwise interactions.
The key technical difficulty is the {\em non-locality} of the interaction 
between the steps. We refer to \cite{politi2000instabilities} 
for a review of various surface instability mechanisms in epitaxial growth.

In \cite{Luo2016-p737-771}, we studied the model originally introduced by Tersoff et al. \cite{Tersoff1995-p2730-2733,Liu1998-p1268-1271} for epitaxial growth on vicinal surface with elastic effects between the steps. 
Another model proposed by Duport et al. 
\cite{Duport1995-p134-137,Duport1995-p1317-1350} incorporated elastic interaction between adatoms and steps, the Schwoebel barrier, and other kinetic effects. 
The Tersoff's model concentrates mostly on the energetic or relaxation 
phenomena.
It is a discrete atomistic model, tracking all the step positions 
$\{x_i: x_i < x_{i+1}\}_{i\in \mathbb{Z}}$, which evolve according to the following dynamical law,
\begin{equation}\label{eq..TersoffOriginal}
  \frac{\D x_i}{\D t}=F_{\mathrm{ad}}\frac{x_{i+1}-x_{i-1}}{2}+B\left(\frac{\mu_{i+1}-\mu_i}{x_{i+1}-x_i}-\frac{\mu_i-\mu_{i-1}}{x_i-x_{i-1}}\right),\, i\in\mathbb{Z}.
\end{equation}
In the above, $l_i := x_{i+1} - x_i$ is the length of the $i$-th terrace 
(cf. Fig. \ref{figure..step}(a)), 
\begin{figure}
  \begin{center}
  \mbox{(a)\includegraphics[height=0.25\textwidth]{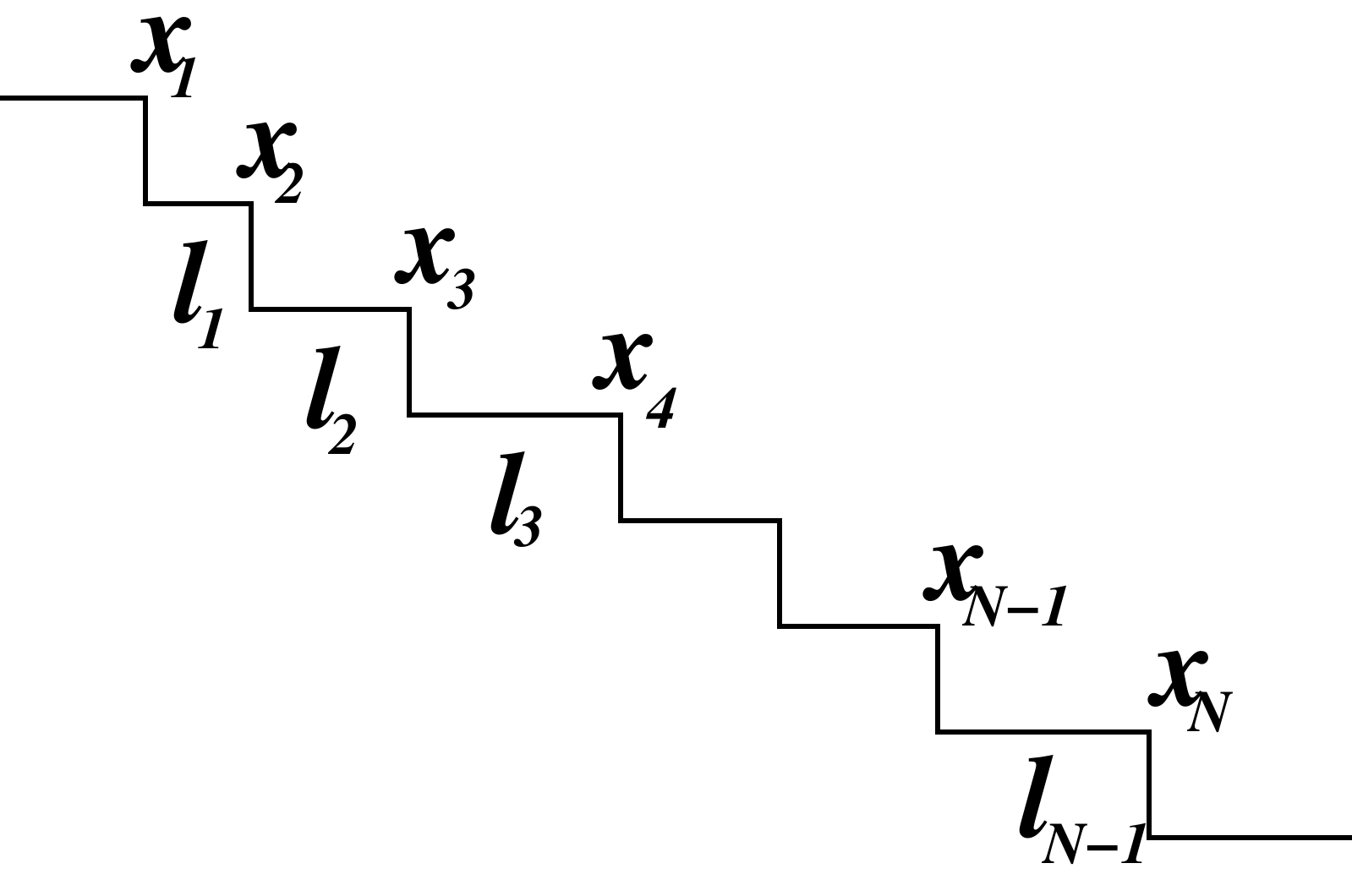}
\hspace{10pt}
  (b)\includegraphics[height=0.25\textwidth]{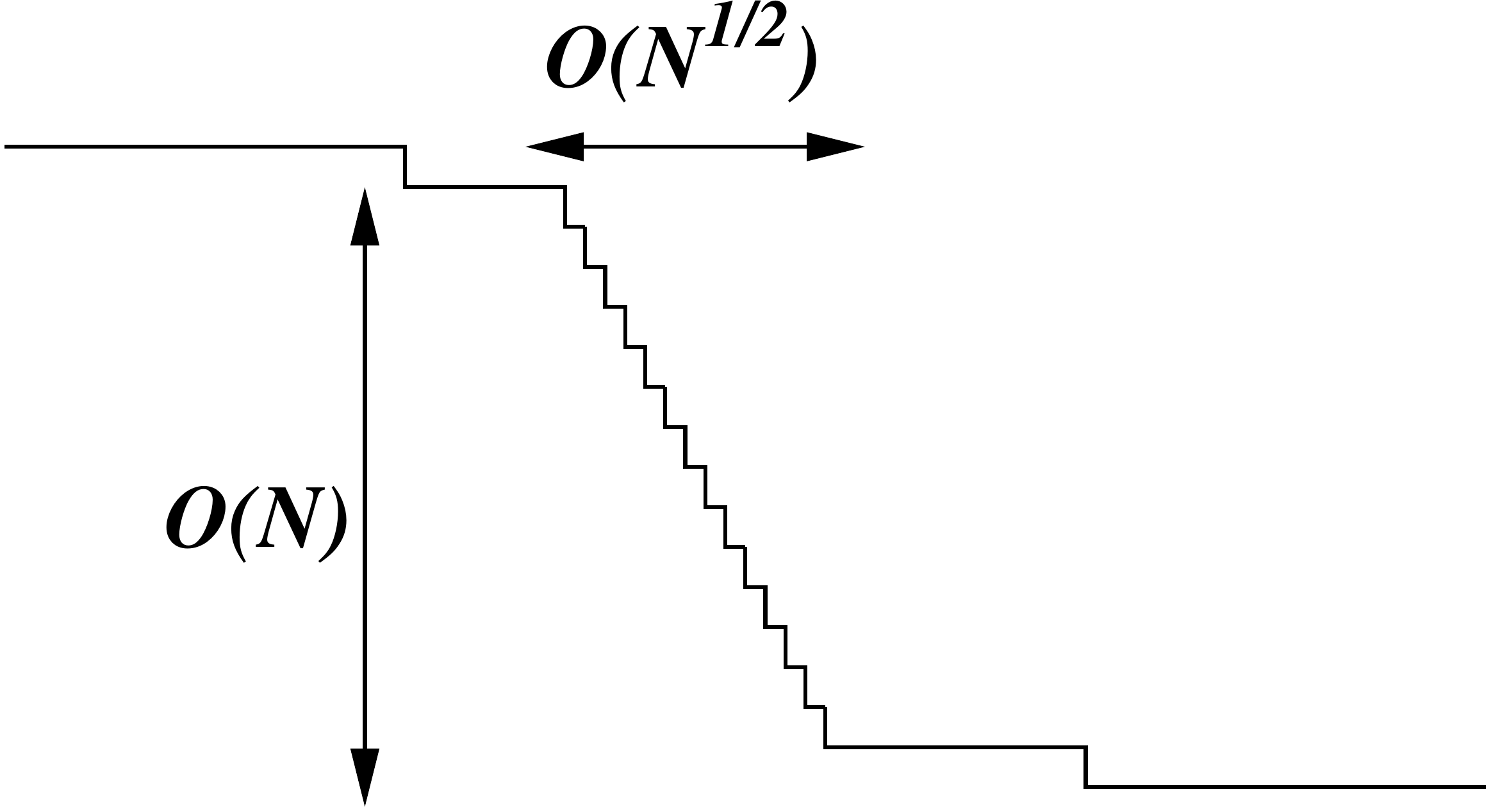}}
  \end{center}
  \caption{(a) A vicinal surface with steps. The step locations and the lengths 
  between them are denoted by $x_i$ and $l_i := x_{i+1}-x_i$ 
for $i\in\mathbb{Z}$.
  (b) The ``almost linear'' shape of a step bunch for $m=0, n=2$ in \cite{Luo2016-p737-771}.}
  \label{figure..step}
\end{figure}
$F_{\mathrm{ad}}$ is adatom flux, and $B:=a^2\frac{\rho_0 D}{k_B T}$, with $a$, $\rho_0$, $D$, $k_B$ and $T$ being the lattice constant, the equilibrium adatom density on a step in the absence of elastic interactions, the diffusion constant on the terrace, Boltzmann constant and temperature, respectively. The crucial quantity is the chemical potential 
\begin{equation}\label{eq..mu_i}
  \mu_i=-\sum_{j\neq i}\left(\frac{\alpha_1}{x_j-x_i}-\frac{\alpha_2}{(x_j-x_i)^3}\right),
\end{equation}
where the $\alpha_1$-term is the force monopole which is attractive 
while the $\alpha_2$-term is the force dipole which is repulsive. 
Their physical origins are lattice misfit and broken bond effects.
Note that the monopole decays much more slowly in space. 
It is the key driving force for the step bunching phenomena.
If we define $E$ to be the following elastic energy of a step configuration,
\begin{equation}\label{eq..LJ02Energy}
  E=\sum_{j>i,j\in\mathbb{Z}}\sum_{i\in\mathbb{Z}}\left(\alpha_1\log|x_j-x_i|+\frac{\alpha_2}{2(x_j-x_i)^2}\right),
\end{equation}
then $\mu_i=\frac{\partial E}{\partial x_i}$
so that \eqref{eq..TersoffOriginal} can be interpreted as the gradient
flow of $E$ with respect to an appropriate metric 
on the step configuration space.

The results in \cite{Luo2016-p737-771} are roughly stated as follows.
For a system with reference system length scale of order $N$, we have 
obtained scaling laws for 
(1) the minimum energy: $ \min E\sim  N^2\log N$;
(2) minimal terrace length: $\min_i (x_{i+1}-x_i)\sim N^{-1/2}$; and
(3) bunch width (system size): $x_N-x_1\sim N^{1/2}$.
The asymptotics are valid in the limit $N\rightarrow\infty$. 
They demonstrate the appearance of bunching phenomena and describe 
quantitatively the shape of the step bunches
(cf. Fig. \ref{figure..step}(b)). 

A natural question to ask is: what is special about the pairwise interactions 
in the epitaxial growth model \eqref{eq..LJ02Energy}? 
It seems that the interaction between steps in Eq. \eqref{eq..mu_i} is similar 
to the force between the classical Lennard--Jones (LJ) $(6,12)$ interaction. 
Will the step-bunching-like phenomenon appear in a particle system governed by the well-known LJ $(6,12)$ potential? If not, what is the difference between the two cases? Another interesting question is whether the step-bunching phenomenon depends on the interaction range. These are relevant questions since in numerical simulations we often truncate and regularize the LJ potential or other classical multi-particle interactions. The goal of this paper is to consider in general the competition between the attraction 
and repulsion effects and the interaction range and investigate
how they determine the final pattern formation.

To be more precise, in the present work, we study a generalized step model 
in one dimension with pair potential given by
\begin{equation}
  V(x)=\left\{
  \begin{array}{ll}
      -\frac{\alpha_1}{m}|x|^{-m}+\frac{\alpha_2}{n}|x|^{-n}, & -1<m<n,\, m\neq 0,\, n\neq 0, \\
      \alpha_1\log|x|+\frac{\alpha_2}{n}|x|^{-n}, & 0=m<n, \\
      -\frac{\alpha_1}{m}|x|^{-m}-\alpha_2\log|x|, & -1<m<n=0,
  \end{array}
  \right.\label{eq..PairPotential.mn}
\end{equation}
where $m$ and $n$ are exponents for the interaction strength. Indeed, this is 
the simplest but still informative model which incorporates both attractive and repulsive interactions. In essence, $m$ and $n$ characterize both the 
singularity and the decaying rate of the pair interaction potential
between the steps. 
The condition $m<n$ is to guarantee that the function
$V(\cdot)$ is single-welled, i.e., it has only one global minimum
while the restriction $-1 < m$  is to make sure that the force
$V'(\cdot)$ goes to zero as $x \rightarrow \infty$. 
More precisely, for $x>0$ and $-1 < m < n$, we have
\begin{eqnarray*}
V'(x) = \frac{\alpha_1}{x^{m+1}} - \frac{\alpha_2}{x^{n+1}}
\,\,\,\,\,\text{and}\,\,\,\,\,\,
V''(x)= -\frac{\alpha_1(m+1)}{x^{m+2}} + \frac{\alpha_2(n+1)}{x^{n+2}}
\end{eqnarray*}
so that there is only one critical point $l_*$ and
inflection point $l_{**}$ of $V$:
\[
l_* = \left(\frac{\alpha_2}{\alpha_1}\right)^{\frac{1}{n-m}}
\,\,\,\,\,\text{and}\,\,\,\,\,\,
l_{**} = \left(\frac{\alpha_2(n+1)}{\alpha_1(m+1)}\right)^{\frac{1}{n-m}}.
\]
(See also Fig. \ref{figure..pair.potential} for some illustration.)

For a finite system with $N$ steps, we also want to consider the effective 
range of interaction between the steps. 
Such a consideration can be modeled by an energy functional of the following form:
\begin{equation}\label{eq..Energy.mnalphaDefinition}
  E[Y_N]=\sum\limits_{1\leq i<j\leq N,\,\,j-i\leq \lfloor N^{\alpha}\rfloor}V(y_j-y_i),
\end{equation}
where $Y_N=(y_1,\cdots,y_N)^T$ with $y_i < y_j$ for $i < j$.
Note that steps $i$ and $j$ interact if $|i-j|\leq N^\alpha$.
We call the above {\em generalized LJ $(m,n)$ model}.

\begin{figure}
  \begin{center}
  (a)\subfigure{\includegraphics[width=0.45\textwidth]{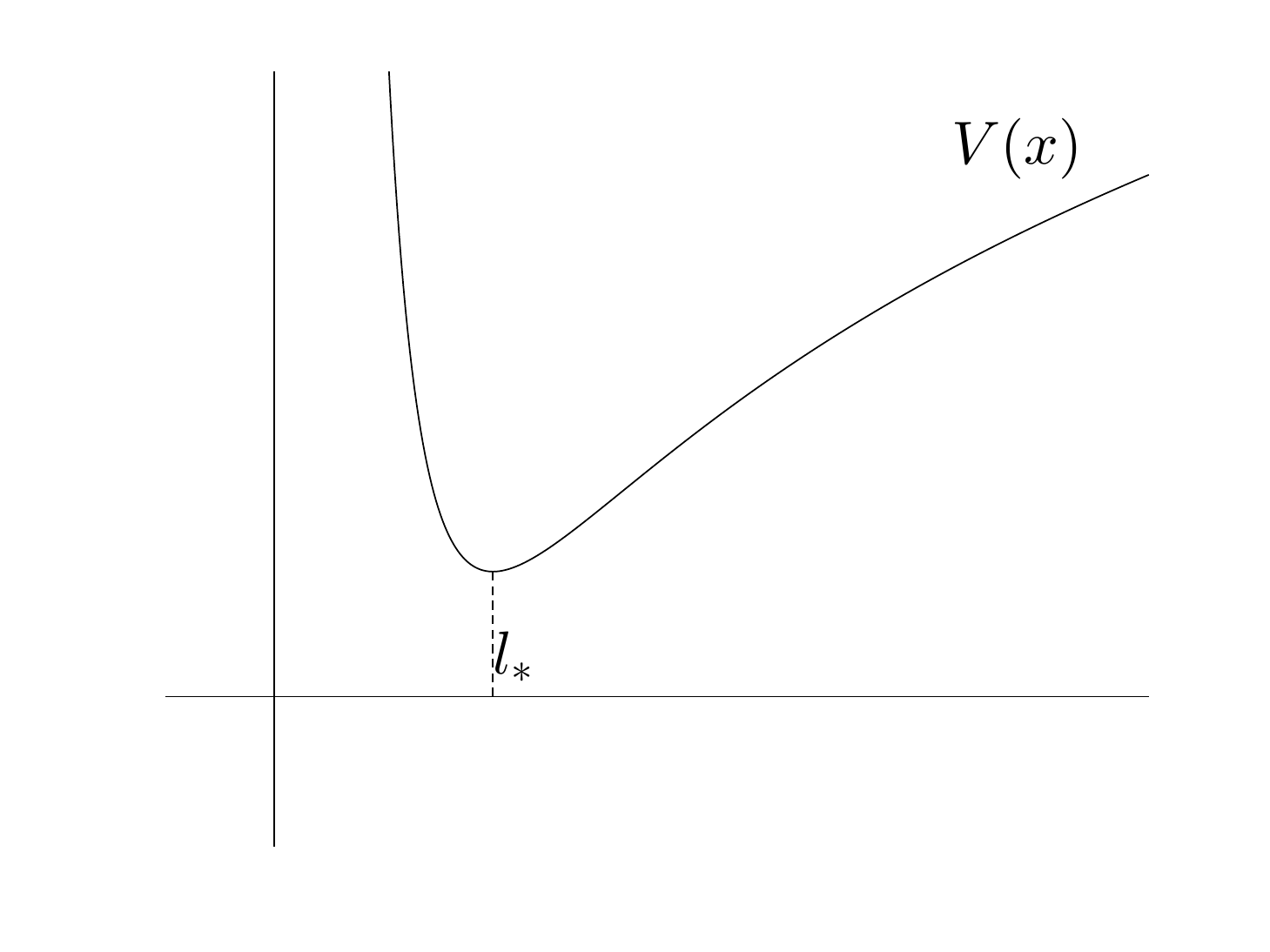}}
  (b)\subfigure{\includegraphics[width=0.45\textwidth]{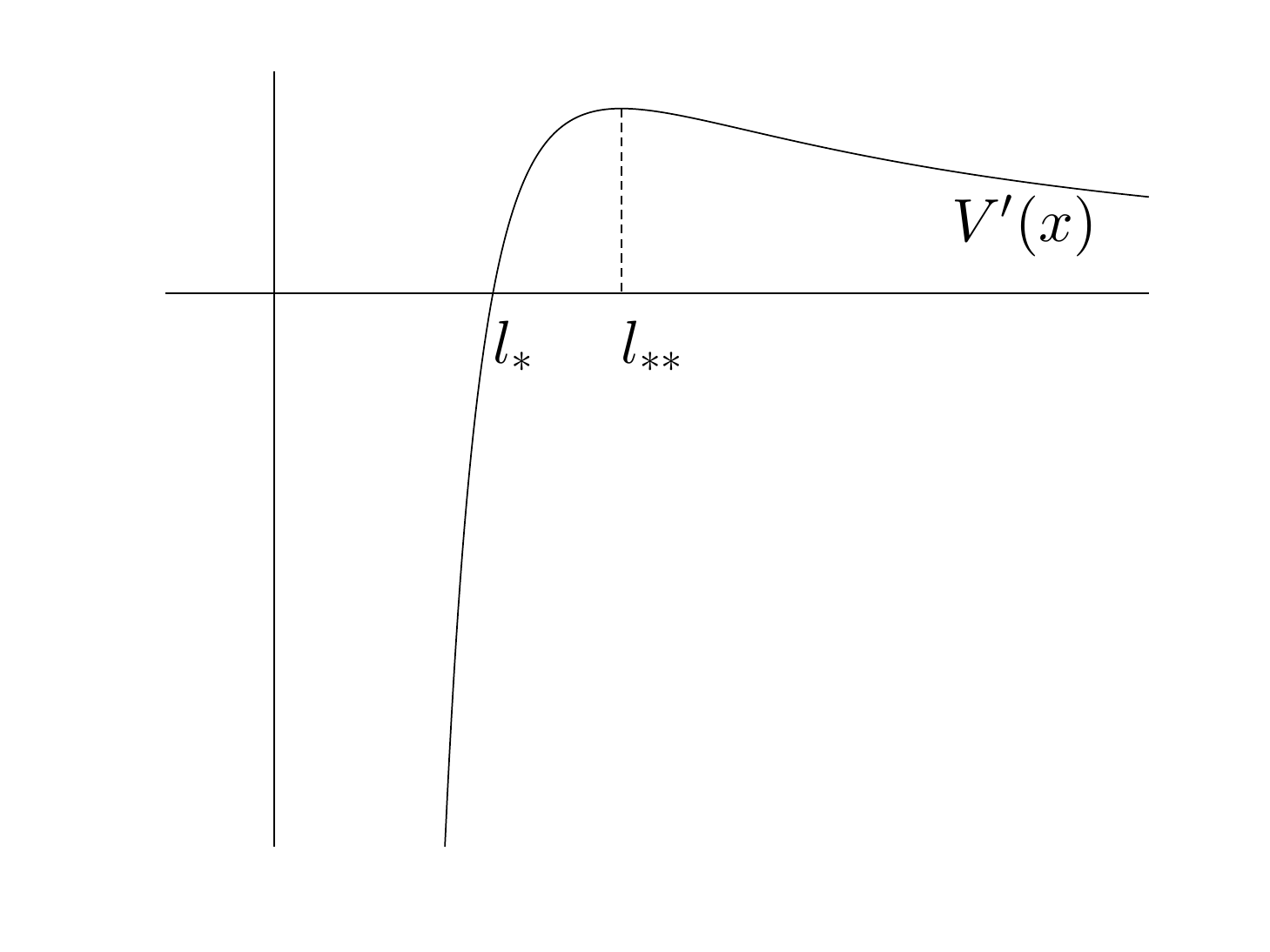}}
  (c)\subfigure{\includegraphics[width=0.45\textwidth]{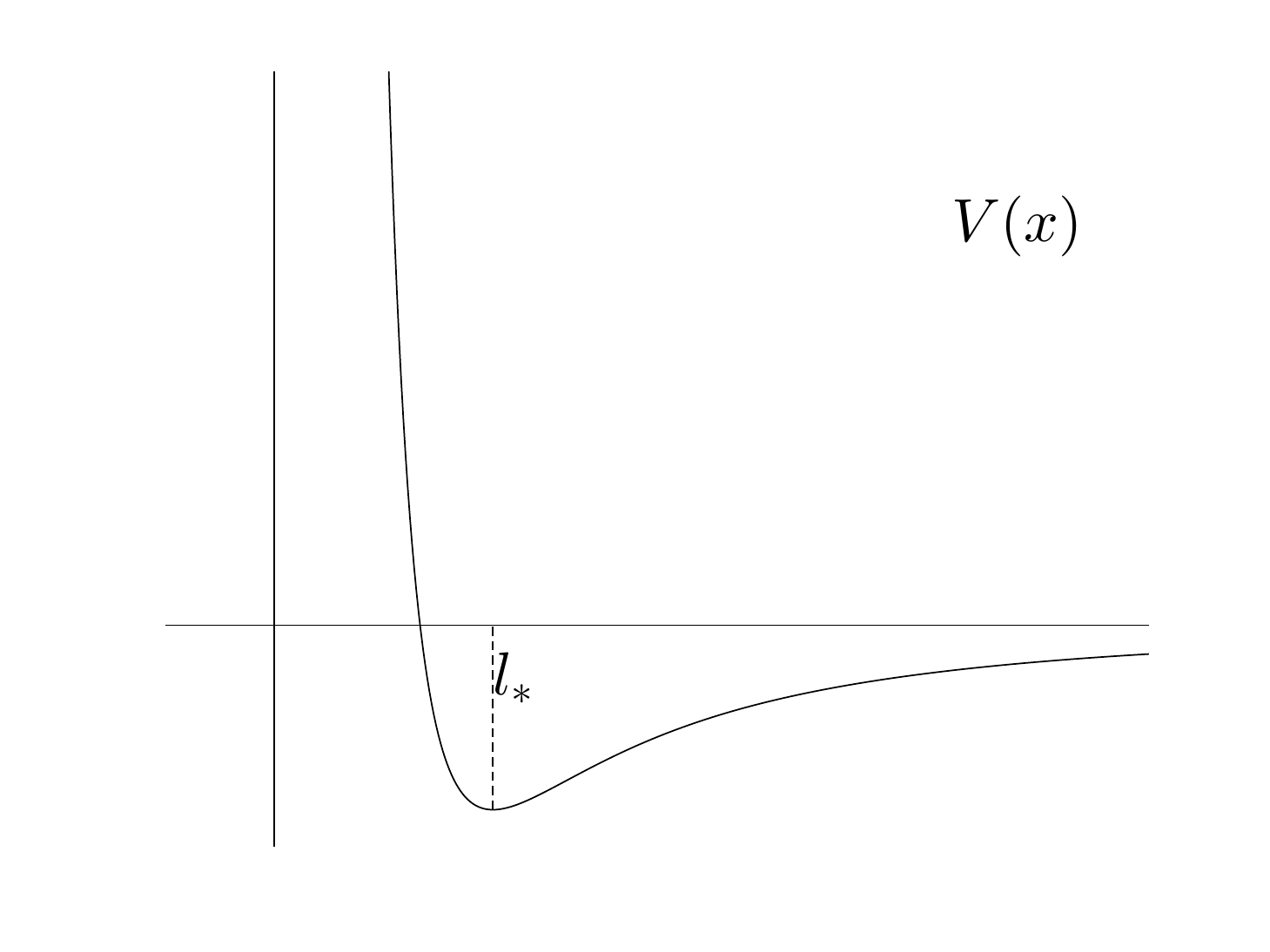}}
  (d)\subfigure{\includegraphics[width=0.45\textwidth]{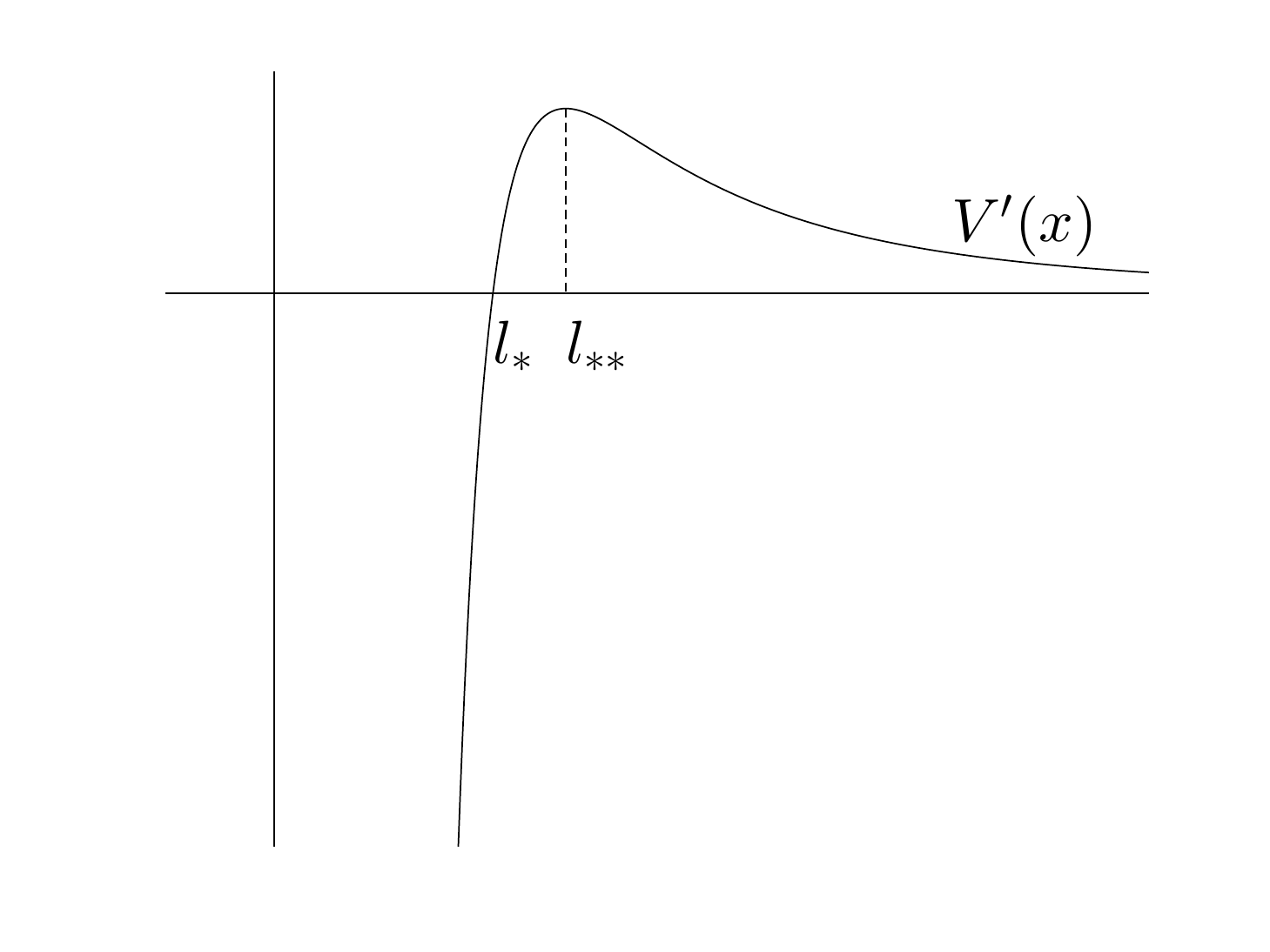}}
  \end{center}
  \caption{Pair potential $V(x)$ and its derivative $V'(x)$ in different 
regimes. (a) and (b): $-1<m<0$ and $1<n$; 
(c) and (d): $0<m<n$. 
Note that for $m< 0$, $V(x)$ grows to infinity as $x\rightarrow\infty$
while for $0< m$, $V(x)$ decays to zero as $x\rightarrow\infty$.}
\label{figure..pair.potential}
\end{figure}

The present paper considers the minimization problem of the energy
\eqref{eq..Energy.mnalphaDefinition},
i.e., to find an $X_N=(x_1,\cdots,x_N)^T$ with $x_1<x_2<\cdots<x_N$ such that
\begin{equation} 
E[X_N]=\inf_{y_1<y_2<\cdots<y_N}E[Y_N].\label{eq..EnergyMinimizationProblem}
\end{equation}
We still call the functional \eqref{eq..Energy.mnalphaDefinition}
an ``epitaxial growth model''
so as to be consistent with the terminology of the previous work 
\cite{Luo2016-p737-771} where the interaction is the LJ $(0,2)$ potential. Note that if $\alpha=0$, then the interaction is 
{\em nearest-neighbor} which will be shown not to have 
any bunching instability. 
On the other hand, if $\alpha>0$, then the interaction is nonlocal. In particular, $\alpha=1$ corresponds to fully nonlocal interaction. For the nonlocal case ($\alpha>0$), we will show that bunching instability takes place if $-1<m<1<n$.

From physical experience, in an LJ $(6,12)$ system, the system size $x_N-x_1$ grows linearly in $N$. In fact, this is related to the crystallization problem which asks whether under appropriate conditions, 
the perfect {\em periodic} lattice 
configuration is the minimizer as $N$ tends to infinity.
In one dimension, especially for LJ type potential, the crystallization problem is completely understood \cite{Blanc2015-p-}. In \cite{Ventevogel1978-p343-361}, Ventevogel proved that the lattice structure gives the minimum energy 
for the LJ $(m,n)$ potential (Eq. \eqref{eq..Energy.mnalphaDefinition}) with $1<m<n$. 

As far as we know, all the results on the crystallization problem exclude 
the case $m<1$ since it is not physically relevant for the models considered
in those works. 
However, it seems that the regime $m<1$ is where step-bunching takes place. 
One of our main results of this paper is the demonstration of bunching 
phenomenon for $-1<m<1<n$ but non-bunching for $1<m<n$.
This is carried out by means of a unified approach. 
To the best of our knowledge, together with \cite{Luo2016-p737-771}, our works are the first to give a quantitative description of the bunching phenomena.

These questions are by no means trivial in terms of rigorous analysis or even intuitively speaking. 
The difficulties come from the long-range interaction and the discreteness of 
the model. On a technical level, we need to deal with the double summation
in the formulation \eqref{eq..Energy.mnalphaDefinition}. 
We will improve the idea and techniques used in \cite{Luo2016-p737-771} to 
estimate the force exerted by a step chain which is essentially a first
order information. 
This leads to a sharp lower bound for the terrace length. 
All the other estimates are based on this lower bound. Interestingly, some
of our results are applicable even for general critical points.

\section{Main Results}

As mentioned before, this work focuses on the investigation of the step 
bunching phenomenon in the fully nonlinear setting. 
In this case, it is more convenient to analyze the step 
bunching phenomena with Neumann boundary condition which is the natural 
boundary condition for finitely many ($N$) steps.
(This can be extended to the periodic setting by the concentration-in-half-period technique as in \cite[Theorem 3(b)]{Luo2016-p737-771}.)

We consider here the nonlinear energy minimization problem 
\eqref{eq..EnergyMinimizationProblem} and 
study the properties of its minimizers. 
Without loss of generality, we assume $\alpha_1=\alpha_2=1$ in the rest of 
the paper. We remark that $m, n$ can be non-integers.
The existence of a minimizer follows from the continuity of $V$ and 
its behavior as $x$ approaching $0$ or $\infty$. The proof is omitted here 
as it is very similar to \cite[Theorem 1(b)]{Luo2016-p737-771}. 
In this work, we will estimate the {\em minimum energy}
\begin{equation}\label{minE}
  E_N:=\min_{y_1<y_2<\cdots<y_N}E[Y_N]
\end{equation}
and investigate the asymptotic behavior of minimizers or even critical points 
$X_N$ and their dependence on $m$, $n$, and $\alpha$ as $N\rightarrow+\infty$. 

Here we make a remark about the notation. 
For simplicity, $X_N$ can refer to a general
step configuration, a minimizer, or a critical point
and its energy is denoted by $E[X_N]$.
The meaning of $X_N$ will be clear or specified in the context it appears. 
However, $E_N$ will always refer to the minimum energy as 
defined in \eqref{minE}.

In order to give precise statements, we first introduce the following
quantities.
\begin{defi}[minimal terrace length $\lambda_N$ and system size $w_N$]
For any step configuration $X_N=(x_1,\cdots,x_N)^T$ with $ x_1<\cdots< x_{N}$,
we define
  \begin{align}
    \lambda_N
    &:= \min_{1\leq i\leq N-1}\{x_{i+1}-x_i\},\\
    w_N
    &:= x_N-x_1.
  \end{align}
\end{defi}

Next we define the bunching phenomenon as follows. 
\begin{defi}[Bunching/Non-bunching regime]
  We say a sequence of configurations $\{X_N\}_{N=1}^{\infty}$ is a bunching (respectively non-bunching) sequence, if
  \begin{equation}
      \limsup_{N\rightarrow+\infty}\frac{w_N}{N}=0\,\,\,
      \left(\text{respectively}\,\,\liminf_{N\rightarrow+\infty}\frac{w_N}{N}>0\right).
  \end{equation}
  We say that the system with parameters $m,n$, and $\alpha$ is in the bunching (respectively non-bunching) regime if any sequence of energy minimizers $\{X_N\}_{N=1}^{\infty}$ is a bunching (respectively non-bunching) sequence.
\end{defi}

For convenience, we state the following condition which will be used
very often in the paper.
A step configuration $X_N=(x_1,\cdots,x_N)^T$ with $ x_1<\cdots< x_{N}$
is said to satisfy the {\em force balance} if 
$\displaystyle 
\frac{\partial E}{\partial x_i} = 0$, for all $i=1,\cdots,N$.
Expressed in terms of the length variables, this condition becomes
\begin{equation}\label{ForceBalanceCond}
    \sum\limits_{j\leq i\leq k, k-j\leq \lfloor N^{\alpha}\rfloor-1}V'(l_j+\cdots+l_i+\cdots+l_k)=0.
\end{equation}
\noindent
The above is clearly the same as the vanishing of the first variation of 
the energy $E$ at any minimizer or critical point.

\begin{rmk}\hspace{5pt}
\begin{enumerate}
\item
Note that as critical points or even minimizers 
might not be unique, the quantities $\lambda_N$ and $w_N$ in general will 
depend on the particular step configuration $X_N$. 
However, the main point of our
results is that their dependence on $N$ are all asymptotically the same.

\item
Our definition of bunching v.s. non-bunching is somewhat different from the 
terminology used in the literature where people often consider kinetic effects 
in the bunching phenomenon. They sometimes say that bunching occurs if
a large number of steps concentrate in a region which is much narrower
compared with the initial configuration \cite{Krasteva2016-p220015-220015}. 
On the other hand, we focus on whether the average terrace length is asymptotically zero or not as $N\rightarrow\infty$. 
A non-bunching sequence, in our sense $w_N/N\rightarrow l_\infty>0$, may be regarded as a bunching sequence in the literature if 
$l_\infty\ll l_0$ where $l_0$ is the average initial terrace length.
\end{enumerate}
\end{rmk}

By means of an appropriate ansatz, 
we can obtain a heuristic 
scaling law for the minimum energy (which is in fact a rigorous 
{\em upper bound} for the energy) 
and also the underlying shape of the
step bunch. This is performed in Section \ref{sec..heuristic.scaling.laws}.
The results, illustrated in the phase diagram Fig. \ref{figure..PhaseDiagram},
can already reveal two interesting physical regimes 
which we will concentrate on in this paper.
One is $-1<m<1<n$ corresponding to the potential appeared in the epitaxial growth model \cite{Tersoff1995-p2730-2733}. The other is $1<m<n$ corresponding to the classical LJ type potential. The bunching phenomenon appears in the former but not the latter.
The main purpose of this paper is to give a quantitative description of 
the two regimes and the transition between them.

\begin{thm}[bunching regime]\label{thm..Bunching}
  Let $-1<m<1<n$ and $0<\alpha\leq 1$.
  Then there exist positive constants $C$, $C'$, $\beta$, and $N_0$ such that, for any $N>N_0$ and minimizer $X_N$ of the energy functional 
  \eqref{eq..Energy.mnalphaDefinition}, the following hold.

  (A) energy scaling law
{\small
  \begin{align*}
    CN^{1+\frac{(1-m)n\alpha}{n-m}}
    &\leq 
    E_N \leq C' N^{1+\frac{(1-m)n\alpha}{n-m}},\quad -1<m<0,\\
    \textstyle \frac{(n-1)\alpha}{n} N^{1+\alpha}\log N-C N^{1+\alpha}\log\log N
    &\leq 
    E_N \leq\frac{(n-1)\alpha}{n}N^{1+\alpha}\log N+C'N^{1+\alpha},\quad m=0,\alpha<1;\\
    \textstyle \frac{n-1}{2n} N^2\log N-C N^2\log\log N
    &\leq 
    E_N \leq \frac{n-1}{2n}N^2\log N+C'N^2,\quad m=0,\alpha=1,\\
-CN^{1+\frac{(1-m)n\alpha}{n-m}}
    &\leq 
E_N \leq -C' N^{1+\frac{(1-m)n\alpha}{n-m}},\quad 0<m<1.
  \end{align*}
}

  (B) minimal terrace length
  \begin{align}
    CN^{-\frac{(1-m)\alpha}{n-m}}
    &\leq \lambda_N
    \leq C' N^{-\frac{(1-m)\alpha}{n-m}},\quad m\neq 0,\\
    CN^{-\frac{\alpha}{n}}\left(\log N\right)^{-\frac{1}{n}}
    &\leq \lambda_N
    \leq C' N^{-\frac{\alpha}{n}}, \quad m=0;
  \end{align}

  (C) system size
  \begin{align}
    CN^{1-\frac{(1-m)\alpha}{n-m}}
    &\leq w_N \leq C' N^{1-\beta},\quad m\neq 0,\\
    CN^{1-\frac{\alpha}{n}}\left(\log N\right)^{-\frac{1}{n}}
    &\leq w_N \leq C' N^{1-\beta},\quad m=0.
  \end{align}
  In particular, we have $\lambda_N\ll 1$ and the system is in the 
  bunching regime.
\end{thm}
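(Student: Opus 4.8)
The plan is to take the terrace-length lower bound as the technical heart of the argument and to derive everything else from it, while the energy upper bounds in (A) are supplied by the ansatz of Section~\ref{sec..heuristic.scaling.laws}. The organizing tool is a \emph{cut identity}: summing the force-balance equations \eqref{ForceBalanceCond} over $i=1,\dots,p$ makes all internal forces cancel and leaves, for every cut $p$,
\[
\sum_{a\le p< b,\; b-a\le \lfloor N^\alpha\rfloor} V'(x_b-x_a)=0 ,
\]
which holds at any critical point, hence at any minimizer. Since $V'(x)=x^{-m-1}-x^{-n-1}$ is negative (repulsive) for $x<l_*=1$ and positive (attractive) for $x>1$, this says that across each cut the total repulsion exactly balances the total attraction.

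First I would prove the key estimate, the lower bound $\lambda_N\ge cN^{-(1-m)\alpha/(n-m)}$ in (B). Let the minimal terrace be $l_{i_0}=\lambda_N$ and apply the cut identity at $p=i_0$. For $N$ large we have $\lambda_N<1$, so the nearest crossing pair $(i_0,i_0+1)$ contributes $V'(\lambda_N)\approx-\lambda_N^{-n-1}$, a large repulsion, hence the total repulsion is at least of order $\lambda_N^{-n-1}$. Parametrizing the crossing pairs by $a=i_0-r,\ b=i_0+q$ with $r\ge0,\ q\ge1,\ r+q\le\lfloor N^\alpha\rfloor$ and using $x_b-x_a\ge (r+q)\lambda_N$, the total attraction is bounded by $\sum_{s=1}^{\lfloor N^\alpha\rfloor} s\,(s\lambda_N)^{-m-1}\lesssim \lambda_N^{-m-1}N^{\alpha(1-m)}$, since there are $s$ pairs at separation $s$ and $\sum_{s\le N^\alpha}s^{-m}\sim N^{\alpha(1-m)}/(1-m)$ for $m<1$. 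Balancing repulsion against attraction gives $\lambda_N^{-n-1}\lesssim \lambda_N^{-m-1}N^{\alpha(1-m)}$, i.e. $\lambda_N\gtrsim N^{-(1-m)\alpha/(n-m)}$. This estimate uses only \eqref{ForceBalanceCond}, so it holds at any critical point, and it immediately yields the lower bound on $w_N$ in (C) via $w_N\ge(N-1)\lambda_N$.

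The remaining bounds follow from elementary monotone estimates of the energy sum combined with the above. For the energy lower bounds in (A) I bound $V(x)\ge \tfrac1{|m|}x^{|m|}$ when $m<0$ (resp. $V(x)\ge-\tfrac1m x^{-m}$ when $0<m<1$), use $x_b-x_a\ge s\lambda_N$ for a pair at separation $s$, and sum over all pairs; inserting $\lambda_N\gtrsim N^{-(1-m)\alpha/(n-m)}$ produces the exponent $1+(1-m)n\alpha/(n-m)$ with the correct sign, positive for $m<0$ and negative for $0<m<1$. For the upper bound on $\lambda_N$ in (B), the same monotone energy-from-below inequality, read together with the ansatz upper bound $E_N\le C'N^{1+(1-m)n\alpha/(n-m)}$ of (A), forces $\lambda_N\le C'N^{-(1-m)\alpha/(n-m)}$: an over-large minimal terrace would make the confining part ($m<0$) too large, or the attractive part too small in magnitude ($0<m<1$), to be consistent with the ansatz energy.

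Finally, the upper bound $w_N\le C'N^{1-\beta}$, which only needs some $\beta>0$ and establishes bunching, is obtained by ruling out a spread-out minimizer: comparing $E[X_N]$ against the ansatz value and using a size-dependent lower bound on the energy shows the configuration must stay compact. The case $m=0$ is handled in parallel, the only difference being that the attractive sums are logarithmic, which is the origin of the $\log N$ factor in (A) and the $(\log N)^{-1/n}$ corrections in (B)--(C). The main obstacle throughout is the key estimate of the second paragraph: controlling the cumulative attractive force of a chain of up to $N^\alpha$ steps and matching its exact order against the nearest-neighbor repulsion is precisely where the non-locality of the double sum must be confronted, and the sharp constant $1/(1-m)$ coming from $\sum_{s\le N^\alpha}s^{-m}$ is what makes the resulting terrace-length exponent sharp.
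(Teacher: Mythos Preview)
Your plan for parts (A), (B), and the lower bound in (C) is correct and in fact a bit cleaner than the paper's. The ``cut identity'' you derive is exactly the paper's force-balance condition \eqref{ForceBalanceCond} written in terrace variables. The difference lies in how the attractive sum is bounded: the paper packages the pairs into chains $F_i,G_j$ and invokes a separate ``chain--force'' estimate (Lemma~\ref{lem..step.chain.force.estimate}) obtained by integrating $V'$, whereas you go directly through $V'(x)\le x^{-m-1}$ and $x_b-x_a\ge s\lambda_N$ to get $\sum_s s\,(s\lambda_N)^{-m-1}\lesssim \lambda_N^{-m-1}N^{\alpha(1-m)}$. Your route is more elementary and, for $m=0$, actually removes the paper's extraneous $(\log N)^{-1/n}$ loss in the terrace lower bound (and hence the $\log\log N$ in the energy lower bound). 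The subsequent steps---energy lower bounds via $V(x)\ge -\tfrac1m x^{-m}$ together with $x_b-x_a\ge s\lambda_N$, and the $\lambda_N$ upper bound from comparing these against the ansatz energy---mirror exactly what the paper does in the claim~\eqref{eq..energy.lower.bound} and the surrounding discussion.

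The gap is in the upper bound of (C). ``Comparing $E[X_N]$ against the ansatz and using a size-dependent lower bound'' does yield a usable estimate when $\alpha=1$ and $-1<m\le0$, because then every pair interacts and the growing term $\frac{1}{|m|}x^{|m|}$ (resp.\ $\log x$) penalises large separations; this is precisely what the paper proves in Proposition~\ref{thm..size.system.bunching.alpha=1.further.result}. But it does \emph{not} work in general. For $0<m<1$ the potential tends to $0$ at infinity, so a spread-out tail costs almost nothing in energy and cannot be ruled out by comparison with the ansatz alone. For $\alpha<1$ the interaction range is only $N^\alpha\ll N$, so even for $m<0$ there is no pair that ``sees'' the full width $w_N$, and your size-dependent lower bound evaporates. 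The paper's proof of the $w_N\le CN^{1-\beta}$ bound is accordingly a different kind of argument: it partitions the bunch into intervals $K_\tau$ of fixed length $l_{**}$, classifies each as terrace-dense or terrace-sparse, and on sparse intervals uses the force-balance condition (not the energy) to show each terrace there has length $\le CN^{-\delta/(3(n+1))}$. This covering/force argument is what produces $\beta>0$ for \emph{all} $-1<m<1<n$ and $0<\alpha\le1$, and your outline does not supply a substitute for it.
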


\begin{thm}[non-bunching regime]\label{thm..Non-bunching}
  Suppose that either 
(i) $1<m<n$, $0 < \alpha \leq 1$ or 
(ii) $-1<m<n$, $\alpha=0$.
  There exist positive constants $C$, $C'$, and $N_0$ such that, for any $N>N_0$ and minimizer $X_N$ of the energy functional 
  \eqref{eq..Energy.mnalphaDefinition}, the following hold.

  (A) energy scaling law
  \begin{equation}
    \left\{
\begin{array}{lc}
\text{case (i):} & -CN\leq E_N\leq -C'N; \\
\text{case (ii):} & |E_N|\leq CN; 
\end{array}
\right.
  \end{equation}

  (B) minimal terrace length
  \begin{equation}
    C\leq \lambda_N \leq C';
  \end{equation}

  (C) system size
  \begin{equation}
    CN\leq w_N \leq C'N.
  \end{equation}
  In particular, we have $\lambda_N=O(1)$ and the system is in the non-bunching regime.
\end{thm}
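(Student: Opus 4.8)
I would organize the whole argument around two pointwise bounds on the terrace lengths of a critical configuration, from which (A)--(C) all follow. Case (ii) is essentially immediate: when $\alpha=0$ the range is $\lfloor N^0\rfloor=1$, so $E[Y_N]=\sum_{i=1}^{N-1}V(l_i)$ decouples into single-terrace terms, each minimized at the unique well $l_*=(\alpha_2/\alpha_1)^{1/(n-m)}=1$. Hence the minimizer has all $l_i=1$, giving $\lambda_N=w_N/(N-1)=1$ and $E_N=(N-1)V(1)$, which yields (A)--(C) with explicit constants for every $-1<m<n$. The remainder of the plan concerns case (i), $1<m<n$ and $0<\alpha\le 1$, where one must control the double sum, and throughout I use that a minimizer is an interior critical point, so the force balance \eqref{ForceBalanceCond} holds.

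The first ingredient is an upper bound on terraces, valid at any critical point and in every regime. Fix a critical configuration and any index $i_0$, and cut the chain between $x_{i_0}$ and $x_{i_0+1}$. Summing \eqref{ForceBalanceCond} over the steps of the right block and using translation invariance of $E$ (so that the internal forces cancel pairwise), one is left with the net cross-cut force $F:=\sum_{i\le i_0<p,\,p-i\le \lfloor N^\alpha\rfloor}V'(x_p-x_i)=0$. Every crossing distance $x_p-x_i=\sum_{s=i}^{p-1}l_s$ contains the terrace $l_{i_0}$, hence $x_p-x_i\ge l_{i_0}$; if $l_{i_0}>l_*=1$ then $V'(\cdot)>0$ on all crossing pairs (as $V'>0$ on $(l_*,\infty)$), forcing $F>0$, a contradiction. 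Therefore $l_{i_0}\le 1$ for every $i_0$, so $\lambda_N\le 1$ and $w_N=\sum_i l_i\le N-1$. As the paper notes, this needs only criticality.

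The crux is the matching lower bound $\lambda_N\ge C$, which is where $m>1$ enters. Let $l_{i_0}=\lambda_N$ be a minimal terrace (assume $\lambda_N<1$, else nothing to prove) and apply $F=0$ at this cut. Isolating the nearest-neighbour crossing term gives $\sum_{\mathrm{other\ cross}}V'(x_p-x_i)=-V'(\lambda_N)=\lambda_N^{-n-1}-\lambda_N^{-m-1}$. I would bound the left-hand side from above by its positive part, using $(V')^+(r)\le r^{-m-1}$ for all $r>0$ together with $x_p-x_i\ge(p-i)\lambda_N$ and the fact that for each index-gap $d$ at most $d$ crossing pairs occur; this yields $\sum_{\mathrm{other\ cross}}V'(x_p-x_i)\le \lambda_N^{-m-1}\sum_{d\ge 2}d^{-m}\le \lambda_N^{-m-1}\,\zeta(m)$, the sum being finite precisely because $m>1$. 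Comparing the two displays gives $\lambda_N^{-n-1}\le \zeta(m)\,\lambda_N^{-m-1}$, i.e. $\lambda_N\ge \zeta(m)^{-1/(n-m)}=:C>0$. This is the sharp first-order estimate on which everything else rests; I expect the careful bookkeeping of the crossing pairs (ensuring the $\lambda_N^{-n-1}$ repulsion genuinely dominates the summable $\lambda_N^{-m-1}$ attraction) to be the main obstacle, being exactly the step that fails for $m\le 1$ and opens the door to bunching.

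With $C\le\lambda_N\le 1$ established, (B) is complete and (C) follows from $(N-1)C\le(N-1)\lambda_N\le w_N\le N-1$. For the energy, the upper bound $E_N\le -C'N$ comes from testing with the uniform lattice $x_i=i\bar l$ at the optimal spacing $\bar l=(\zeta(n)/\zeta(m))^{1/(n-m)}$: its energy per particle equals $\sum_{d\ge 1}V(d\bar l)=\zeta(m)\,\bar l^{-m}(\tfrac1n-\tfrac1m)<0$, while the truncation to range $N^\alpha$ and the boundary correction are $o(N)$ since $m>1$ and $\alpha\le 1$. The matching lower bound $E_N\ge -CN$ follows by grouping pairs by index-gap $d$: using $\lambda_N\ge C$ one has $x_p-x_i\ge dC$, so each gap-$d$ contribution is at least $-\tfrac1m(dC)^{-m}$ times at most $N$ pairs, and $\sum_d d^{-m}=\zeta(m)<\infty$ caps the total at $O(N)$, the finitely many gaps with $dC<1$ being controlled by the absolute bound $V\ge V(1)$. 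This gives (A) and, in particular, the non-bunching conclusion $w_N/N\ge C>0$.
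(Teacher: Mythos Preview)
Your proposal is correct. The treatment of case~(ii) and of the upper bound $l_i\le l_*$ coincides with the paper's (Proposition~\ref{prop..upper.bound.terrace.length} is exactly your cut argument, since the paper's force balance \eqref{ForceBalanceCond} \emph{is} already the cross-cut force $\partial E/\partial l_i$). The energy bounds in~(A) are also handled in essentially the same spirit as the paper (grouping by index-gap, using $\lambda_N\ge C$ and $\sum d^{-m}<\infty$).

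The one genuine methodological difference is your proof of the lower bound $\lambda_N\ge C$ in case~(i). You bound the non-nearest crossing forces directly by
\[
\sum_{\mathrm{other\ cross}}V'(x_p-x_i)\ \le\ \sum_{d\ge 2}\,d\cdot\bigl((d\lambda_N)^{-m-1}\bigr)\ =\ \lambda_N^{-m-1}\sum_{d\ge 2}d^{-m},
\]
using $(V')^+(r)\le r^{-m-1}$, $x_p-x_i\ge (p-i)\lambda_N$, and the crucial fact that $\sum d^{-m}<\infty$ precisely when $m>1$. This is more elementary than what the paper does: there the authors reuse the machinery built for the bunching regime (Lemma~\ref{lem..step.chain.force.estimate} and the splitting into $F_i$, $\sum_{j\ge j_0}G_j$, $\sum_{j<j_0}G_j$ with the antiderivative $W$), ultimately reaching $\lambda_N^{-n-1}\le C\lambda_N^{-m-1}+C\lambda_N^{-2}$. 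Your argument is shorter and gives an explicit constant $\zeta(m)^{-1/(n-m)}$, but it is specific to $m>1$; the paper's route, while heavier, is the one that extends uniformly to all the $m$-regimes treated in Theorem~\ref{thm..Bunching}. Either way your conclusion $\lambda_N^{-n-1}\le \zeta(m)\lambda_N^{-m-1}$ is valid and suffices.
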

\noindent
We remark that that the various constants $C$ and $C'$ in
the above may be 
different in different parts of the statements. In general, they can 
depend on $n, m$, and $\alpha$, but not on the system size $N$.


\begin{rmk} \hspace{5pt}
\begin{enumerate}
\item We note again that the epitaxial growth model ($m=0$ and $n=2$) 
belongs to the bunching regime (Theorem \ref{thm..Bunching}) while
the classical Lennard--Jones model ($m=6$ and $n=12$) belongs to the
non-bunching regime (Theorem \ref{thm..Non-bunching}).

\item
All of the statements in Theorem \ref{thm..Bunching}
involve an exponent with value $\frac{1-m}{n-m}$
which is a decreasing function of $m$ and $n$ in the region $-1 < m < 1 < n$. 
This leads to that the scaling for the minimum terrace length $\lambda_N$
is an increasing function of $m$. In a sense, the bunching phenomena is 
``weakened''. This is also revealed in the
dependence of $\lambda_N$ on $\alpha$: a bigger value of $\alpha$, i.e.
more nonlocal interaction, causes
a more prominent bunching effect. The case $m=0$ is critical in all 
of the above quantitative results.

\item
  The parameter ranges for $m$ and $n$ in Theorem \ref{thm..Bunching} 
are included in case (ii) of Theorem \ref{thm..Non-bunching}. 
The difference appears in their values of $\alpha$.
In particular, $\alpha=0$ 
corresponds to finite-range interaction (and in fact nearest neighbor 
as $N^\alpha = 1$)
which does not lead to any bunching phenomenon.

\item
The upper and lower bounds in part (C) of Theorem \ref{thm..Bunching} do
not match in general. But we can still provide some partial results 
where the bounds do match (see Proposition \ref{thm..size.system.bunching.alpha=1.further.result} for the case of $-1<m\leq 0,\,\,1<n$, and $\alpha=1$). 
In any case, the positive $\beta$ in part (C) of Theorem 
\ref{thm..Bunching} indicates that the system is in the bunching regime.

\item
Our intuition leads us to believe that the bunch shape is
roughly linear so that $w_N \sim N\lambda_N$. 
This is related to the fact that the optimal energy scaling 
is the same as that given by the uniform step train ansatz 
(see Section \ref{sec..heuristic.scaling.laws}).
Such a linear shape is rigorously proved in 
\cite[Theorems 4 and 5]{Luo2016-p737-771} for the 
$(m=0,n=2)$ elasticity model.
Our current work extends this description to our generalized LJ $(m,n)$ model
to the regime $-1 < m < 0$, $1< n$ 
and $\alpha = 1$ (see Proposition 
\ref{thm..size.system.bunching.alpha=1.further.result}).
In the non-bunching case, such a statement is very much 
related to the well-known crystallization conjecture --- 
ground states are believed to be periodic in infinite extent.
In the one dimensional case, this is solved in several works 
\cite{Ventevogel1979-p274-288,Ventevogel1979-p569-580,Ventevogel1978-p343-361}.

\item 
For technical reasons, for the parameter regime 
$-1<m<n<1$, 
we are only able to provide an upper bound for the minimum energy $E_N$.
Heuristically, the result says that the system is in the bunching
regime, and in fact, of the ``most severe'' type.
See the statement and discussion in Section 
\ref{sec..heuristic.scaling.laws}, in particular,
the region A in Fig. \ref{figure..PhaseDiagram}.

\item
Note that each of the exponents $m$ and $n$ determines simultaneously 
the behaviors of the potential $V$ for $x\ll1$ and $x\gg 1$.
We believe that Theorems \ref{thm..Bunching} and \ref{thm..Non-bunching} 
also work for more general potential $V(x)$, not necessarily of Lennard--Jones type \eqref{eq..PairPotential.mn} but still
satisfying $V(x)\sim |x|^{-m}$ asymptotically for $|x|\gg 1$ 
and $V(x)\sim |x|^{-n}$ for $|x|\ll 1$.
\end{enumerate}
\end{rmk}

Here we compare our results and technique of proof with some works on the 
crystallization problem. 
Refs. \cite{Ventevogel1978-p343-361} and \cite{Gardner1979-p719-724} show the 
crystallization phenomenon for a one-dimensional system with Lennard--Jones $(m,n)$ 
interaction for $1<m<n$. 
Refs. \cite{Ventevogel1979-p274-288} and \cite{Ventevogel1979-p569-580} 
are further extensions of \cite{Ventevogel1978-p343-361}. 
All of them use {\em energetic consideration} of the energy functional $E$ 
\eqref{eq..Energy.mnalphaDefinition}.
The calculation involves 
careful rearrangement argument for the 
double summation of the pair potential $V$. 
However, some of our statements
(for example, the lower bound of minimal terrace length, 
and upper bound for the bunching size) hold for any critical point of $E$, 
not just for minimizers. To achieve these, we make use of the 
force balance condition \eqref{ForceBalanceCond}
which only involves a single summation.
These are summarized and formulated in the following 
corollaries. The
results are new and different from all the previous works.
\begin{cor}\label{thm..BunchingCriticalPoint}
  Suppose that $-1<m<1<n$, and $0<\alpha\leq 1$. Then there exist positive constants $C$, $\beta$, and $N_0$ such that for all $N>N_0$ and all 
  {\em critical point} $X_N=(x_1,\cdots,x_N)^T$ of \eqref{eq..Energy.mnalphaDefinition}, we have
  \begin{equation*}
    w_N\leq C N^{1-\beta}.
  \end{equation*}
  In particular, {\em any sequence of critical points $\{X_N\}_{N=1}^\infty$
  is a bunching sequence.}
  {\em (The estimate here are exactly the same as
  the upper bound for $w_N$ in Theorem \ref{thm..Bunching}(C).)}
\end{cor}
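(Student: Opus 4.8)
The plan is to show that the estimate rests purely on the force balance condition \eqref{ForceBalanceCond}, so that minimality is never invoked and the argument applies verbatim to any critical point; indeed this is exactly what distinguishes the corollary from Theorem \ref{thm..Bunching}(C). Concretely, I would extract two ingredients from \eqref{ForceBalanceCond}: a sharp lower bound on the minimal terrace length $\lambda_N$, and an upper bound on the largest terrace (equivalently on the total $w_N=\sum_i l_i$), and then combine them.

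First I would establish $\lambda_N\ge cN^{-(1-m)\alpha/(n-m)}$ (with a logarithmic correction at $m=0$). Let $i_0$ realize the minimum, so $l_{i_0}=\lambda_N$. In \eqref{ForceBalanceCond} at terrace $i_0$, the single-terrace chain $j=k=i_0$ contributes $V'(\lambda_N)$, whose magnitude is of order $\lambda_N^{-(n+1)}$ and which is the dominant repulsive (negative) term; it must be cancelled by the attractive (positive) terms. Since every chain straddling $i_0$ of width $w$ has length at least $w\lambda_N$ (all terraces are $\ge\lambda_N$), and since $V'(x)\le x^{-(m+1)}$, while the number of width-$w$ chains through $i_0$ is at most $w$, the total attraction is bounded by $\lambda_N^{-(m+1)}\sum_{w=1}^{\lfloor N^\alpha\rfloor}w^{-m}\le C\lambda_N^{-(m+1)}N^{\alpha(1-m)}$ for $m<1$. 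Balancing $\lambda_N^{-(n+1)}\lesssim\lambda_N^{-(m+1)}N^{\alpha(1-m)}$ yields the claimed exponent. This uses only the single summation in \eqref{ForceBalanceCond}, so it holds for every critical point; it is the ``chain-force'' estimate on which everything else rests.

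Next, the upper bound on $w_N$. A cheap preliminary observation is that $l_i\le l_*=1$ for every $i$: if some $l_i>1$, then every chain straddling terrace $i$ has length $>1$, hence $V'>0$ throughout, and \eqref{ForceBalanceCond} cannot vanish. This already gives $w_N<N$, but only the trivial exponent. To gain the factor $N^{-\beta}$ I would analyze the force balance at a terrace that is \emph{not} small (for instance the largest) and exploit the long-range character of the attraction: such a step sees of order $N^\alpha$ partners, so when the configuration is spread out the cumulative monopole attraction (the $x^{-(m+1)}$ part) grows, whereas the lower bound on $\lambda_N$ caps both the number and the strength of the short-range repulsive chains (those of length $<1$) by a controlled power of $\lambda_N^{-1}$. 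Comparing these two contributions forces the large terraces, and hence the average spacing, to be polynomially small, giving $w_N\le CN^{1-\beta}$ for some $\beta>0$. I would organize this either by bounding the maximal terrace directly, or, more robustly, by grouping the steps into consecutive blocks of length $\lfloor N^\alpha\rfloor$ and bounding the extent of each block by applying \eqref{ForceBalanceCond} to steps inside it, then summing over the $\approx N^{1-\alpha}$ blocks.

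The hard part is precisely this upper bound. For a general critical point the terraces may be highly non-uniform, ranging from $\lambda_N$ up to order one, so one cannot insert a single spacing into the sums; the attractive and repulsive contributions must be estimated simultaneously for an arbitrary configuration subject only to \eqref{ForceBalanceCond} and $\lambda_N\le l_i\le 1$. The estimates also genuinely split according to $-1<m<0$, $m=0$, and $0<m<1$, because the attractive lattice sum $\sum_w w^{-m}$ is divergent, logarithmic, or convergent in these three regimes, and the $m=0$ case carries the logarithmic factors appearing in Theorem \ref{thm..Bunching}. I do not expect the resulting $\beta$ to match the lower-bound exponent $(1-m)\alpha/(n-m)$ in general (cf. Remark 4), but any $\beta>0$ suffices: then $w_N/N\le CN^{-\beta}\to 0$, so every sequence of critical points is a bunching sequence. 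Because each step above invokes only \eqref{ForceBalanceCond} and never minimality, the conclusion holds for all critical points, which is the content of the corollary.
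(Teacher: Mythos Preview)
Your framing is correct: the corollary follows because the upper bound for $w_N$ in Theorem~\ref{thm..Bunching}(C) uses only the force balance \eqref{ForceBalanceCond}, never minimality. Your argument for the lower bound $\lambda_N\gtrsim N^{-(1-m)\alpha/(n-m)}$ is also fine (and in fact a bit slicker than the paper's chain-integral Lemma~\ref{lem..step.chain.force.estimate}): bounding $V'(x)\le x^{-(m+1)}$ and using $l_j+\cdots+l_k\ge (k-j+1)\lambda_N$ gives the same exponent directly.

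The gap is in the second half. Neither of the two strategies you sketch for the $w_N$ upper bound carries through. ``Bound the maximal terrace via force balance'' cannot by itself improve on $l_i\le l_*$: at a terrace of size close to $l_*$ the neighbouring terraces may be as small as $\lambda_N$, so the number of short (repulsive) chains through $i$ can be as large as $(\lambda_N^{-1})^2$, and you have no control on the negative contribution without extra structural information on how many small terraces cluster near~$i$. ``Group into blocks of $\lfloor N^\alpha\rfloor$ consecutive \emph{indices}'' has the same problem: such a block can have extent anywhere between $N^\alpha\lambda_N$ and $N^\alpha l_*$, and force balance at a step inside it does not see the block boundaries.

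What the paper does---and what your outline is missing---is a covering argument in \emph{physical space}. One partitions $[x_{\lfloor N^\delta\rfloor},x_{N-\lfloor N^\delta\rfloor}]$ into intervals $K_\tau$ of fixed length $l_{**}$, and classifies each as $T$-dense or $T$-sparse according to whether its tripled neighbourhood $\bar K_\tau$ contains at least $N^{\delta/3}$ terraces. Dense intervals are handled by pure counting (there are at most $3N^{1-\delta/3}$ of them since the total number of terraces is $N$), contributing length $\le CN^{1-\delta/3}$. For a terrace $i$ lying in a sparse interval, the sparseness bounds the number of short chains through $i$ by $N^{2\delta/3}$, so the repulsion is at most $N^{2\delta/3}l_i^{-n-1}$; the attraction from the $\gtrsim N^\delta$ far partners then forces $l_i\le CN^{-\delta/(3(n+1))}$. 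Summing gives $w_N\le CN^{1-\beta}$ with $\beta=\delta/(3(n+1))$. This dense/sparse dichotomy is precisely the ``extra structural information'' your sketch lacks, and without it the force-balance inequality at a generic terrace does not close.
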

\begin{cor}\label{thm..NonBunchingCriticalPoint}
The following estimates for the minimal terrace length $\lambda_N$
and the system size $w_N$ hold for any {\em critical point} $X_N$ of the
energy $E$ \eqref{eq..Energy.mnalphaDefinition}:
  \begin{equation*}
    l_*\geq \lambda_N\geq
    \left\{
    \begin{array}{ll}
      CN^{-\frac{(1-m)\alpha}{n-m}}, & -1<m<1<n,m\neq 0, 0<\alpha\leq 1,\\
      CN^{-\frac{\alpha}{n}}(\log N)^{-\frac{1}{n}}, & m=0, 1< n, 0<\alpha\leq 1,\\
      C, & 1<m<n, 0<\alpha\leq 1,\\
      C, & -1<m<n, \alpha=0;
    \end{array}
    \right.
  \end{equation*}
  \begin{equation*}
    (N-1)l_*\geq w_N\geq
    \left\{
    \begin{array}{ll}
      CN^{1-\frac{(1-m)\alpha}{n-m}}, & -1<m<1<n, m\neq 0, 0<\alpha\leq 1,\\
      CN^{1-\frac{\alpha}{n}}(\log N)^{-\frac{1}{n}}, & m=0, 1< n, 0<\alpha\leq 1,\\
      CN, & 1<m<n, 0<\alpha\leq 1,\\
      CN, & -1<m<n, \alpha=0.
    \end{array}
    \right.
  \end{equation*}
In particular, if ($1<m<n$, $0<\alpha\leq 1$) or ($-1<m<n$, $1<n$, $\alpha=0$), then {\em any sequence of critical points $\{X_N\}_{N=1}^\infty$ 
is a non-bunching sequence}.
  {\em (The estimates here are exactly the same as the
  lower bounds for $\lambda_N$ and $w_N$ in 
  Theorems \ref{thm..Bunching}(B, C) and \ref{thm..Non-bunching}(B, C).)}
\end{cor}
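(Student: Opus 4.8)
The plan is to deduce the entire corollary from the force-balance condition \eqref{ForceBalanceCond} alone, exploiting the feature stressed in the paper that it is a \emph{single} summation. Because all four parameter regimes are governed by the same competition between a strong short-range repulsion and an accumulated long-range attraction, I would run one argument and read off the four exponents only at the last step, from the asymptotics of one elementary sum. Throughout write $R:=\lfloor N^{\alpha}\rfloor$, and recall that with $\alpha_1=\alpha_2=1$ one has $V'(x)<0$ for $0<x<l_*$ and $V'(x)>0$ for $x>l_*$.

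The first step is a telescoping ``cut'' identity. Fix $1\le i\le N-1$ and add the $i$ criticality equations $\partial E/\partial x_p=0$ for $p=1,\dots,i$ (the ends being free, criticality is simply $\partial E/\partial x_p=0$ with no extra boundary force). Every interacting pair contained in $\{1,\dots,i\}$ occurs once with each sign, both occurrences subject to the same constraint $q-p\le R$, and cancels, leaving only the pairs straddling the cut:
\[
\sum_{p\le i<q,\; q-p\le R} V'(x_q-x_p)=0 \qquad (\star).
\]
This holds at any critical point and is the sole input from criticality. The upper bounds are now immediate: if some terrace satisfied $l_i>l_*$, then every straddling distance would obey $x_q-x_p\ge x_{i+1}-x_i=l_i>l_*$, forcing every summand in $(\star)$ to be strictly positive, a contradiction. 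Hence $l_i\le l_*$ for all $i$, giving $\lambda_N=\min_i l_i\le l_*$ and $w_N=\sum_{i=1}^{N-1}l_i\le (N-1)l_*$. (When $\alpha=0$, $(\star)$ degenerates to $V'(l_i)=0$, i.e. $l_i=l_*$ for every $i$, recovering the exact lattice.)

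The core is the lower bound on $\lambda_N$. Pick $i_0$ with $l_{i_0}=\lambda_N$ and apply $(\star)$ at $i=i_0$; splitting into attractive ($x_q-x_p>l_*$) and repulsive ($x_q-x_p<l_*$) pairs, $(\star)$ says that total attraction equals total repulsion. We may assume $\lambda_N$ is below a fixed fraction of $l_*$ (otherwise the claimed bound, which vanishes as $N\to\infty$, is automatic), so that $|V'(\lambda_N)|\ge\tfrac12\lambda_N^{-(n+1)}$; since the straddling pair $(i_0,i_0+1)$ is present, the total repulsion is at least this much. On the attractive side I would discard the fast-decaying part, $V'(x_q-x_p)\le (x_q-x_p)^{-(m+1)}$, use $x_q-x_p\ge (q-p)\lambda_N$, and note that for each separation $k=q-p$ there are at most $k$ straddling pairs, obtaining
\[
\text{(total attraction)}\ \le\ \sum_{k=1}^{R}k\,(k\lambda_N)^{-(m+1)}\ =\ \lambda_N^{-(m+1)}\sum_{k=1}^{R}k^{-m}.
\]
Equating the two bounds gives $\lambda_N^{-(n-m)}\le C\sum_{k=1}^{R}k^{-m}$, and the elementary asymptotics of $\sum_{k=1}^{R}k^{-m}$ with $R\sim N^{\alpha}$ produce all cases at once: the sum is $\sim R^{1-m}\sim N^{\alpha(1-m)}$ for $-1<m<1$, giving $\lambda_N\gtrsim N^{-\alpha(1-m)/(n-m)}$, and it is $O(1)$ both for $m>1$ and for $R=1$ (that is $\alpha=0$), giving $\lambda_N\gtrsim C$. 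The lower bound on the system size is then free: $w_N=\sum_{i=1}^{N-1}l_i\ge (N-1)\lambda_N$, so each $\lambda_N$-bound lifts to the stated $w_N$-bound, and whenever $\lambda_N\gtrsim C$ this yields $w_N\gtrsim N$, i.e. any sequence of critical points is non-bunching.

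The main obstacle is the attractive estimate. The count ``at most $k$ straddling pairs per separation'' must be handled carefully near the two ends of the chain, and replacing the genuinely attractive sum by the full sum over straddling pairs must be shown not to degrade the exponent or the uniformity of the constants in $N$. The genuinely delicate point is the borderline behavior of the controlling series: when the attractive force is organized as a single chain force (as in the paper's description of ``the force exerted by a step chain'') the series sits exactly at its convergence threshold at $m=0$, which is where the extra factor $(\log N)^{-1/n}$ in that line of the statement originates; verifying that this logarithmic factor only weakens — never breaks — the bound, and that the dichotomy ``$\lambda_N$ small versus $\lambda_N\gtrsim l_*$'' is consistent across all four regimes, is where I expect the real care to be required.
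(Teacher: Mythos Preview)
Your argument is correct and in fact sharper than the paper's at $m=0$: your direct counting gives $\lambda_N\gtrsim N^{-\alpha/n}$ with no logarithmic loss, whereas the paper only obtains $\lambda_N\gtrsim N^{-\alpha/n}(\log N)^{-1/n}$. The identity $(\star)$ you derive by telescoping is exactly the paper's force-balance condition \eqref{ForceBalanceCond} (written there as $\partial E/\partial l_i=0$), so the starting point is the same; the difference is entirely in how the attractive side is organized.

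The paper does not group straddling pairs by separation $k=q-p$. Instead it writes the force balance as $V'(\lambda_N)+F_i+\sum_j G_j$, where $F_i$ and each $G_j$ is a \emph{chain} sum $\sum_k V'(\xi_k)$ with increments $\ge\lambda_N$, and then invokes a general ``chain force'' estimate (Lemma~\ref{lem..step.chain.force.estimate}) proved by comparing to an integral of $V'$. That integral comparison brings in $V(N^\alpha\lambda_N)$, which for $m=0$ is $\sim\log N$; multiplying by the $\sim N^\alpha$ chains is where the spurious logarithm enters. For $0<m<1$ the paper also needs a further splitting of the $G_j$ according to whether $l_j+\cdots+l_i\le l_{**}$, so the three ranges of $m$ are handled by genuinely different computations. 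Your grouping by separation collapses all of this into the single elementary sum $\sum_{k=1}^{R}k^{-m}$, whose asymptotics immediately read off the four regimes.

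What the paper's route buys is reusability: the chain estimates (Lemmas~\ref{lem..step.chain.force.estimate} and \ref{lem..step.chain.energy.estimate}) are set up abstractly over the set \eqref{set.chain-1} and are invoked again for the energy lower bounds in \eqref{eq..energy.lower.bound} and for the system-size argument in Section~\ref{sec..UpperBoundSystemSize}. Your argument is more economical for this corollary in isolation but does not furnish those auxiliary tools. The worries you flag at the end (boundary counts, the dichotomy on the size of $\lambda_N$) are not genuine obstacles: the number of straddling pairs of separation $k$ is at most $\min(k,i_0,N-i_0)\le k$ regardless of where $i_0$ sits, and the ``$\lambda_N$ small'' hypothesis is only used to get $|V'(\lambda_N)|\ge\tfrac12\lambda_N^{-(n+1)}$, which fails only when $\lambda_N$ already exceeds a fixed constant.
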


The rest of the paper is outlined as follows.  
In Section \ref{sec..heuristic.scaling.laws}, 
we give an upper bound for the minimum energy $E_N$ in various regimes
(Theorem \ref{thm..EnergyUpperBound}).
The results are illustrated in the phase diagram 
(Fig. \ref{figure..PhaseDiagram}) which heuristically 
reveal the bunching and non-bunching regimes and the transition between them.
Theorems \ref{thm..Bunching} and \ref{thm..Non-bunching} are proved in 
Sections 
\ref{sec..lower.bounds.terrace.lengths}--\ref{sec..Proof-Non-bunching}.
Given the proofs, Corollaries \ref{thm..BunchingCriticalPoint} 
and \ref{thm..NonBunchingCriticalPoint} follow immediately.
In the Appendix, Section \ref{ProofEnergyUpperBound}, 
we prove Theorem \ref{thm..EnergyUpperBound} 
which involves quite detail and elaborate calculations.

\section{Upper Bounds for the Minimum Energy $E_N$ and 
Phase Diagram}\label{sec..heuristic.scaling.laws}
Inspired by the numerical evidence in \cite{Tersoff1995-p2730-2733} and 
the analytical results from \cite{Luo2016-p737-771}, we anticipate that 
the optimal height profile in our general epitaxial growth model \eqref{eq..Energy.mnalphaDefinition} is almost a uniform step train consisting of a 
series of consecutive terraces with roughly equal lengths $l_0$. 
In this section, we would exploit such a step profile as an ansatz. 
More precisely, let $l_0$ be a positive number.
Then the uniform step train with length $l_0$ is defined as:
\begin{equation}
  X_N^{0}=(x_1^0,\cdots,x_N^0)^T,\,\,x_i^0=\left(i-1\right)l_0,\,\,i=1,2,\cdots,N.\label{eq..ansatz}
\end{equation}
By optimizing the value of $l_0$, we arrive at 
an upper bound for the minimum energy $E_N$ and 
also a first illustration about its dependence on $m$, $n$ and $\alpha$.

The result is listed in five cases corresponding to
different parameter values.

\begin{thm}[Upper bound for $E_N$]
\label{thm..EnergyUpperBound} 
For any $0\leq\alpha\leq 1$ and $-1<m<n$, 
(there exists an $l_0$ such that)
the following upper bounds for $E[X_N^0]$ hold.
(Again, in the following the constants $C$ can depend on 
$m, n$ and $\alpha$ but not on $N$.)
\begin{description}
\item[Case A: $-1 < m < n < 1$, $0 < \alpha$.]
\begin{align*}
  E[X_N^0]
  \leq 
  \left\{\begin{array}{lll}
    -C N^{1+\alpha}, & mn>0,\\
    C N^{1+\alpha}, &  mn\leq 0.
  \end{array}
  \right.
\end{align*}

\item[Case B: $-1 < m < n = 1$, $0 < \alpha$.]
\begin{align*}
  E[X_N^0]
  &\leq 
  CN^{1+\alpha}\log N.
\end{align*}

\item[Case C: $-1 < m < 1 < n$, $0 < \alpha$.]
\begin{align*}
  E[X_N^0]
  \leq 
  \left\{\begin{array}{ll}
    C N^{1+\frac{n(1-m)\alpha}{n-m}}, & -1<m<0,\\
    \frac{(n-1)\alpha}{n} N^{1+\alpha}\log N+CN^{1+\alpha}, & m=0, 0<\alpha<1,\\
    \frac{n-1}{2n} N^{1+\alpha}\log N+CN^2, & m=0, \alpha=1,\\
    -C N^{1+\frac{n(1-m)\alpha}{n-m}}, & 0<m<1.
  \end{array}
  \right.
\end{align*}

\item[Case D: $1 = m < 1 < n$, $0 < \alpha$.]
\begin{align*}
  E[X_N^0]\leq \textstyle -C N\log N.
\end{align*}

\item[Case E:] 
\begin{align}
  E[X_N^0] \leq
  \left\{\begin{array}{lcc}
  \text{(i):} & -C N, & 1<m<n,\,\, 0<\alpha,\\
  \text{(ii):} & CN, & -1<m<n,\,\,\alpha=0.
  \end{array}
  \right.\label{eq..heuristic.case.E}
\end{align}
\end{description}
(We remark that Cases A to D with $\alpha=0$ 
are in fact included in Case E.)
\end{thm}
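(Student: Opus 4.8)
The plan is to substitute the uniform train $X_N^0$ directly into the energy \eqref{eq..Energy.mnalphaDefinition}, reduce the resulting double sum to a one-parameter family of elementary power (or logarithmic) sums that can be estimated by integral comparison, and then optimize over the single free parameter $l_0$. Since $x_j^0 - x_i^0 = (j-i)l_0$ depends only on the index difference $k = j-i$, and for each $k$ there are exactly $N-k$ admissible pairs, the energy collapses to
\begin{equation*}
E[X_N^0] = \sum_{k=1}^{K}(N-k)\,V(k l_0), \qquad K := \min\{N-1,\lfloor N^\alpha\rfloor\}.
\end{equation*}
Writing $V$ as the difference of its two monomials (and treating $m=0$ or $n=0$ via the logarithmic forms in \eqref{eq..PairPotential.mn}), the whole expression is governed by the two scalar sums $\sum_{k=1}^K (N-k)k^{-m}$ and $\sum_{k=1}^K(N-k)k^{-n}$, each of which splits as $N\sum_{k\le K}k^{-p} - \sum_{k\le K}k^{1-p}$.

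Next I would estimate $\sum_{k\le K}k^{-p}$ by comparison with $\int_1^K t^{-p}\,\D t$ (monotonicity of $t\mapsto t^{-p}$, equivalently Euler--Maclaurin), keeping track of the three regimes $p<1$, $p=1$, $p>1$, which produce $K^{1-p}/(1-p)$, $\log K$, and a convergent $\zeta$-type constant, respectively; the weighted sum $\sum k^{1-p}$ is handled identically with $p$ replaced by $p-1$, so the relevant thresholds for the two terms sit at $m,n=1$ and $m,n=2$. Substituting $K\sim N^\alpha$ then yields, to leading order, an expression of the schematic form
\begin{equation*}
E[X_N^0] \;\approx\; A(N)\,l_0^{-m} + B(N)\,l_0^{-n},
\end{equation*}
where $A(N)$ and $B(N)$ are explicit powers of $N$ (with logarithmic factors in the critical cases), carrying the signs $-\tfrac{1}{m}$ and $+\tfrac{1}{n}$ inherited from $V$.

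The final step is the choice of $l_0$. Whenever minimizing $A(N)l_0^{-m}+B(N)l_0^{-n}$ has an interior optimum, I would solve the stationarity equation exactly to obtain the balance $l_0^{n-m}\sim |B/A|$; this fixes $l_0$ as a definite power of $N$ --- matching the predicted minimal terrace length $\lambda_N\sim N^{-(1-m)\alpha/(n-m)}$ of Theorem \ref{thm..Bunching} --- and substituting back produces the claimed exponents, e.g.\ $1+\frac{n(1-m)\alpha}{n-m}$ in Case C and $1+\alpha$ in Case A after simplifying the balanced value. When the two terms do not produce an interior optimum, or in the boundary situations (Cases B, D, and Case E(ii)), it suffices to take $l_0$ to be a convenient constant (such as $l_*$) and read off the dominant term directly.

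The hard part is bookkeeping rather than any single deep estimate: in each parameter window delimited by $m,n\in\{0,1,2\}$ and $\alpha\in(0,1]$ one must correctly identify which of the four constituent sums dominates and with what sign, and then carry the subleading terms accurately enough to justify the error $CN^{\cdots}$ stated alongside the leading order. The genuinely delicate cases are the critical exponents $m=0$, $n=1$, $m=1$ and the fully nonlocal value $\alpha=1$: there the integral comparison produces logarithms, and for $\alpha=1$ the weight $N-k$ is no longer $\approx N$ over the whole range $k\le N-1$, so it contributes at leading order and must be integrated against $k^{-p}$ (or $\log k$) explicitly. Recovering the precise constants $\frac{(n-1)\alpha}{n}$ and $\frac{n-1}{2n}$ in the $m=0$ subcases requires combining the leading $\log N$ from the attractive logarithmic sum with the $-\frac{\alpha}{n}\log N$ generated by $\log l_0$ at the balanced value $l_0\sim N^{-\alpha/n}$, and this matching of logarithmic coefficients is where the computation is most error-prone.
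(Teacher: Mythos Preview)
Your proposal is correct and follows essentially the same route as the paper: reduce $E[X_N^0]$ to $\sum_{k=1}^{K}(N-k)V(kl_0)$, split into the two monomial contributions $e_m-e_n$, estimate each of $\sum k^{-p}$ and $\sum k^{1-p}$ by integral comparison (the paper packages this as a simple monotonicity lemma), and then plug in a specific $l_0$ chosen by balancing the two leading terms, with the paper taking exactly the powers $l_0\sim N^{-\alpha}$, $N^{-\frac{(1-m)\alpha}{n-m}}$, and $O(1)$ you predict for Cases A, C, and D/E respectively. The paper's only extra care beyond what you outline is in fixing the multiplicative constant $C_0$ in $l_0$ precisely (e.g.\ $C_0=\big(\tfrac{2m(1-m)(2-m)}{n-1}\big)^{1/(n-m)}$ in Case C4) so that the leading coefficient comes out strictly negative in the subcases where the theorem claims $E[X_N^0]\le -CN^{\cdots}$; this is exactly the ``bookkeeping'' you flag, and your identification of the $m=0$ and $\alpha=1$ logarithmic subcases as the most delicate matches where the paper expends the most effort.
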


In the above, it might be illustrative to consider
the chosen values of $l_0$ even though it is not needed in the statement.
Overall, we have 
$l_0 \sim N^{-\alpha}$ for Cases (A) and (B), 
$l_0 \sim N^{-\frac{\alpha(1-m)}{n-m}}$ for Case (C),
and $l_0 \sim 1$ for Cases (D) and (E).

\begin{figure}
  \subfigure{\includegraphics[width=0.8\textwidth]{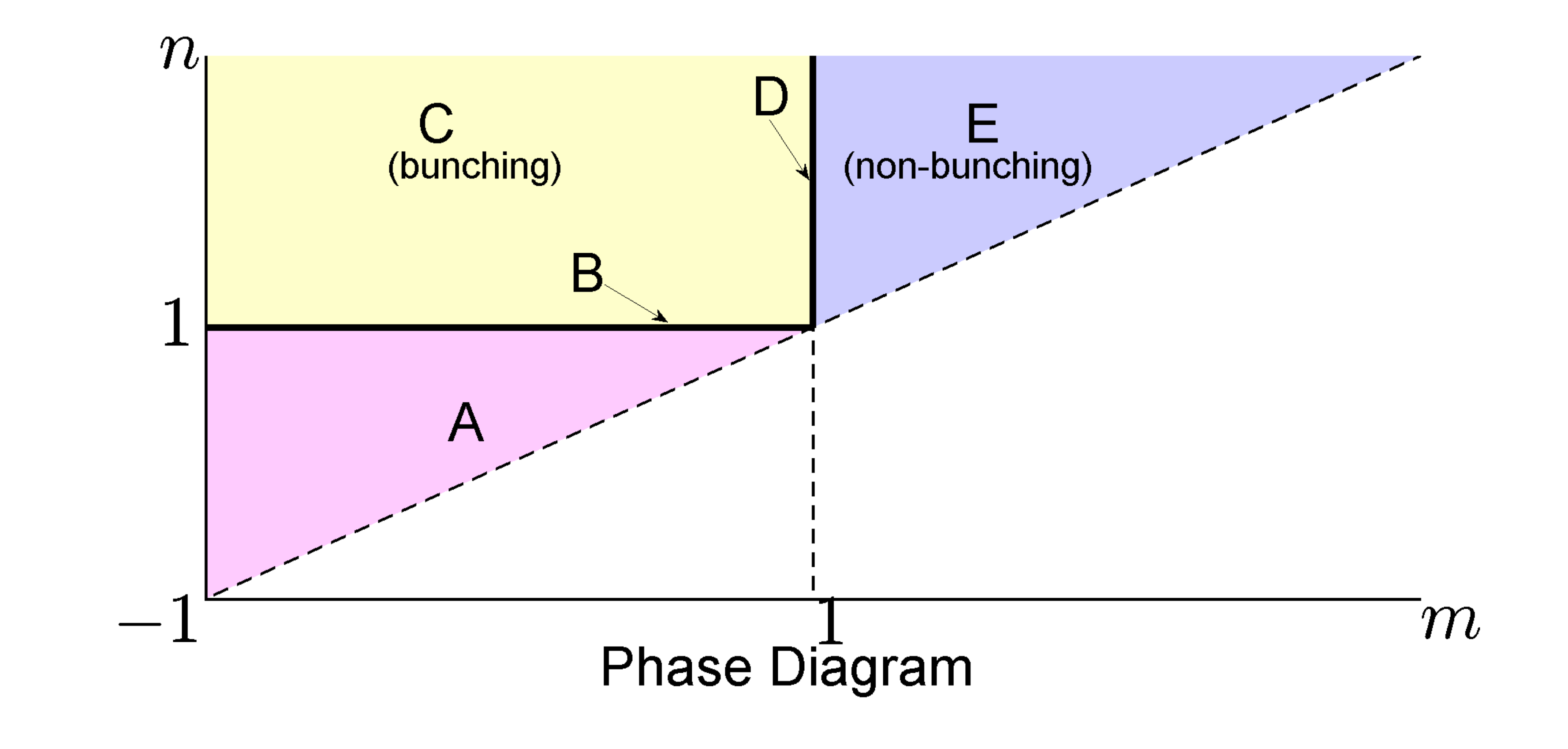}}
  \caption{Phase Diagram of the Scaling Law. This diagram characterizes the scaling behaviors of Lennard--Jones $(m,n)$ potential.}\label{figure..PhaseDiagram}
\end{figure}

The above quantitative description is illustrated in the form of a 
phase diagram for $\alpha>0$ (Figure \ref{figure..PhaseDiagram}). 
Note that bunching phenomena occurs in cases A, B, C, and D with the typical
length scale $l_0$ for the minimal terrace length gradually {\em increases}.
In other words, the bunching effect is {\em weakened}. Case A ($-1 < m < n < 1$)
corresponds to weak repulsion but strong attraction leading to the most
severe type of bunching with $l_0 \sim O(N^{-\alpha})$ (for $0<\alpha \leq 1$).
Case E ($1 < m < n$) corresponds to strong repulsion but weak attraction
leading to no bunching with $l_0 \sim O(1)$. 
We point out again that 
the Tersoff's epitaxial growth model 
\cite{Tersoff1995-p2730-2733,Liu1998-p1268-1271}
analyzed in \cite{Luo2016-p737-771} corresponds to Case C with $m=0, n=2$.
The classical Lennard-Jones potential falls in Case E with $m=6, n=12$.

The proof of the Theorem is quite involved in terms of
calculation and is thus postponed to Appendix \ref{ProofEnergyUpperBound}.
But here we remark about the scope of this paper. 
Our main contribution is a quantitative description of Cases C and E.
We will provide matching lower bounds for the minimum energy $E_N$, 
the minimum terrace length $\lambda_N$ and the system size $w_N$.
For technical reasons, our method currently cannot handle
Case A rigorously beyond the upper bound for $E_N$.
Cases B and D are critical boundary cases in the phase diagram whose
asymptotics are hard to quantify.

\section{Proof of the Theorems}
In this section, we will prove our main results:
Theorem \ref{thm..Bunching} (which covers Case C for
the bunching regime) and Theorem \ref{thm..Non-bunching} 
(which covers Case E for the non-bunching regime).
Upper bounds for the minimum energy $E_N$ for both Theorems 
are already stated in Theorem \ref{thm..EnergyUpperBound} 
in the previous section. To establish more precise information,
we will make use of 
the {\em force balance} condition \eqref{ForceBalanceCond}
(Lemma \ref{lem..step.chain.force.estimate})
and {\em minimum energy consideration}
(Lemma \ref{lem..step.chain.energy.estimate}).
It turns out that the crucial quantity is the minimum terrrace length 
$\lambda_N$. Furthermore, there is a connection between $\lambda_N$ and
a lower bound for $E_N$ (see \eqref{eq..energy.lower.bound}).
Using this relation, we are able to obtain matching lower and upper bounds for 
both $\lambda_N$ and $E_N$. 
Note that the use of force balance is applicable for any 
critical points of the energy, not just minimizers.
In particular, the proof of the upper bound for the system size $w_N$
in Section \ref{sec..UpperBoundSystemSize} fully takes advantage of 
such a consideration.

\subsection{Useful Lemmas}\label{sec..useful.lemmas}
We first obtain some a priori upper bounds for all the terrace lengths and 
the system bunch size $w_N$. The first three use force balance while the others 
use energy consideration.
\begin{prop}\label{prop..upper.bound.terrace.length}
  Suppose that $-1<m<n$ and $0\leq \alpha\leq 1$. For any $N$ and critical point $X_N=(x_1,\cdots,x_N)^T$ of \eqref{eq..Energy.mnalphaDefinition}, we have $x_{i+1}-x_i\leq l_*$ for all $i=1,2,\cdots, N-1$. In particular, 
  \begin{align}
  \lambda_N
  &:= \min_{1\leq i\leq N-1}\{x_{i+1}-x_i\}\leq l_*,\\
  w_N
  &:= x_N-x_1 \leq (N-1)l_*.
  \end{align}
\end{prop}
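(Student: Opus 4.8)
The plan is to exploit the force balance condition \eqref{ForceBalanceCond}, which holds at any critical point, together with the sign structure of $V'$. Recall that for $x>0$ we have $V'(x) = \alpha_1 x^{-(m+1)} - \alpha_2 x^{-(n+1)} = x^{-(n+1)}(\alpha_1 x^{n-m} - \alpha_2)$; since $n>m$ the factor $\alpha_1 x^{n-m}-\alpha_2$ is strictly increasing and vanishes exactly at $x=l_*$. Hence $V'(x)<0$ on $(0,l_*)$, $V'(l_*)=0$, and crucially $V'(x)>0$ for every $x>l_*$. This is the only property of the potential the argument will use.

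Fix a terrace index $i$ and read the force balance \eqref{ForceBalanceCond} as the statement that the forces across the gap between steps $i$ and $i+1$ cancel: the sum ranges over pairs $(j,k)$ with $j\leq i\leq k$ and $k-j\leq \lfloor N^\alpha\rfloor-1$, and each summand is $V'$ evaluated at $l_j+\cdots+l_k = x_{k+1}-x_j$. The key observation is that every such pair straddles terrace $i$, so each argument $x_{k+1}-x_j = \sum_{s=j}^{k} l_s$ contains the single terrace length $l_i=x_{i+1}-x_i$ among its positive summands; consequently $x_{k+1}-x_j \geq l_i$ for every term, with the diagonal term $j=k=i$ contributing exactly $V'(l_i)$.

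I would then argue by contradiction. Suppose some terrace satisfies $l_i>l_*$. Then every argument $x_{k+1}-x_j\geq l_i>l_*$ lies in the region where $V'>0$, so \emph{every} summand in \eqref{ForceBalanceCond} is strictly positive (the term $j=k=i$ alone already gives $V'(l_i)>0$). This forces the sum to be strictly positive, contradicting the force balance identity. Therefore $x_{i+1}-x_i\leq l_*$ for every $i$. Taking the minimum over $i$ gives $\lambda_N\leq l_*$, and summing over $i=1,\dots,N-1$ gives $w_N=x_N-x_1=\sum_{i=1}^{N-1} l_i\leq (N-1)l_*$.

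The argument is short because the monotonicity of these ``straddling'' distances does all the work; the one step that must be handled with care is the observation that every distance appearing in \eqref{ForceBalanceCond} dominates $l_i$, which hinges on the indexing being exactly that obtained by summing $\partial E/\partial x_k=0$ over $k=1,\dots,i$ and cancelling the internal forces. The only other point worth checking is that the conclusion persists for boundary terraces: near the ends of the chain the range of $(j,k)$ is truncated, but the diagonal term $j=k=i$ is always present, and that single term already suffices to produce the contradiction.
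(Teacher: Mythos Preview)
Your proof is correct and follows essentially the same approach as the paper: argue by contradiction, observe that if $l_i>l_*$ then every distance $l_j+\cdots+l_k$ with $j\leq i\leq k$ exceeds $l_*$, hence every summand in \eqref{ForceBalanceCond} is strictly positive, contradicting the force balance. Your additional remarks on the sign structure of $V'$, on how \eqref{ForceBalanceCond} arises by summing the first-order conditions, and on the boundary terraces are accurate elaborations, but the core argument is identical to the paper's.
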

\begin{proof}
  We prove the statement by contradiction. Suppose for some $i$ that $l_i>l_*$.
  Then $l_j+\cdots+l_i+\cdots+l_k\geq l_i>l_*$ for all $j\leq i\leq k$. 
  Since $V'(x)>0$ for $x>l_*$, we have
  \begin{equation*}
    \sum_{j\leq i\leq k,\,\, k-j\leq \lfloor N^{\alpha}\rfloor-1}V'(l_j+\cdots+l_i+\cdots+l_k)>0
  \end{equation*}
contradicting the force balance condition \eqref{ForceBalanceCond}. 
\end{proof}

Next we show a lower bound for the terrace length. The proof is again 
based on the force balance condition.
We focus on the terrace $i$ with minimal length $l_i=\lambda_N$ and consider all interacting pairs $(j,k)$ with $j\leq i\leq k$. The result is achieved by carefully estimating all the forces related to the pair $(j,k)$. The following proposition is a first step toward the optimal lower bound of 
$\lambda_N$ and is extremely useful in the remaining part of this work.

\begin{prop}\label{prop..lower.bound.of.minimal.terrace.length.weak.version}
  Suppose that $-1<m<n$, $1<n$, and $0<\alpha\leq 1$. 
  For any $N$ and critical point $X_N=(x_1,\cdots,x_N)^T$ of \eqref{eq..Energy.mnalphaDefinition}, we have 
  $N^{-\alpha}\ll \lambda_N $ in the sense that 
  $\displaystyle \lim_{N\rightarrow\infty}\frac{N^{-\alpha}}{\lambda_N}=0$.

  In particular, there is an $N_0$ such that for $N > N_0$, it holds that
  $N^{-\alpha}\leq \lambda_N$.
\end{prop}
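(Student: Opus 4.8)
The plan is to read off the force balance condition \eqref{ForceBalanceCond} at the \emph{shortest} terrace and to show that, were $\lambda_N$ as small as $N^{-\alpha}$, the singular attractive pull of the nearest--neighbour pair would be too large to be cancelled by the bounded repulsive contributions of all the other pairs that span that terrace. The whole argument is a first--order (force) estimate, so it applies to any critical point, not merely minimizers.

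First I would fix a critical point $X_N$ and an index $i$ with $l_i=\lambda_N$. By Proposition \ref{prop..upper.bound.terrace.length} we have $\lambda_N\le l_*=1$, so everything takes place on the attractive branch of $V'$. Recall that with $\alpha_1=\alpha_2=1$ one has $V'(x)=x^{-(m+1)}-x^{-(n+1)}$, which is negative on $(0,l_*)$, vanishes at $l_*=1$, and is positive and \emph{bounded} on $(l_*,\infty)$, say by $M:=\max_{x>0}V'(x)=V'(l_{**})<\infty$; here the hypothesis $m>-1$ is what guarantees $V'$ stays bounded on the repulsive branch and decays to $0$ at infinity. Writing \eqref{ForceBalanceCond} at terrace $i$, I would isolate the single nearest--neighbour term (the pair $j=k=i$), whose argument is exactly $\lambda_N$, from the remaining terms.

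The two key estimates are then as follows. The nearest--neighbour term equals $V'(\lambda_N)=\lambda_N^{-(m+1)}-\lambda_N^{-(n+1)}\le 0$, and as soon as $\lambda_N^{\,n-m}\le\tfrac12$ it obeys $-V'(\lambda_N)\ge\tfrac12\lambda_N^{-(n+1)}$. Since every term with argument below $l_*$ is negative, the sum of the negative terms has magnitude at least $\tfrac12\lambda_N^{-(n+1)}$; by force balance the sum $S_+$ of the positive terms must equal this magnitude. On the other hand each positive term is at most $M$, and the number of pairs $(j,k)$ with $j\le i\le k$ and $k-j\le\lfloor N^\alpha\rfloor-1$ is at most $\lfloor N^\alpha\rfloor^2\le N^{2\alpha}$, so $S_+\le M N^{2\alpha}$. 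Combining, $\tfrac12\lambda_N^{-(n+1)}\le M N^{2\alpha}$, i.e. $\lambda_N\ge c\,N^{-2\alpha/(n+1)}$ for some $c=c(m,n)>0$. This bound holds for all large $N$ uniformly: if instead $\lambda_N$ exceeds the fixed threshold $(\tfrac12)^{1/(n-m)}$, then $\lambda_N$ is bounded below by a positive constant and the same inequality holds trivially once $c\,N^{-2\alpha/(n+1)}$ drops below that threshold. Finally, because $n>1$ we have $2\alpha/(n+1)<\alpha$, so
\[
\frac{N^{-\alpha}}{\lambda_N}\le \frac{1}{c}\,N^{-\alpha+2\alpha/(n+1)}=\frac{1}{c}\,N^{-\alpha(n-1)/(n+1)}\longrightarrow 0,
\]
which is exactly $N^{-\alpha}\ll\lambda_N$; the ``in particular'' statement follows since the ratio is eventually below $1$.

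The heart of the matter, and the only real obstacle, is the comparison of exponents between the singularity $\lambda_N^{-(n+1)}$ of the attractive nearest--neighbour force and the repulsive budget $N^{2\alpha}$ coming from the crude pair count. The lower bound works precisely because $n>1$ forces $n+1>2$, so the singular term outgrows the budget at the scale $\lambda_N\sim N^{-\alpha}$; this is where the hypothesis $1<n$ enters decisively. The remaining points — that the repulsive terms are bounded termwise (using $m>-1$) and that the naive count $N^{2\alpha}$ already suffices — are routine. I emphasize that this only yields the weak lower bound $\lambda_N\gtrsim N^{-2\alpha/(n+1)}$; sharpening the exponent to the optimal value in Theorem \ref{thm..Bunching}(B) is a separate, later step and is not attempted here.
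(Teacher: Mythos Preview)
Your proof is correct and follows essentially the same route as the paper: you apply the force balance condition \eqref{ForceBalanceCond} at the shortest terrace, isolate the singular nearest-neighbour contribution $V'(\lambda_N)$, bound all remaining terms by the crude count $N^{2\alpha}$ times $\max_{x>0}V'(x)$, and conclude $\lambda_N\ge cN^{-2\alpha/(n+1)}\gg N^{-\alpha}$ because $n>1$. The paper's argument is organized slightly more compactly (it writes the single inequality $0\le \lambda_N^{-m-1}-\lambda_N^{-n-1}+CN^{2\alpha}$ directly rather than splitting into positive and negative parts), but the idea, the estimates, and the resulting exponent are identical.
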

\begin{proof}
  Let $l_i=\lambda_N$. Again, by the force balance condition 
\eqref{ForceBalanceCond}, we have
  \begin{align*}
    0
    &= \sum_{j\leq i\leq k,\,\, k-j\leq \lfloor N^{\alpha}\rfloor-1}V'(l_j+\cdots+l_i+\cdots+l_k)\\
    &\leq V'(l_i)+\Big|\{(j,k):j\leq i\leq k, k-j\leq \lfloor N^\alpha\rfloor-1\}\Big|\max_{\xi>0}V'(\xi)\\
    &\leq \lambda_N^{-m-1}-\lambda_N^{-n-1}+CN^{2\alpha}.
  \end{align*}
  Note that for large $N$, we have $N^{-\alpha}\leq \frac{1}{2}$. 
If $\frac{1}{2} \leq \lambda_N$, then we are done. Now suppose
$\lambda_N<\frac{1}{2}$. Then we have $0\leq \Big[(\frac{1}{2})^{n-m}-1\Big]\lambda_N^{-n-1}+CN^{2\alpha}$ and thence $\lambda_N\geq CN^{-\frac{2\alpha}{n+1}}\gg N^{-\alpha}$. 
In either cases, we have the desired statement.
\end{proof}

The following important lemma gives an upper bound of the force exerted by a step chain.

\begin{lem}\label{lem..step.chain.force.estimate}
  Suppose that $-1<m<n$, $1<n$, and $0<\alpha\leq 1$. There exist $C$ and $N_0$ such that for any $N> N_0$ and critical point $X_N=(x_1,\cdots,x_N)^T$ of 
  \eqref{eq..Energy.mnalphaDefinition}, for $1\leq k\leq \lfloor N^{\alpha}\rfloor-1$, we have
  \begin{equation}
    \max\left(\sum_{i=1}^{k}V'(\xi_i)\right)
    \leq\left\{
    \begin{array}{ll}
      C \lambda_N^{-1}(N^{\alpha}\lambda_N)^{-m}, & -1<m< 0,\\
      C \lambda_N^{-1}\log N, & m=0,\\
      C \lambda_N^{-1}, & 0<m,
    \end{array}
    \right.\label{eq..step.chain.force.estimate}
  \end{equation}
  where the maximum is taken over the set 
  \begin{equation}\label{set.chain-1}
    \left\{\lambda_N\leq \xi_1<\xi_2<\cdots<\xi_k,\quad \xi_{i+1}-\xi_i\geq \lambda_N\quad\text{for}\quad i=1,2,\cdots, k-1\right\}
  \end{equation}
\end{lem}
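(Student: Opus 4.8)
We need to bound the sum of forces $\sum_{i=1}^k V'(\xi_i)$ over a chain of points that are all at least $\lambda_N$ apart and start at least $\lambda_N$ from the origin. The forces $V'(\xi_i) = \xi_i^{-m-1} - \xi_i^{-n-1}$ are signed. Since the $\xi_i$ are increasing, the very first terms (small $\xi_i$) can have large positive force, while large $\xi_i$ contribute little. The key constraint is the spacing $\xi_{i+1} - \xi_i \geq \lambda_N$, so the worst case should be the tightest packing: $\xi_i = i\lambda_N$.

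**The plan.** First I would reduce to the extremal chain $\xi_i = i\lambda_N$ by monotonicity. For $x$ large, $V'(x) \approx x^{-m-1}$, which is positive and decreasing for $m > -1$; for $x$ near the minimum of $V$ it changes sign. The sum $\sum V'(\xi_i)$ is maximized when the $\xi_i$ are pushed as small/tight as possible (to exploit the large positive $x^{-m-1}$ tail near zero) subject to the spacing and starting constraints. So I'd argue the maximum is attained (up to constants) at $\xi_i = i\lambda_N$.

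**Main estimate.** With $\xi_i = i\lambda_N$, we get $\sum_{i=1}^k V'(i\lambda_N) \leq \sum_{i=1}^k (i\lambda_N)^{-m-1}$, discarding the negative $-\xi_i^{-n-1}$ term. This gives $\lambda_N^{-m-1}\sum_{i=1}^k i^{-m-1}$. The behavior of $\sum_{i=1}^k i^{-m-1}$ splits into three regimes by comparison with an integral: if $-1 < m < 0$ the exponent $-m-1 \in (-1,0)$ so the sum diverges like $k^{-m}$, giving $\lambda_N^{-m-1} k^{-m} = \lambda_N^{-1}(k\lambda_N)^{-m} \leq C\lambda_N^{-1}(N^\alpha\lambda_N)^{-m}$ since $k \leq N^\alpha$; if $m = 0$ the sum is the harmonic series $\sim \log k \leq C\log N$, yielding $C\lambda_N^{-1}\log N$; if $m > 0$ the sum converges to a constant, giving $C\lambda_N^{-1}$. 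These match the three cases of the claimed bound exactly.

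**Main obstacle and the rigorous reduction.** The genuine difficulty is justifying that the tight-packing configuration is extremal, since $V'$ is not monotone (it is negative for $\xi < l_*$ and positive only for $\xi > l_*$). A clean way around this: I would not try to prove exact extremality but instead bound each term directly. For any admissible chain, $\xi_i \geq i\lambda_N$ by the spacing constraint, so $V'(\xi_i) \leq \max(0, \xi_i^{-m-1}) \leq (i\lambda_N)^{-m-1}$ using only that $x^{-m-1}$ is decreasing and that $V'(x) \leq x^{-m-1}$ always. Summing this pointwise bound recovers the estimate without any rearrangement argument, sidestepping the nonmonotonicity entirely. The remaining care is in the integral comparison for $\sum i^{-m-1}$ and tracking that the constant $C$ depends only on $m, n, \alpha$; these are routine once the pointwise bound is in place.
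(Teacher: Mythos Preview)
Your pointwise bound $V'(\xi_i)\leq \xi_i^{-m-1}\leq (i\lambda_N)^{-m-1}$ is correct and, for $m\leq 0$, summing it does give exactly the claimed estimate; this is in fact a cleaner route than the paper's for those two cases. The paper instead exploits the unimodal shape of $V'$ (increasing on $(0,l_{**})$, decreasing on $(l_{**},\infty)$), packs the $\xi_i$ tightly around the inflection point $l_{**}$, and bounds the resulting sum by $\lambda_N^{-1}\int V'(x)\,\D x=\lambda_N^{-1}[V(\text{right})-V(\text{left})]$. Your approach avoids this detour entirely when $m\leq 0$.

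However, for $m>0$ your computation does \emph{not} give what you claim. You write ``if $m>0$ the sum converges to a constant, giving $C\lambda_N^{-1}$,'' but what your pointwise bound actually yields is
\[
\sum_{i=1}^{k}(i\lambda_N)^{-m-1}=\lambda_N^{-m-1}\sum_{i=1}^{k} i^{-m-1}\leq C\,\lambda_N^{-m-1},
\]
not $C\lambda_N^{-1}$. Since $\lambda_N\leq l_*=1$, this is a strictly weaker bound, and in the bunching range $0<m<1$ (where $\lambda_N\to 0$) the discrepancy is a genuine power of $N$. The reason the pointwise bound fails here is that by replacing $V'(\xi_i)$ with $\xi_i^{-m-1}$ you discard the large \emph{negative} contribution $-\xi_i^{-n-1}$ for small $\xi_i$; the true sum benefits from this cancellation. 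The paper's integral trick captures exactly this: the antiderivative of $V'$ is $V$, which is bounded below by $V(l_*)$ on the left end, while on the right end $V$ stays bounded (indeed $\leq 0$) for $m>0$. A quick repair in your spirit: at most $l_{**}/\lambda_N=C\lambda_N^{-1}$ of the $\xi_i$ lie in $[\lambda_N,l_{**}]$, each contributing at most $V'(l_{**})=O(1)$; for the $\xi_i>l_{**}$, use monotonicity of $V'$ and compare to $\lambda_N^{-1}\int_{l_{**}}^{\infty}V'=-\lambda_N^{-1}V(l_{**})=O(\lambda_N^{-1})$. Either way, the $m>0$ case needs an argument beyond the crude pointwise bound you proposed.
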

\begin{figure}
  \begin{center}
  (a)\subfigure{\includegraphics[width=0.45\textwidth]{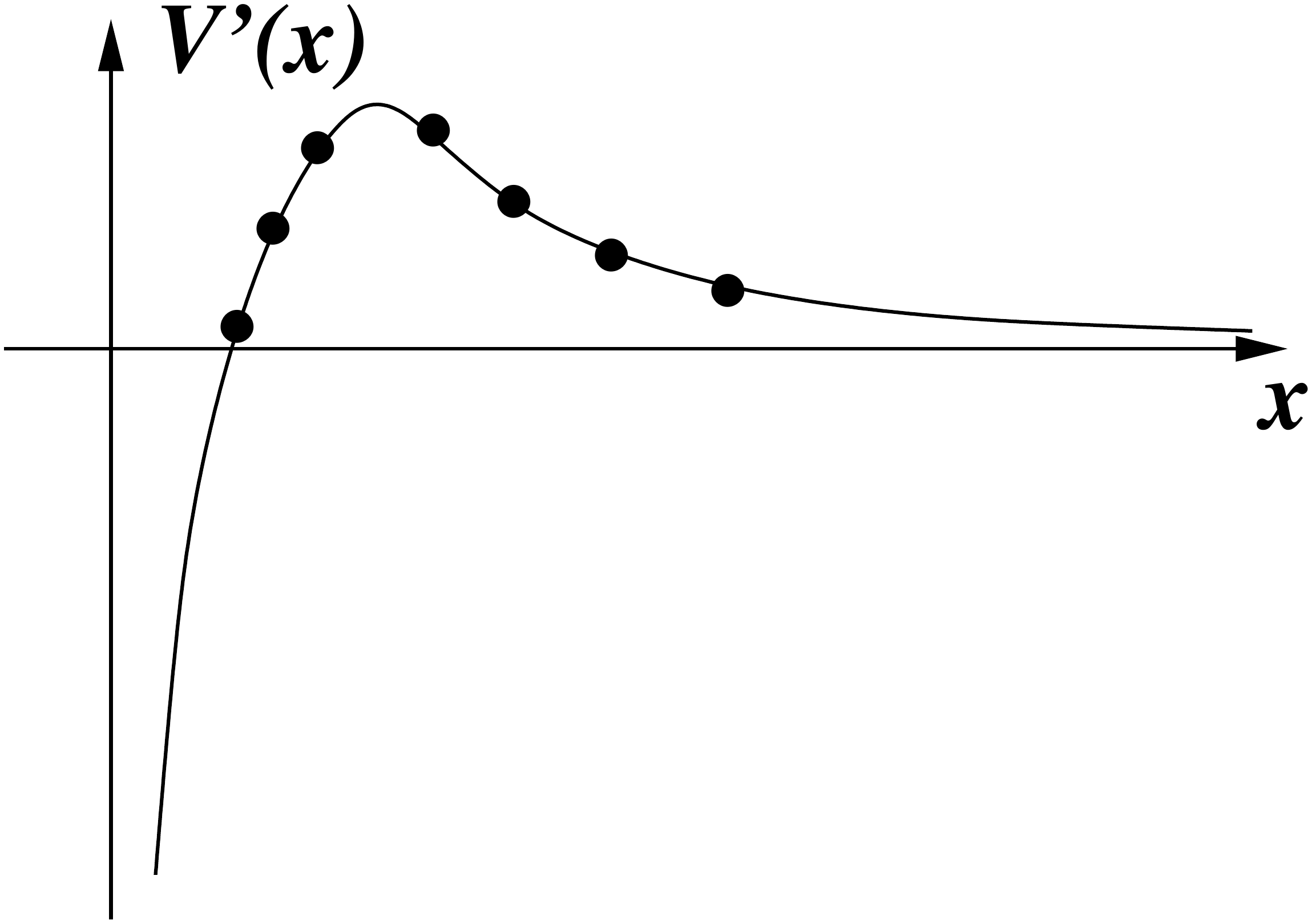}}
  (b)\subfigure{\includegraphics[width=0.45\textwidth]{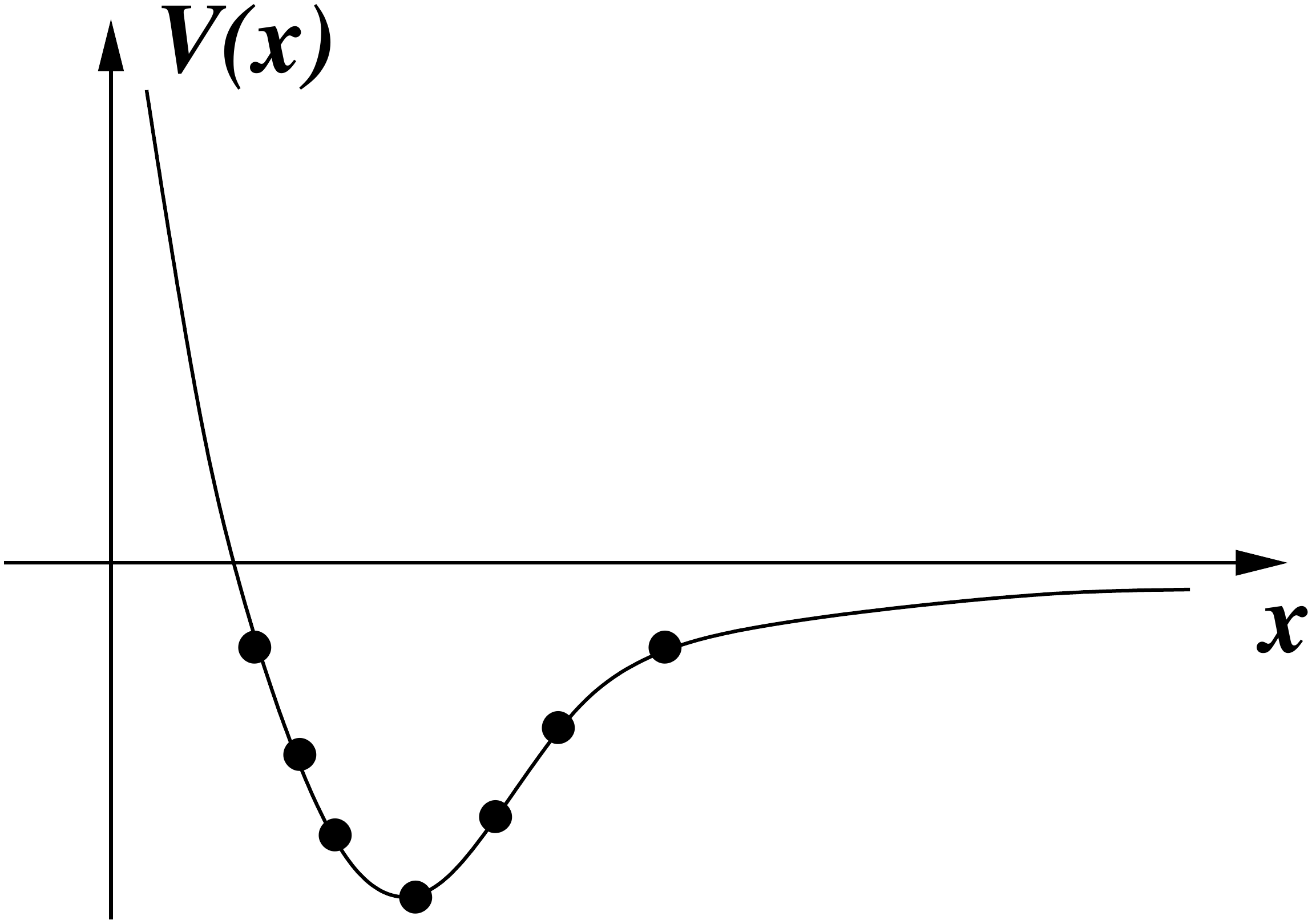}}
  \end{center}
  \caption{
(a) Maximization of chain configuration for force $V'$
(Lemma \ref{lem..step.chain.force.estimate});
(b) minimization of chain configuration for energy $V$
(Lemma \ref{lem..step.chain.energy.estimate}).}
\label{figure..chain.configuration}
\end{figure}

\begin{proof}
Let $\xi_i$'s be from the set \eqref{set.chain-1}.
  We define
  $k_1=|\{i\colon 0<\xi_i\leq l_{**}\}|$ and
  $k_2=|\{i\colon l_{**}<\xi_i\}|$.
  Then $k_1+k_2=k$ and $k_1,k_2\leq \lfloor N^{\alpha}\rfloor$. Without loss of generality, we assume that
  $k_1,k_2\geq 1$.
  (If one of them is 0, the result is still true.)
  Note that $V'(\cdot)$ is monotonically increasing on $(0,l_{**})$, and monotonically decreasing on $(l_{**},\infty)$. Then
  \begin{align*}
    \sum_{i=1}^{k} V'(\xi_i)
    &\leq \sum_{i=0}^{k_1-1}V'(l_{**}-i \lambda_N)+\sum_{i=0}^{k_2-1}V'(l_{**}+i \lambda_N)\\
    &\leq 2V'(l_{**})+\lambda_N^{-1}\int_{l_{**}-(k_1-1)\lambda_N}^{l_{**}+(k_2-1)\lambda_N}V'(x)\,\D x\\
    &= 2V'(l_{**})+\lambda_N^{-1}\left[V(l_{**}+(k_2-1)\lambda_N)-V(l_{**}-(k_1-1)\lambda_N)\right]\\
    &\leq C+\lambda_N^{-1}\left[V(l_{**}+(k_2-1)\lambda_N)-V(l_*)\right]\\
    &\leq C\lambda_N^{-1}+\lambda_N^{-1}V((l_{**}+1)N^\alpha \lambda_N),
  \end{align*}
  where in the last inequality we have used the facts that
$1\leq \lambda_N^{-1}$ and $l_{**}+(k_2-1)\lambda_N\leq (l_{**}+1)N^\alpha \lambda_N$ which follow from Propositions \ref{prop..upper.bound.terrace.length} and \ref{prop..lower.bound.of.minimal.terrace.length.weak.version},
respectively. 
Now \eqref{eq..step.chain.force.estimate} follows since for
sufficiently large $N$, we have
  \begin{equation*}
    V((l_{**}+1)N^\alpha \lambda_N)
    \leq\left\{
    \begin{array}{ll}
      C (N^{\alpha}\lambda_N)^{-m}, & -1<m< 0,\\
      C \log N, & m=0,\\
      0 , & 0<m,
    \end{array}
    \right.
  \end{equation*}
where we have used Proposition \ref{prop..lower.bound.of.minimal.terrace.length.weak.version} again for the case $-1 < m < 0$.
\end{proof}

In preparation for the energetic consideration, we have the following result 
for the pair potential.

\begin{lem}\label{lem..e_symmetry}
  Suppose that $-1<m<n$. For $0<x<l_*$, we have $V(l_*+x)<V(l_*-x)$.
\end{lem}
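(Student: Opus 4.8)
The plan is to reduce the inequality to a one-variable monotonicity statement in the exponent and then verify it by an elementary derivative computation. Since we have normalized $\alpha_1=\alpha_2=1$, the critical point is $l_*=1$, so the claim becomes $V(1+x)<V(1-x)$ for $0<x<1$. First I would introduce the auxiliary function $g(x):=V(1-x)-V(1+x)$ on $[0,1)$. It satisfies $g(0)=0$, and the desired inequality is precisely $g(x)>0$ on $(0,1)$. Hence it suffices to show $g'(x)>0$ for $x\in(0,1)$ and integrate from $0$.

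Next I would compute $g'(x)=-V'(1-x)-V'(1+x)$. Using the fact that in every case of the definition \eqref{eq..PairPotential.mn} the derivative has the single algebraic form $V'(u)=u^{-(m+1)}-u^{-(n+1)}$ for $u>0$ (this holds uniformly, including when $m=0$ or $n=0$), a short rearrangement gives $g'(x)=\phi_{n+1}(x)-\phi_{m+1}(x)$, where $\phi_p(x):=(1-x)^{-p}+(1+x)^{-p}$. Since $n+1>m+1>0$ (using $-1<m<n$), the problem reduces to showing that $\phi_p(x)$ is strictly increasing in the exponent $p$ for each fixed $x\in(0,1)$.

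To establish this monotonicity, I would write $\phi_p(x)=e^{pa}+e^{pb}$ with $a:=-\log(1-x)>0$ and $b:=-\log(1+x)<0$, so that $\partial_p\phi_p(x)=ae^{pa}+be^{pb}$. The key observation is that $a>|b|$, since $a+b=-\log(1-x^2)>0$; combined with $e^{pa}>1>e^{pb}$ for $p>0$, this yields $ae^{pa}>a>|b|>|b|e^{pb}$ and hence $\partial_p\phi_p(x)>0$. Integrating in $p$ from $m+1$ to $n+1$ gives $\phi_{n+1}>\phi_{m+1}$, so $g'>0$ on $(0,1)$, and integrating $g'$ from $0$ yields the lemma.

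The computation is elementary, so there is no serious obstacle; the one point requiring care is the inequality $a>|b|$, which is exactly the analytic encoding of the asymmetry of the single-well potential --- the singular left branch of $V$ near the origin rises more steeply than the slowly decaying right branch. It is also worth emphasizing that the argument is uniform across all parameter ranges $-1<m<n$ (in particular the logarithmic cases $m=0$ and $n=0$) precisely because $V'$ has the same algebraic form in each case, so no separate treatment of the boundary exponents is needed.
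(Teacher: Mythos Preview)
Your proof is correct. It differs from the paper's in one structural way and in the key technical step. The paper differentiates \emph{twice}: setting $h(x)=V(l_*+x)-V(l_*-x)$, it shows $h''(x)=V''(l_*+x)-V''(l_*-x)<0$ by an algebraic manipulation of the ratio $\frac{y^{n+2}-z^{n+2}}{y^{m+2}-z^{m+2}}$ (with $y=(1-x)^{-1}$, $z=(1+x)^{-1}$), and then integrates twice using $h(0)=h'(0)=0$. You instead work at the first-derivative level, proving $g'(x)=\phi_{n+1}(x)-\phi_{m+1}(x)>0$ directly via the monotonicity of $p\mapsto\phi_p(x)=e^{pa}+e^{pb}$, whose positivity of $\partial_p\phi_p$ boils down to the elementary observation $-\log(1-x)>\log(1+x)$. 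Your route is one integration shorter and arguably cleaner; the paper's second-derivative calculation is a bit more hands-on but has the minor advantage of making the role of the inflection structure of $V$ explicit. Both arguments cover all parameter ranges $-1<m<n$ uniformly, since, as you note, $V'(u)=u^{-(m+1)}-u^{-(n+1)}$ holds in every case of \eqref{eq..PairPotential.mn}.
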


\begin{proof}
  Denote $y=|l_{*}-x|^{-1}, z=|l_{*}+x|^{-1}$. Then $0<z<1=l_*<y$ and
  \begin{equation*}
    \frac{y^{n+2}-z^{n+2}}{y^{m+2}-z^{m+2}}=y^{n-m}+z^{m+2}
\left(\frac{y^{n-m}-z^{n-m}}{y^{m+2}-z^{m+2}}\right)>y^{n-m}>1>\frac{m+1}{n+1}.
  \end{equation*}
  Note that $V(l_*+x)=V(l_*-x)$ and $V'(l_*+x)=V'(l_*-x)$ at $x=0$. 
  Next we consider the following expression for $0 < x < l_*$:
  \begin{align*}
    & V''(l_*+x)-V''(l_*-x)\\
    = & -(m+1)z^{m+2}+(n+1)z^{n+2}
    +(m+1)y^{m+2}-(n+1)y^{n+2}\\
    =& \textstyle (n+1)\left\{-\frac{m+1}{n+1}\left(z^{m+2}-y^{m+2}\right)
    +(z^{n+2}-y^{n+2})\right\}\\
    =& (n+1)(z^{m+2}-y^{m+2})\left\{\frac{y^{n+2}-z^{n+2}}{y^{m+2}-z^{m+2}}-\frac{m+1}{n+1}\right\} < 0.
  \end{align*}
  Upon integrating the above inequality, the desired statement follows.
\end{proof}

\begin{lem}\label{lem..step.chain.energy.estimate}
  Suppose that $0\leq m<1$, $1<n$, and $0<\alpha\leq 1$. There exist $C$ and $N_0$ such that for any $N> N_0$ and critical point $X_N=(x_1,\cdots,x_N)^T$ of \eqref{eq..Energy.mnalphaDefinition}, for $1\leq k\leq \lfloor N^{\alpha}\rfloor-1$, we have
  \begin{equation}\label{step.chain.energy}
    \min\left(
    \sum_{i=1}^{k}V(\xi_i)\right)
    \geq
    \left\{\begin{array}{ll}
      -C\lambda_N^{-1}-C\lambda_N^{-1}(k\lambda_N)^{1-m},& 0<m<1,\\
      k\log(k\lambda_N)-C N^{\alpha}, & m=0,
    \end{array}
    \right.
  \end{equation}
  where the minimum is taken over the set 
  \begin{equation}\label{step.chain-2}
    \left\{\lambda_N\leq \xi_1<\xi_2<\cdots<\xi_k,\quad \xi_{i+1}-\xi_i\geq \lambda_N\quad\text{for}\quad i=1,2,\cdots, k-1\right\}.
  \end{equation}
  In particular, we have
  \begin{equation}
    \min\left(
    \sum_{i=1}^{k}V(\xi_i)\right)
    \geq-C\lambda_N^{-1}(N^{\alpha}\lambda_N)^{1-m},\,\,0<m<1.
  \end{equation}
\end{lem}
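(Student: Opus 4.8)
The plan is to produce the energetic analogue of the force estimate in Lemma~\ref{lem..step.chain.force.estimate}, replacing the role of the inflection point $l_{**}$ (where $V'$ peaks) by the well bottom $l_*$ (where $V$ is minimized) and the monotonicity of $V'$ by that of $V$. First I split the chain $\{\xi_i\}$ into the $p$ points lying in $(0,l_*]$ and the $q=k-p$ points lying in $(l_*,\infty)$. Labelling the left points outward from $l_*$ as $\eta_0>\eta_1>\cdots>\eta_{p-1}$ and the right points as $\zeta_0<\zeta_1<\cdots<\zeta_{q-1}$, the spacing constraint $\xi_{i+1}-\xi_i\ge\lambda_N$ forces $\eta_j\le l_*-j\lambda_N$ and $\zeta_j\ge l_*+j\lambda_N$. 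Since $V$ is decreasing on $(0,l_*)$ and increasing on $(l_*,\infty)$, monotonicity gives the pointwise bounds $V(\eta_j)\ge V(l_*-j\lambda_N)$ and $V(\zeta_j)\ge V(l_*+j\lambda_N)$, so that the whole sum is bounded below by the value of the ``densest packing'' surrogate $\sum_{j=0}^{p-1}V(l_*-j\lambda_N)+\sum_{j=0}^{q-1}V(l_*+j\lambda_N)$.

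The second step converts each monotone sum into an integral. A Riemann-sum comparison with mesh $\lambda_N$ (identical in spirit to the telescoping step in Lemma~\ref{lem..step.chain.force.estimate}) yields
\[
\sum_{j=0}^{q-1}V(l_*+j\lambda_N)\ge V(l_*)+\lambda_N^{-1}\!\int_{l_*}^{L}\!V(x)\,\D x,\qquad
\sum_{j=0}^{p-1}V(l_*-j\lambda_N)\ge V(l_*)+\lambda_N^{-1}\!\int_{a}^{l_*}\!V(x)\,\D x,
\]
where $a=l_*-(p-1)\lambda_N$ and $L=l_*+(q-1)\lambda_N$. The factor $\lambda_N^{-1}$ here is exactly the prefactor appearing in the statement, and adding the two estimates merges the integrals into $\lambda_N^{-1}\int_a^L V$ over a range of length at most $k\lambda_N$, plus the harmless constant $2V(l_*)$.

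The third step evaluates $\int_a^L V$ by the explicit antiderivative, using Propositions~\ref{prop..upper.bound.terrace.length} and~\ref{prop..lower.bound.of.minimal.terrace.length.weak.version} to control the endpoints (in particular $L\le l_*+(k-1)\lambda_N$). For $0<m<1$ the attractive monopole dominates the right tail, $\int_{l_*}^{L}V\sim -L^{1-m}$, which after multiplication by $\lambda_N^{-1}$ and the bound $L^{1-m}\le C+C(k\lambda_N)^{1-m}$ produces the term $-C\lambda_N^{-1}(k\lambda_N)^{1-m}$, while on the left the repulsive singularity $x^{-n}$ integrates with a favorable (positive) sign, giving $\int_a^{l_*}V\ge -C$ and hence only the cost $-C\lambda_N^{-1}$. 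For $m=0$ the leading logarithm integrates to $[x\log x-x]_a^L$; its dominant term $L\log L$ yields $k\log(k\lambda_N)$ after division by $\lambda_N$, once one checks (via the spacing constraint, which forces the chain to extend to order $k\lambda_N$) that the relevant range indeed grows like $k\lambda_N$, the remaining linear-in-$k$ pieces being absorbed into $-CN^\alpha$ since $k\le\lfloor N^\alpha\rfloor$. Finally the ``in particular'' bound follows by setting $k\le\lfloor N^\alpha\rfloor$ and using $N^\alpha\lambda_N\gg1$ from Proposition~\ref{prop..lower.bound.of.minimal.terrace.length.weak.version}, so that $(N^\alpha\lambda_N)^{1-m}\gg1$ absorbs the separate $-C\lambda_N^{-1}$ term.

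I expect the principal obstacle to be the left-hand contribution: a naive look at $\int_a^{l_*}V$ suggests a divergence as $a\to0$ coming from the attractive term, and the point is that for $-1<m<n$ with $n>1$ the repulsive $x^{-n}$ singularity dominates near the origin and enters with the right sign, so the left integral stays bounded below by a constant and costs only $-C\lambda_N^{-1}$ rather than something larger. The secondary difficulty, in the $m=0$ case, is the partition bookkeeping: one must ensure that whether the points cluster on the left (where $V$ is large and positive) or extend to the right (where $V\sim\log x$ grows), the lower bound $k\log(k\lambda_N)-CN^\alpha$ survives, the spacing estimate $\xi_k\ge k\lambda_N$ being the key input. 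Lemma~\ref{lem..e_symmetry}, which records that the well is steeper on the right than on the left, is what keeps the two-sided comparison consistent.
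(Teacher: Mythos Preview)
Your approach is correct and differs from the paper's in an instructive way. The paper, after reaching the same densest-packing surrogate and integral comparison, invokes Lemma~\ref{lem..e_symmetry} to reflect the left integral $\int_{l_*-(p-1)\lambda_N}^{l_*}V$ onto the right half-line as $\int_{l_*}^{l_*+(p-1)\lambda_N}V$, and then applies Jensen's inequality (using convexity of the antiderivative $W$ on $(l_*,\infty)$) to merge the two right-hand integrals into a single $2W\bigl(l_*+\tfrac{k-2}{2}\lambda_N\bigr)$. This symmetrization automatically produces an argument of order $k\lambda_N$ regardless of how $p$ and $q$ split.

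You bypass both the reflection and Jensen by bounding the left integral crudely ($\int_a^{l_*}V\ge -C$, since the repulsive $x^{-n}$ singularity dominates near the origin) and arguing that the right endpoint $L$ already reaches order $k\lambda_N$. This is more elementary and does work, but the justification in the $m=0$ case needs sharpening: what you actually need is $L\ge(k-1)\lambda_N$, and this follows from the packing constraint $p\le l_*/\lambda_N=\lambda_N^{-1}$ (at most $\lambda_N^{-1}$ points fit in $(0,l_*]$ with spacing $\ge\lambda_N$), hence $q\ge k-\lambda_N^{-1}$ and $L=1+(q-1)\lambda_N\ge(k-1)\lambda_N$. Your stated reason, $\xi_k\ge k\lambda_N$, concerns the original chain rather than the surrogate endpoint $L$, so it is not directly what is required. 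Finally, your closing remark about Lemma~\ref{lem..e_symmetry} is off: your argument does not use it, and the asymmetry goes the other way (the well rises faster on the \emph{left}, i.e., $V(l_*-x)>V(l_*+x)$), which is precisely what the paper exploits to reflect leftward mass to the right.
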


\begin{proof}
  Given any $\xi_i$'s from the set \eqref{step.chain-2}, we define
  $k_1=|\{i\colon 0<\xi_i\leq l_{*}\}|$ and
  $k_2=|\{i\colon l_{*}<\xi_i\}|$.
  Then $k_1+k_2=k$ and $k_1, k_2\leq \lfloor N^{\alpha}\rfloor$. 
Without loss of generality, assume $k\geq 4$ and
  $k_1,k_2\geq 1$.
  (The result remains the same if anyone of them is zero.)
  Recall that $V(\cdot)$ is monotonically decreasing on $(0,l_{*})$, and monotonically increasing on $(l_{*},\infty)$. Then
  \begin{align*}
    \sum_{i=1}^{k}V(\xi_i)
    &\geq \sum_{i=0}^{k_1-1} V(l_{*}-i \lambda_N)+ \sum_{i=0}^{k_2-1} V(l_{*}+i \lambda_N)\\
    &\geq 2V(l_{*})+\lambda_N^{-1}\int^{l_*}_{l_*-(k_1-1)\lambda_N}V(x)\,\D x+\lambda_N^{-1}\int_{l_*}^{l_*+(k_2-1)\lambda_N}V(x)\,\D x\\
    &\geq 2V(l_{*})+\lambda_N^{-1}\int_{l_*}^{l_*+(k_1-1)\lambda_N}V(x)\,\D x+\lambda_N^{-1}\int_{l_*}^{l_*+(k_2-1)\lambda_N}V(x)\,\D x\\
    &= 2V(l_{*})+\lambda_N^{-1}\Big[W(l_*+(k_1-1)\lambda_N)+W(l_*+(k_2-1)\lambda_N)-2W(l_*)\Big],
  \end{align*}
  where we have used Lemma \ref{lem..e_symmetry} in the third inequality. Since $W''(x)=V'(x)>0$, $W(x)$ is convex on $(l_*,\infty)$. By Jensen's inequality, we have 
$$W(l_*+(k_2-1)\lambda_N)+W(l_*+(k_1-1)\lambda_N)\geq 2W(l_*+\frac{k-2}{2}\lambda_N).$$ 
Then
  \begin{align*}
    \sum_{i=1}^{k}V(\xi_i)
    &\geq 2V(l_{*})+2\lambda_N^{-1}\left[W\left(l_*+\frac{k-2}{2}\lambda_N\right)-W(l_*)\right]\\
    &\geq -C\lambda_N^{-1}+2\lambda_N^{-1}W\left(l_*+\frac{k-2}{2}\lambda_N\right).
  \end{align*}

(i) If $k\lambda_N\leq 2$, then
  \begin{equation*}
    \sum_{i=1}^{k}V(\xi_i)
    \geq -C\lambda_N^{-1}\geq\left\{
      \begin{array}{ll}
        -C\lambda_N^{-1}-C\lambda_N^{-1}(k\lambda_N)^{1-m}, & 0<m<1,\\
        k\log(k\lambda_N)-CN^\alpha, & m=0,
      \end{array}
    \right.
  \end{equation*}
  where for $m=0$, we have used the facts 
$k\log(k\lambda_N)\leq k\log 2\leq 2N^\alpha$ 
and $\lambda_N N^\alpha\geq 1$ from
Proposition 
\ref{prop..lower.bound.of.minimal.terrace.length.weak.version}.

(ii) If $k\lambda_N>2$, then 
$1=l_*\leq \frac{k}{2}\lambda_N\leq l_*+\frac{k-2}{2}\lambda_N\leq k\lambda_N$.
  Note that $W(\cdot)$ is monotonically decreasing on $(l_*,\infty)$. Thus
  \begin{equation*}
    \textstyle
    W(l_*+\frac{k-2}{2}\lambda_N)
    \geq W(k\lambda_N)\geq\left\{
    \begin{array}{ll}
      -C(k\lambda_N)^{1-m}-C, & 0<m<1, \\
      k\lambda_N \log (k\lambda_N)-k\lambda_N-C, & m=0.
    \end{array}
    \right.
  \end{equation*}
  Now if $0<m<1$, then
  $
    \textstyle\sum_{i=1}^{k}V(\xi_i)
    \geq -C\lambda_N^{-1}(k\lambda_N)^{1-m}
  $ while if $m=0$, then
  $
    \textstyle\sum_{i=1}^{k}V(\xi_i)
    \geq -C\lambda_N^{-1}+2k\log(k\lambda_N)-2k
    \geq k\log(k\lambda_N)-C N^{\alpha}
  $.

Thus \eqref{step.chain.energy} is proved.
\end{proof}

\begin{lem}\label{lem..sum.of.step.chain.energy.estimate}
Let $0\leq m<1<n$ and $0<\alpha\leq 1$. There exist $C$ and $N_0$ such that for any $N> N_0$ and critical point $X_N=(x_1,\cdots,x_N)^T$ of \eqref{eq..Energy.mnalphaDefinition}, for $1\leq M\leq \lfloor N^{\alpha}\rfloor-1$, we have
  \begin{equation}
    \sum\limits_{k=1}^{M}\left[k\log(k\lambda_N)-CN^{\alpha}\right]
    \geq \frac{1}{2}M^2\log (N^\alpha\lambda_N)-CMN^\alpha.
  \end{equation}
  In particular, $\sum_{k=1}^{\lfloor N^\alpha\rfloor}\left[k\log(k\lambda_N)-CN^{\alpha}\right]\geq-CN^{2\alpha}$.
\end{lem}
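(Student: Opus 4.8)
The plan is to reduce the claimed inequality to an elementary estimate for $\sum_{k=1}^M k\log k$, with the genuinely delicate point (the smallness of $\lambda_N$) absorbed by the a priori bound of Proposition \ref{prop..lower.bound.of.minimal.terrace.length.weak.version}. First I would peel off the constant part: since $-CN^\alpha$ is summed over $k=1,\dots,M$, the left-hand side equals $\sum_{k=1}^M k\log(k\lambda_N)-CMN^\alpha$, so (allowing the constant to change) it suffices to prove
\begin{equation*}
  \sum_{k=1}^M k\log(k\lambda_N)\geq \tfrac12 M^2\log(N^\alpha\lambda_N)-C'MN^\alpha .
\end{equation*}
Writing $\log(k\lambda_N)=\log k+\log\lambda_N$ and $\log(N^\alpha\lambda_N)=\alpha\log N+\log\lambda_N$, and using $\sum_{k=1}^M k=\tfrac12 M(M+1)$, the two copies of $\tfrac12 M^2\log\lambda_N$ cancel and the claim becomes
\begin{equation*}
  \sum_{k=1}^M k\log k+\tfrac12 M\log\lambda_N+C'MN^\alpha\geq \tfrac12 M^2\alpha\log N .
\end{equation*}

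For the two remaining pieces I would use: (a) the integral comparison $\sum_{k=1}^M k\log k\geq\int_1^M x\log x\,\D x=\tfrac12 M^2\log M-\tfrac14 M^2+\tfrac14$, valid because $x\log x$ is increasing on $[1,\infty)$; and (b) the a priori bound $\lambda_N\geq N^{-\alpha}$ for $N>N_0$ from Proposition \ref{prop..lower.bound.of.minimal.terrace.length.weak.version}, which gives $\log\lambda_N\geq-\alpha\log N$ and hence controls the otherwise sign-indefinite term via $\tfrac12 M\log\lambda_N\geq-\tfrac12 M\alpha\log N$. Substituting both, and noting $\tfrac12 M\alpha\log N\leq C'MN^\alpha$ once $N$ is large enough that $\alpha\log N\leq N^\alpha$, the inequality reduces (after a final adjustment of $C'$) to
\begin{equation*}
  \tfrac12 M^2\log\!\big(M/N^\alpha\big)+C'MN^\alpha\geq\tfrac14 M^2 .
\end{equation*}

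This last inequality I would verify by the substitution $t:=M/N^\alpha\in(0,1]$ (recall $M\leq\lfloor N^\alpha\rfloor-1<N^\alpha$): dividing through by $MN^\alpha=tN^{2\alpha}$ turns it into $\tfrac12 t\log t+C'\geq\tfrac14 t$, which holds for every $t\in(0,1]$ as soon as $C'\geq\tfrac14+\tfrac1{2e}$, since $t\log t\geq-1/e$ there. This pins down where the main obstacle lies: the term $\tfrac12 M\log\lambda_N$ is negative and a priori unbounded below as $\lambda_N\to0$, so the whole estimate hinges on the quantitative lower bound $\lambda_N\gtrsim N^{-\alpha}$; granting that, the rest is a routine comparison of the leading $\tfrac12 M^2\log M$ against $\tfrac12 M^2\alpha\log N$, the gap being the harmless quantity $\tfrac12 M^2\log(M/N^\alpha)$ swallowed by the $CMN^\alpha$ slack.

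Finally, the ``in particular'' assertion follows by taking $M$ of order $\lfloor N^\alpha\rfloor$ in the main estimate: Proposition \ref{prop..lower.bound.of.minimal.terrace.length.weak.version} also yields $N^\alpha\lambda_N\geq1$, so $\log(N^\alpha\lambda_N)\geq0$ and the leading term $\tfrac12 M^2\log(N^\alpha\lambda_N)$ is nonnegative; what survives is $-CMN^\alpha\geq-CN^{2\alpha}$, which is exactly the stated bound.
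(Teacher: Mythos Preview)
Your argument is correct and rests on the same two ingredients as the paper's proof: the a~priori bound $\lambda_N\geq N^{-\alpha}$ from Proposition~\ref{prop..lower.bound.of.minimal.terrace.length.weak.version}, and the elementary fact $t\log t\geq -1/e$ on $(0,1]$. The organization, however, is more roundabout than necessary. The paper decomposes directly as
\[
k\log(k\lambda_N)=k\log(N^\alpha\lambda_N)+N^\alpha\cdot\frac{k}{N^\alpha}\log\frac{k}{N^\alpha},
\]
so that summing over $k$ gives $\tfrac{M(M+1)}{2}\log(N^\alpha\lambda_N)$ for the first piece (which dominates $\tfrac12 M^2\log(N^\alpha\lambda_N)$ since $\log(N^\alpha\lambda_N)\geq 0$), and each summand of the second piece is bounded below by $-N^\alpha/e$. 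This avoids your integral comparison for $\sum k\log k$ and the subsequent recombination of $\log M$ with $\alpha\log N$; in effect, by splitting off $\log(N^\alpha\lambda_N)$ rather than $\log\lambda_N$, the paper never separates the two logarithms that you later have to put back together. Your route works, but the paper's decomposition reaches the same endpoint in two lines.
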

\begin{proof}
  Note that $x\log x\geq -\frac{1}{e}$ for $0\leq x\leq 1$. By Proposition \ref{prop..lower.bound.of.minimal.terrace.length.weak.version}, $\log(N^\alpha\lambda_N)\geq0$. Therefore,
  \begin{align*}
     & \sum\limits_{k=1}^{M}\left[k\log(k\lambda_N)-CN^{\alpha}\right]\\
    \geq &
    -CMN^{\alpha}+\sum_{k=1}^{M}k\log(N^\alpha\lambda_N)+N^\alpha\sum_{k=1}^{M}\frac{k}{N^\alpha}\left(\log \frac{k}{N^\alpha}\right)\\
    \geq& -CMN^{\alpha}+\frac{M(M+1)}{2}\log(N^\alpha\lambda_N)-\frac{MN^\alpha}{e}\\
    \geq&\, \frac{1}{2}M^2\log (N^\alpha\lambda_N)-CMN^\alpha
  \end{align*}
proving the desired statement.
\end{proof}

\subsection{Proof of Theorem \ref{thm..Bunching} (Bunching Regime): 
Matching Bounds for $\lambda_N$ and $E_N$}
\label{sec..lower.bounds.terrace.lengths}
Using the results from the previous section, 
we establish here a lower bound for $\lambda_N$ and a connection 
between $\lambda_N$ and the minimum energy $E_N$. 
Together they lead to matching
lower and upper bounds for both quantities.

\noindent
{\bf Proof of Theorem \ref{thm..Bunching}(B) 
(lower bound for $\lambda_N$).}
  By Proposition \ref{prop..upper.bound.terrace.length}, we have $\lambda_N\leq 1<l_{**}$. Let $l_i=\lambda_N$. Utilizing the force balance condition 
\eqref{ForceBalanceCond}, we have 
\begin{align}
    0
    &= \sum\limits_{j\leq i\leq k, k-j\leq \lfloor N^{\alpha}\rfloor-1}V'(l_j+\cdots+l_i+\cdots+l_k)\nonumber\\
    &= V'(l_i)+F_i+\sum_{j=1\vee(i-\lfloor N^\alpha\rfloor+1)}^{i-1} G_j\nonumber\\
    &= \lambda_N^{-m-1}-\lambda_N^{-n-1}+F_i+\sum_{j=1\vee(i-\lfloor N^\alpha\rfloor+1)}^{i-1} G_j,\label{eq..force.balance.step.chain}
  \end{align}
  where $F_i$ is the summation of chains starting from the $i$th terrace and $G_j$ is the summation of chains crossing over the $i$th terrace:
  \begin{align*}
    F_i
    &= \sum_{k=i+1}^{(i+\lfloor N^{\alpha}\rfloor-1)\wedge(N-1)}V'(l_i+\cdots+l_k),\\
    G_j
    &= \sum_{k=i}^{(j+\lfloor N^{\alpha}\rfloor -1)\wedge(N-1)}V'(l_j+\cdots+l_k),\,\,1\vee(i-\lfloor N^\alpha\rfloor+1)\leq j\leq i-1.
  \end{align*}

The estimates of $F_i$ and $G_j$ are divided into the following cases.

  \noindent
  \textbf{Case (i):} $-1<m<0$, $1<n$.
  By Lemma \ref{lem..step.chain.force.estimate},
  we have
  $F_i\leq C\lambda_N^{-1}(N^\alpha\lambda_N)^{-m}$ and
  $
    G_j\leq C\lambda_N^{-1}(N^\alpha\lambda_N)^{-m}
  $
  for $1\vee(i-\lfloor N^\alpha\rfloor+1)\leq j\leq i-1$.
  Substituting these into \eqref{eq..force.balance.step.chain} leads to
  \begin{equation*}
    0\leq\lambda_N^{-m-1}-\lambda_N^{-n-1}+CN^\alpha\lambda_N^{-1}(N^\alpha\lambda_N)^{-m}.
  \end{equation*}
  Thus $\lambda_N^{-n-1}\leq \lambda_N^{-m-1}+CN^\alpha\lambda_N^{-1}(N^\alpha\lambda_N)^{-m}\leq CN^{(1-m)\alpha}\lambda_N^{-1-m}$.
  Therefore, 
  \begin{equation*}
      CN^{-\frac{(1-m)\alpha}{n-m}}\leq\lambda_N.
  \end{equation*}

  \noindent
  \textbf{Case (ii):} $m=0$, $1<n$.
  By Lemma \ref{lem..step.chain.force.estimate} again,
  we have
  $F_i\leq C\lambda_N^{-1}\log N$ and
  $
    G_j\leq C\lambda_N^{-1}\log N
  $
  for $1\vee(i-\lfloor N^\alpha\rfloor+1)\leq j\leq i-1$.
  Substituting these with $m=0$ into \eqref{eq..force.balance.step.chain} leads to
  \begin{equation*}
    0\leq\lambda_N^{-1}-\lambda_N^{-n-1}+CN^\alpha\lambda_N^{-1}\log N.
  \end{equation*}
  Thus $\lambda_N^{-n-1}\leq \lambda_N^{-1}+CN^\alpha\lambda_N^{-1}\log N\leq CN^\alpha\lambda_N^{-1}\log N$.
  Therefore, 
  \begin{equation*}
    CN^{-\frac{\alpha}{n}}(\log N)^{-\frac{1}{n}}\leq \lambda_N.
  \end{equation*}

  \noindent
  \textbf{Case (iii):} $0<m<1$, $1<n$.
  Without loss of generality, suppose that $i\leq \frac{N}{2}$. Otherwise, we switch the order of $\{l_k\}_{k=1}^{N-1}$ by setting $l_{i}'=l_{N-i}$ and 
analyze $l_{i}'$.
  By Lemma \ref{lem..step.chain.force.estimate}, we have
  \begin{equation}
    F_i\leq C\lambda_N^{-1}.\label{eq..force.chain.F_i}
  \end{equation}
 Let $S_i=\{j:l_j+\cdots+l_i\leq l_{**}, i-j\leq \lfloor N^\alpha\rfloor-1\}$ and $j_0=\min S_i$. Note that
$S_i\neq\emptyset$ as $i\in S_i$. Hence $j_0$ is well-defined.
  For $j$ satisfying $j_0\leq j\leq i-1$, utilizing Lemma \ref{lem..step.chain.force.estimate}, we have
  $
    G_j\leq C\lambda_N^{-1}
  $.
  Note that $|\{j_0,j_0+1,\cdots,i-1\}|< i-j_0+1\leq l_{**}\lambda_N^{-1}\leq C\lambda_N^{-1}$. Therefore
  \begin{equation}
    \sum_{j=j_0}^{i-1} G_j\leq C\lambda_N^{-2}.\label{eq..force.chain.G_j.part1}
  \end{equation}
  For $j$ satisfying $1\vee(i-\lfloor N^\alpha\rfloor+1)\leq j\leq j_0-1$, we have $l_j+\cdots+l_i>l_{**}$. Thus
  \begin{align*}
    G_j
    &\leq V'(l_j+\cdots+l_i)+\sum\limits_{k=1}^{\infty}V'(l_j+\cdots+l_i+k\lambda_N)\\
    &\leq V'(l_j+\cdots+l_i)+\lambda_N^{-1}\int_{l_j+\cdots+l_i}^{\infty}V'(x)\,\D x\\
    &= V'(l_j+\cdots+l_i)-\lambda_N^{-1}V(l_j+\cdots+l_i).
  \end{align*}
  In the last step, we have used the fact that $\lim_{x\rightarrow +\infty}V(x)=0$ due to $0<m<n$. Utilizing Lemma \ref{lem..step.chain.force.estimate} again, we have
  $$\sum_{j=1\vee(i-\lfloor N^\alpha\rfloor+1)}^{j_0-1} V'(l_j+\cdots+l_i)\leq C\lambda_N^{-1}.$$
  Notice that $W'(x)=V(x)<0$ for $x>l_{**}$. Thus $W(\cdot)$ is monotonically decreasing on $(l_{**},\infty)$ for $0<m<n$.
  Then
  \begin{align}
    \sum_{j=1\vee(i-\lfloor N^\alpha\rfloor+1)}^{j_0-1}G_j
    &\leq C\lambda_N^{-1}-\lambda_N^{-1}\sum_{j=1\vee(i-\lfloor N^\alpha\rfloor+1)}^{j_0-1}V(l_j+\cdots+l_i)\nonumber\\
    &\leq C\lambda_N^{-1}-\lambda_N^{-1}\left[V(l_{**})+\sum_{k=1}^{\lfloor N^\alpha\rfloor}V(l_{**}+k\lambda_N)\right]\nonumber\\
    &\leq C\lambda_N^{-1}-\lambda_N^{-1}\left[\lambda_N^{-1}\int_{l_{**}}^{l_{**}+N^{\alpha}\lambda_N}V(x)\,\D x\right]\nonumber\\
    &\leq C\lambda_N^{-1}-\lambda_N^{-2} W(l_{**}+N^{\alpha}\lambda_N)+\lambda_N^{-2}W(l_{**})\nonumber\\
    &\leq C\lambda_N^{-2}-\lambda_N^{-2} W((l_{**}+1)N^{\alpha}\lambda_N),\label{eq..sum.force.chain.G_j}
  \end{align}
  where in the last inequality we have used the assumption $1\leq \lambda_N^{-1}$ and the result $1\leq N^\alpha \lambda_N$ from Proposition \ref{prop..lower.bound.of.minimal.terrace.length.weak.version}. Now \eqref{eq..sum.force.chain.G_j} leads to
  \begin{equation}
    \sum_{j=1\vee(i-\lfloor N^\alpha\rfloor+1)}^{j_0-1}G_j
    \leq C\lambda_N^{-2}+C \lambda_N^{-2}(N^\alpha\lambda_N)^{1-m}
    \leq C \lambda_N^{-2}(N^\alpha\lambda_N)^{1-m}.\label{eq..force.chain.G_j.part2A}
  \end{equation}
  Substituting \eqref{eq..force.chain.F_i}, \eqref{eq..force.chain.G_j.part1}, and \eqref{eq..force.chain.G_j.part2A} into \eqref{eq..force.balance.step.chain}, we obtain
  \begin{equation*}
    0\leq \lambda_N^{-m-1}-\lambda_N^{-n-1}+C \lambda_N^{-2}(N^\alpha\lambda_N)^{1-m}.
  \end{equation*}
  Thus $\lambda_N^{-n-1}\leq \lambda_N^{-m-1}+C \lambda_N^{-2}(N^\alpha\lambda_N)^{1-m}\leq CN^{(1-m)\alpha}\lambda_N^{-1-m}$. Therefore, 
  \begin{equation*}
      CN^{-\frac{(1-m)\alpha}{n-m}}\leq \lambda_N.
  \end{equation*}
This completes the proof of Theorem \ref{thm..Bunching}(B).

\noindent
{\bf Proof of Theorem \ref{thm..Bunching}(C) 
(lower bound $w_N$).}
  This follows immediately from the lower bound in Theorem \ref{thm..Bunching}(B) by the fact that $N\lambda_N\leq w_N$.

To continue, note that there is an interesting relation between the estimates 
of the minimum energy $E_N$ and the minimal terrace length $\lambda_N$. 
Roughly speaking, 
$E_N$ is a ``monotonically increasing function''
of $\lambda_N$ (when it is sufficiently small).
This fact is then used to relate
the lower (respectively, upper) bound of $E_N$ to 
the lower (respectively, upper) bound of $\lambda_N$.

\noindent
{\bf Proof of Theorem \ref{thm..Bunching}(A)(lower bound for $E_N$) and 
(B)(upper bound for $\lambda_N$).}
  We first claim that for $-1<m<1$, $1<n$, there exist $C$ and $N_0$ such that for any $N> N_0$ and critical point $X_N=(x_1,\cdots,x_N)^T$ of \eqref{eq..Energy.mnalphaDefinition}, the following holds.
  \begin{equation}
    E[X_N]\geq
    \left\{\begin{array}{ll}
      C N^{1+(1-m)\alpha}\lambda_N^{-m}, & -1<m<0, 1<n, 0<\alpha\leq 1,\\
      N^{1+\alpha}\log(N^{\alpha}\lambda_N)-CN^{1+\alpha}, & m=0, 1<n, 0<\alpha<1,\\
      \frac{1}{2}N^2\log (N\lambda_N)-CN^2, & m=0, 1<n, \alpha=1,\\
      -CN\lambda_N^{-1}(N^\alpha\lambda_N)^{1-m}, & 0<m<1, 1<n, 0<\alpha\leq 1.
    \end{array}
    \right.\label{eq..energy.lower.bound}
  \end{equation}
Note that all the functions in the right side of
\eqref{eq..energy.lower.bound} are {\em increasing} functions of $\lambda_N$.
  Given the above, the energy upper bound for $E_N$ (from Theorem
  \ref{thm..EnergyUpperBound}) implies the desired
  upper bound for $\lambda_N$ while the lower bound for 
  $\lambda_N$ (from Theorem \ref{thm..Bunching}(B) which is just proved)
implies the desired lower bound for $E_N$. More precisely,

\noindent
(1) for $-1<m<0, 1<n, 0<\alpha\leq 1$,
\begin{align*}
&\,\,\,CN^{1+\frac{(1-m)n\alpha}{n-m}} \geq E_N\,\,\,\text{(from Theorem \ref{thm..EnergyUpperBound} (Case C))}\\
\text{and} & \,\,\,E[X_N] \geq C N^{1+(1-m)\alpha}\lambda_N^{-m}
\,\,\,\text{(from \eqref{eq..energy.lower.bound})}\\
\text{imply}&\,\,\,
\lambda_N \leq CN^{-\frac{(1-m)\alpha}{n-m}}\,\,\,
\text{(for any minimizers),}
\end{align*}
while
\begin{align*}
\lambda_N \geq CN^{-\frac{(1-m)\alpha}{n-m}}\,\,\,\text{(from Theorem \ref{thm..Bunching}(B))}
\,\,\,\,\,\,\text{implies}\,\,\,\,\,\,
E_N \geq CN^{1+\frac{(1-m)n\alpha}{n-m}};
\end{align*}
(2) for $m=0, 1<n, 0<\alpha\leq 1$,
\begin{align*}
&\,\,\, \frac{(n-1)\alpha}{n}N^{1+\alpha}\log N
\geq E_N\,\,\,\text{(from Theorem \ref{thm..EnergyUpperBound} (Case C))}\\ 
\text{and} & \,\,\,E[X_N] \geq 
      N^{1+\alpha}\log(N^{\alpha}\lambda_N)
\,\,\,\text{(from \eqref{eq..energy.lower.bound})}\\
\text{imply} & \,\,\,
\lambda_N \leq CN^{-\frac{\alpha}{n}}\,\,\,\text{(for any minimizers),}
\end{align*}
while
\begin{align*}
&\,\,\,\lambda_N \geq CN^{-\frac{\alpha}{n}}\left(\log N\right)^{-\frac{1}{n}}
\,\,\,\text{(from Theorem \ref{thm..Bunching}(B))}\\
\text{implies}&\,\,\,
E_N \geq 
\frac{(n-1)\alpha}{n}N^{1+\alpha}\log N - CN^{1+\alpha}\log\log N.
\end{align*}
The other cases: $m=0,\,\,\alpha=1$ and $0<m,\,\,0<\alpha \leq 1$ 
follow similarly.

  Thus it remains to establish \eqref{eq..energy.lower.bound}. Its proof
  is divided into several cases.

  \noindent
  \textbf{Case (i):} $-1<m<0$, $1<n$. First let
  $P = \{(i,j): 1\leq i < j \leq N\}$. Then $P=P_1\cup P_2$, where
  \begin{align*}
    \textstyle P_1&=\left\{(i,j)\in P\colon j-i\leq 
    \Big\lfloor\frac{1}{2} N^{\alpha}\Big\rfloor\right\}\\
\quad\text{and}\quad
    P_2&=\left\{(i,j)\in P\colon \Big\lfloor \frac{1}{2} N^\alpha
    \Big\rfloor<j-i\leq \lfloor N^\alpha\rfloor\right\}.
  \end{align*}
  Note that $|P_1|=C N^{1+\alpha}+O(N)$ and $|P_2|=C N^{1+\alpha}+O(N)$.
  If $(i,j)\in P_2$, then for sufficiently large $N$, we have
  $
    x_j-x_i\geq \textstyle \frac{1}{2} N^{\alpha} \lambda_N\geq l_*
  $
  by the lower bound of $\lambda_N$ from 
  Proposition \ref{prop..lower.bound.of.minimal.terrace.length.weak.version}.
  Hence for $(i,j)\in P_2$, we have
  $
    \textstyle V(x_j-x_i)\geq V(\frac{1}{2}N^{\alpha}\lambda_N)
    \geq C (N^\alpha\lambda_N)^{-m}.
  $
On the other hand, for $(i,j)\in P_1$, we have the trivial lower bound
$V(x_j-x_i)\geq \min V \geq C >0$.
  Therefore,
  \begin{align*}
    E[X_N]
    &= \sum_{(i,j)\in P_1}V(x_j-x_i)+\sum_{(i,j)\in P_2}V(x_j-x_i)\\
    &\geq C(C N^{1+\alpha}+O(N))+(C N^{1+\alpha}+O(N))C (N^\alpha\lambda_N)^{-m}\\
    &\geq C N^{1+(1-m)\alpha}\lambda_N^{-m}.
  \end{align*}
(In the above, note that the second term is much bigger
than the first due to the facts that $m<0$ and $N^\alpha\lambda_N \gg 1$ 
(Proposition \ref{prop..lower.bound.of.minimal.terrace.length.weak.version}).)

  \noindent
  \textbf{Case (ii):} $m=0$, $1<n$, and $0<\alpha<1$. We have
  \begin{align*}
    E[X_N]
    &= \left(\sum_{i=1}^{N-\lfloor N^{\alpha}\rfloor}+\sum_{i=N-\lfloor N^{\alpha}\rfloor+1}^{N-1}\right)\left[\sum_{k=i}^{(i+\lfloor N^\alpha\rfloor-1)\wedge(N-1)} V(l_i+\cdots+l_{k})\right]\\
    &\geq (N-\lfloor N^{\alpha}\rfloor) [N^{\alpha}\log (N^\alpha\lambda_N)-CN^{\alpha}]
    +\sum_{k=1}^{\lfloor N^{\alpha}\rfloor-1}[k\log (k\lambda_N)-CN^{\alpha}]\\
    &\geq N^{1+\alpha}\log(N^{\alpha}\lambda_N)-\lfloor N^\alpha\rfloor N^\alpha\log(N^\alpha \lambda_N)-CN^{1+\alpha}-CN^{2\alpha}\\
    &\geq N^{1+\alpha}\log(N^{\alpha}\lambda_N)-CN^{1+\alpha}
  \end{align*}
  where in the last inequality we have used the estimate
  $\lambda_N\leq l_*$ from Proposition 
  \ref{prop..upper.bound.terrace.length} and the fact that
  for $0 < \alpha < 1$, the terms 
  $N^{2\alpha}\log N^\alpha, N^{2\alpha}$ are both bounded by 
  $N^{1+\alpha}$ for large $N$.

  \noindent
  \textbf{Case (iii):} $m=0$, $1<n$, and $\alpha=1$. We have
  \begin{align*}
    E[X_N]
    &= \sum_{i=1}^{N-1}\left[\sum_{k=i}^{N-1} V(l_i+\cdots+l_{k})\right]
    \geq \sum_{k=1}^{N-1}[k\log (k\lambda_N)-CN]\\
    &\geq \frac{1}{2}N^2\log (N\lambda_N)-CN^2.
  \end{align*}

  \noindent
  \textbf{Case (iv):} $0<m<1$, $1<n$. We have
  \begin{align*}
    E[X_N]
    &= \sum_{i=1}^{N-1}\left[\sum_{k=i}^{(i+\lfloor N^\alpha\rfloor-1)\wedge(N-1)} V(l_i+\cdots+l_{k})\right]\\
    &\geq
-C\sum_{i=1}^{N-1}\left[
\lambda_N^{-1}(N^\alpha\lambda_N)^{1-m}
\right]\\
    &\geq -CN\lambda_N^{-1}(N^\alpha\lambda_N)^{1-m}.
  \end{align*}
Note that in the above, we have used
Lemma \ref{lem..step.chain.energy.estimate} in the first inequalities 
for Cases (ii)--(iv) to
give a lower bound for the energy of a step chain while the calculus
Lemma \ref{lem..sum.of.step.chain.energy.estimate} is only used for Cases
(ii) and (iii).

Thus \eqref{eq..energy.lower.bound} is completely verified.

\begin{rmk}
  In many applications, people use the truncated and shifted potential instead of merely truncated potential here. In other words, their pair potential reads as $\tilde{V}(x_j-x_i)=V(x_j-x_i)-V_N$, where $V_N$ is a constant which may depend on $N$. Although the minimum energy is modified with this shifted potential, the minimizer remains the same as ours.
\end{rmk}

\subsection{Completion of Proof of Theorem \ref{thm..Bunching}: 
Upper Bounds for $w_N$} \label{sec..UpperBoundSystemSize}

Before proving the statement, we note again the obvious lower bound 
$N\lambda_N \lesssim w_N$. On the other hand, 
from the scaling law for the minimum energy $E_N$, we expect that the bunch shape is
almost linear so that we should also have $w_N \lesssim N\lambda_N$. 
We will verify this in the case $-1<m\leq 0$, $1<n$, and $\alpha=1$ in which
the attraction part of the pair potential is quite strong. 
In this regime, we are able to obtain an estimate for $w_N$ which is 
``sharp'' in the sense that the exponents for the lower and upper bounds 
``almost match''. This is stronger than
the statement of Theorem \ref{thm..Bunching}(C) but is proved only
for a smaller range of the parameters.

\begin{prop}\label{thm..size.system.bunching.alpha=1.further.result}
  Let $-1<m\leq 0$, $1<n$, and $\alpha=1$. For any $\delta>0$ and $0<\delta'<\frac{1}{2}$, there exist $C_{\delta'}$ and $C_{\delta,\delta'}$ 
such that for any $N$ and minimizer $X_N=(x_1,\cdots,x_N)^T$ of 
\eqref{eq..Energy.mnalphaDefinition}, we have
  \begin{equation}\label{wN.upper.bd.sharp}
    w_{N,\delta'} \leq\left\{
    \begin{array}{ll}
      C_{\delta'}N^{\frac{n-1}{n-m}}, & -1<m<0,\\
      C_{\delta,\delta'}N^{\frac{n-1}{n}+\delta}, & m=0,
    \end{array}
    \right.
  \end{equation}
  where $w_{N,\delta'}:= x_{N-\lfloor\delta' N\rfloor}-x_{\lfloor \delta' N\rfloor}$.
\end{prop}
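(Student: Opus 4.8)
The plan is to derive a lower bound for $E[X_N]$ that is driven by the \emph{long-range} pairs joining the two truncated end blocks, and then to pit it against the sharp upper bound for $E_N$ from Theorem~\ref{thm..EnergyUpperBound} (Case C, $\alpha=1$) so that $w_{N,\delta'}$ is forced to be as small as claimed. Write $a=\lfloor\delta'N\rfloor$ and $b=N-\lfloor\delta'N\rfloor$, and call $(i,j)$ a \emph{cross pair} if $i\le a$ and $j\ge b$. There are at least $c\,\delta'^2N^2$ cross pairs; each interacts (since $\alpha=1$ gives $\lfloor N^\alpha\rfloor=N\ge j-i$), and each satisfies $x_j-x_i\ge x_b-x_a=w_{N,\delta'}$. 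The structural fact I exploit is that throughout this regime $V>0$ everywhere and $V$ is increasing on $(l_*,\infty)=(1,\infty)$, so $V(x_j-x_i)\ge V(w_{N,\delta'})$ for every cross pair once $w_{N,\delta'}\ge 1$ (and $w_{N,\delta'}\ge(b-a-1)\lambda_N\gg1$ by Proposition~\ref{prop..lower.bound.of.minimal.terrace.length.weak.version}).

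For $-1<m<0$ this already finishes the argument. As every term $V(x_p-x_q)>0$, I discard all non-cross pairs and keep only the cross pairs:
\begin{equation*}
E[X_N]\ \ge\ \sum_{i\le a,\,j\ge b}V(x_j-x_i)\ \ge\ C\delta'^2 N^2\,V(w_{N,\delta'})\ \ge\ C\delta'^2 N^2\,w_{N,\delta'}^{-m},
\end{equation*}
using $V(x)=\tfrac{1}{|m|}x^{-m}+\tfrac1n x^{-n}\ge\tfrac{1}{|m|}x^{-m}$. Combining with $E[X_N]=E_N\le C'N^{1+\frac{(1-m)n}{n-m}}$ and dividing gives $w_{N,\delta'}^{-m}\le C_{\delta'}N^{\frac{(1-m)n}{n-m}-1}=C_{\delta'}N^{\frac{-m(n-1)}{n-m}}$, that is $w_{N,\delta'}\le C_{\delta'}N^{\frac{n-1}{n-m}}$ (the case $w_{N,\delta'}<1$ being trivial). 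It is exactly the discarding of all other pairs, legitimate only because $V>0$, that makes the strong-attraction case so clean.

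For $m=0$ the positivity $V>0$ still holds, but now $V(x)\ge\log x$ grows only logarithmically, so the cross-pair contribution is merely $C\delta'^2N^2\log w_{N,\delta'}$; since $\delta'^2$ lies far below the leading constant $\tfrac{n-1}{2n}$, discarding the remaining pairs is too wasteful to yield the sharp exponent. Instead I retain the short- and medium-range pairs and exploit that the two-sided estimate is \emph{tight}: by Theorem~\ref{thm..Bunching}(A), $E_N=\tfrac{n-1}{2n}N^2\log N+O(N^2\log\log N)$. Splitting $E[X_N]$ into cross and non-cross pairs, the non-cross pairs (containing all pairs of index-gap $<(1-2\delta')N$, hence the entire near field) still deliver almost the full leading energy; re-running the chain-energy estimates of Lemmas~\ref{lem..step.chain.energy.estimate}--\ref{lem..sum.of.step.chain.energy.estimate} over the non-cross index ranges yields a lower bound $\tfrac{n-1}{2n}N^2\log N-C\delta'^2N^2\log(N\lambda_N)-CN^2\log\log N$. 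Feeding this together with the cross surplus $C\delta'^2N^2\log w_{N,\delta'}$ into the tight value of $E_N$ leaves only the $O(N^2\log\log N)$ slack for the surplus, so that $\delta'^2\log w_{N,\delta'}\le \delta'^2\log(N\lambda_N)+C\log\log N$, and hence $\log w_{N,\delta'}\le \tfrac{n-1}{n}\log N+C\delta'^{-2}\log\log N$ by the lower bound $\lambda_N\ge cN^{-1/n}(\log N)^{-1/n}$ of Theorem~\ref{thm..Bunching}(B). Since $(\log N)^{C\delta'^{-2}}\le N^{\delta}$ for large $N$, this gives $w_{N,\delta'}\le C_{\delta,\delta'}N^{\frac{n-1}{n}+\delta}$.

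The main obstacle is the bookkeeping for $m=0$: I must show that excising the $\sim(\delta'N)^2$ cross pairs from the chain-energy lower bound costs at most $\sim\delta'^2N^2\log(N\lambda_N)$, so that the near field still saturates the leading term $\tfrac{n-1}{2n}N^2\log N$ up to the $O(N^2\log\log N)$ gap. This is delicate precisely because the logarithmic potential is only critically summable; the cross pairs sit at the largest index-gaps, where the chain estimate is weakest, and one must verify that their removal does not erode the leading constant. One must also track that all constants depend only on $m,n,\delta,\delta'$ and not on $N$, and invoke Proposition~\ref{prop..lower.bound.of.minimal.terrace.length.weak.version} to guarantee $w_{N,\delta'}\ge N\lambda_N\gg1$ so that the monotonicity of $V$ on $(l_*,\infty)$ applies throughout.
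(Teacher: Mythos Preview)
Your approach is essentially the paper's: the same cross/non-cross partition (the paper calls the cross pairs $P_2$ and the rest $P_1\cup P_3$), the same use of the monotonicity of $V$ on $(l_*,\infty)$ to bound each cross term below by $V(w_{N,\delta'})$, and for $m=0$ the same reliance on Lemmas~\ref{lem..step.chain.energy.estimate}--\ref{lem..sum.of.step.chain.energy.estimate} to show that the non-cross chains already supply $(\tfrac12-\delta'^2)N^2\log(N\lambda_N)-C_{\delta'}N^2$, so that the cross contribution is pinned by the $O(N^2\log\log N)$ slack between the upper and lower energy bounds. Your observation that $V>0$ throughout for $-1<m\le0$ makes the $m<0$ case marginally cleaner than the paper's version, which bounds the non-cross part by $-CN^2$ rather than by $0$; the conclusion is identical.

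One slip to correct in the $m=0$ endgame: from $\log w_{N,\delta'}\le \log(N\lambda_N)+C\delta'^{-2}\log\log N$ you need the \emph{upper} bound $\lambda_N\le C'N^{-1/n}$ from Theorem~\ref{thm..Bunching}(B) to conclude $\log(N\lambda_N)\le\tfrac{n-1}{n}\log N+C$; the lower bound $\lambda_N\ge cN^{-1/n}(\log N)^{-1/n}$ you cite points the wrong way at this step (it was already consumed earlier, when you converted $\tfrac12 N^2\log(N\lambda_N)$ into $\tfrac{n-1}{2n}N^2\log N-CN^2\log\log N$ in the non-cross estimate). The paper avoids this by applying the $\lambda_N$ lower bound to the full coefficient $(\tfrac12-\delta'^2)$ at once, so that only the lower bound is ever needed; either route works once the correct inequality is invoked.
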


\begin{proof}
  Let $P=\left\{(i,j)\colon 1\leq i<j\leq N\right\}$. We partition $P$ into 
  $P_1, P_2, P_3$ where
  \begin{align*}
    P_1
    &= \left\{(i,j)\in P\colon 1\leq i\leq \lfloor \delta' N \rfloor,\,\,i<j<N-\lfloor\delta' N\rfloor)\right\},\\
    P_2
    &= \left\{(i,j)\in P\colon 1\leq i\leq \lfloor \delta' N \rfloor, \,\,N-\lfloor\delta' N\rfloor\leq j\leq N \right\},\\
    P_3
    &= \left\{(i,j)\in P\colon \lfloor \delta' N \rfloor<i<j \leq N\right\}.
  \end{align*}
  Note that
  $
  |P_1|+|P_3|=\textstyle \left(\frac{1}{2}-\delta'^2\right) N^2+O(N)
  $
  and
  $
  |P_2|=\textstyle \delta'^2 N^2+O(N)
  $.
  In addition, for $(i,j)\in P_2$, we have
  $x_j-x_i\geq w_{N,\delta'}\geq(N-2\lfloor\delta'N\rfloor)\lambda_N\gg l_*$
  for sufficiently large $N$.

  For $-1<m<0$, the monotonicity of $V(\cdot)$ on $(l_*,\infty)$ leads to
  $V(x_j-x_i)\geq V(w_{N,\delta'})\geq C w_{N,\delta'}^{-m}$ for 
$(i,j)\in P_2$ and sufficiently large $N$.
  Thus, 
  \begin{align*}
    E_N
    &= \sum_{(i,j)\in P_1\cup P_3}V(x_j-x_i)+\sum_{(i,j)\in P_2}V(x_j-x_i)\\
    &\geq \textstyle -C\left[(\frac{1}{2}-\delta'^2)N^2+O(N)\right]
+C w_{N,\delta'}^{-m}\left[\delta'^2 N^2+O(N)\right]\\
    &\geq C_{\delta'}w_{N,\delta'}^{-m}N^2.
  \end{align*}
  Recall that $E_N\leq CN^{1+\frac{(1-m)n}{n-m}}$.
  Therefore $w_{N,\delta'}\leq C_{\delta'}N^{\frac{n-1}{n-m}}$.

  If $m=0$, then $V(x_j-x_i)\geq V(w_{N,\delta'})\geq \log w_{N,\delta'}$ for 
$(i,j)\in P_2$ and sufficiently large $N$. 
By Lemmas \ref{lem..step.chain.energy.estimate} and \ref{lem..sum.of.step.chain.energy.estimate}, we have
  \begin{align*}
    E_N
    &= 
\sum_{(i,j)\in P_1}V(x_j-x_i)
+\sum_{(i,j)\in P_3}V(x_j-x_i)
+\sum_{(i,j)\in P_2}V(x_j-x_i)\\
    &= 
\sum_{i=1}^{\lfloor \delta'N \rfloor} \sum_{j: (i,j)\in P_1}V(x_j-x_i)
+ \sum_{i={\lfloor \delta'N \rfloor}+1}^{N-1} \sum_{j: (i,j)\in P_3}V(x_j-x_i)
+\sum_{(i,j)\in P_2}V(x_j-x_i)\\
    &\geq \sum_{k=N-\lfloor\delta' N\rfloor-\lfloor \delta' N\rfloor}^{N-\lfloor\delta' N\rfloor-1}\left[k\log (k\lambda_N)-CN\right]
    +\sum_{k'=1}^{N-\lfloor\delta' N\rfloor -1}\left[k'\log (k'\lambda_N)-CN\right]\\
    &~~+(\log w_{N,\delta'})[\delta'^2 N^2+O(N)]\\
    &\geq \left(\frac{1}{2}-\delta'^2\right)N^2\log (N\lambda_N)+\delta'^2N^2\log w_{N,\delta'}-C_{\delta'}N^2\\
    &\geq \frac{n-1}{n}\left(\frac{1}{2}-\delta'^2\right)N^2\log N+\delta'^2N^2\log w_{N,\delta'}-C_{\delta'}N^2\log\log N.
  \end{align*}
Note that in the first inequality above, we have used the change of
indices $k=N-\lfloor\delta'N\rfloor - i$ and 
$k'=N- i$.
  Moreover, in the last inequality we have used the fact that
$\lambda_N\geq CN^{-\frac{1}{n}}(\log N)^{-\frac{1}{n}}$.
Recall that in Case C in Section \ref{sec..heuristic.scaling.laws}, we have $E_N\leq \frac{n-1}{2n}N^2\log N+CN^2$.
Therefore, for any $\delta>0$, we have 
for sufficiently large $N$ that
$$w_{N,\delta'}\leq C_{\delta'} N^{\frac{n-1}{n}}(\log N)^{C_{\delta,\delta'}'}\leq C_{\delta,\delta'}N^{\frac{n-1}{n}+\delta}$$
completing the proof of \eqref{wN.upper.bd.sharp}.
\end{proof}

Now we prove the upper bounds in Theorem \ref{thm..Bunching}(C). 
The statements to be proved ``seem weaker'' than Proposition 
\ref{thm..size.system.bunching.alpha=1.further.result} 
as the exponents in the lower and upper bounds do not match. However
they cover a wider range of $m$ and $\alpha$ and are applicable for 
any critical point of $E$. 
The proof is based on a novel covering idea which is 
completely different from the earlier parts of this paper and also of 
\cite{Luo2016-p737-771}.

\noindent{\bf Proof of Theorem \ref{thm..Bunching}(C).}
  Choose any $\delta$ such that $0<\delta<\alpha\leq 1$. We define 
the terrace index set $I:=\{1,2,\cdots,N-1\}$ and terrace intervals 
$T_i=[x_i,x_{i+1})$ for $i\in I$. Let
  \begin{equation*}
    M:=\left\lfloor\frac{x_{N-\lfloor N^\delta\rfloor}-x_{\lfloor N^\delta\rfloor}}{l_{**}}\right\rfloor.
  \end{equation*}
  Since $N\lambda_N \gg 1$, we have $M\gg 1$ for sufficiently large $N$. 
We further define
  \begin{align}
  K_\tau & =[x_{\lfloor N^\delta\rfloor}+(\tau-1)l_{**},x_{\lfloor N^\delta\rfloor}+\tau l_{**}) & \text{for}\,\,\, \tau=1,\cdots,M,\\
  \bar{K}_\tau & = 
K_{\tau-1}\cup K_\tau\cup K_{\tau+1} & \text{for}\,\,\,\tau=2,\cdots,M-1.
\end{align}
  We call $K_\tau$ a $T$-dense (terrace-dense) interval if $\left|\{j:T_j\subset \bar{K}_\tau\}\right|\geq N^{\frac{\delta}{3}}$, and $T$-sparse (terrace-sparse) interval if $\left|\{j:T_j\subset \bar{K}_\tau\}\right|<N^{\frac{\delta}{3}}$.
  We further define dense, sparse, and boundary
terrace index set as follows:
  \begin{align}
    I_\text{D}
        &= \left\{i\in I: T_i\subset \bar{K}_\tau\quad\text{for some T-dense}\,\,K_\tau\right\},\\
    I_\text{S}
    &= \left\{i\in I\backslash{I_\text{D}}: x_i\in\bigcup_{\tau=2}^{M-1}K_\tau\right\},\\
    I_\text{B}
    &= I\backslash(I_\text{D}\cup I_\text{S}).
  \end{align}

Now we analyze the length of the intervals in the above sets. 

  {\bf 1. Estimation of $\sum_{i\in I_\text{B}}|T_i|$.}
For any $i\in I_\text{B}$, one of the following
three cases holds:
$$
i< \lfloor N^\delta\rfloor,\,\,\,
i\geq N-\lfloor N^\delta\rfloor,\,\,\,
\text{or}\,\,\,
x_i\in K_1\cup K_M\cup\left[x_{\lfloor N^\delta\rfloor}+M l_{**}, x_{N-\lfloor N^\delta\rfloor}\right).
$$
In the last case, we have
$T_i\subset K_1\cup K_2 \cup K_M\cup\left[x_{\lfloor N^\delta\rfloor}+M l_{**}, x_{N-\lfloor N^\delta\rfloor}\right)$. By
Proposition \ref{prop..upper.bound.terrace.length}, 
we have $l_i\leq l_*$ for all $i\in I$. Therefore,
  \begin{equation}
    \sum_{i\in I_\text{B}}|T_i|\leq 2N^\delta l_*+4l_{**}\leq C N^\delta.\label{eq..I_B.length}
  \end{equation}

  {\bf 2. Estimation of $\sum_{i\in I_\text{D}}|T_i|$.}
  For any $i\in I_\text{D}$, $T_i$ is covered by at most three $T$-dense $\bar{K}_\tau$. Thus
  \begin{equation}
    |I_D|\geq \frac{1}{3}\sum_{K_j\,\,T\text{-dense}}\left|\{j:T_j\subset\bar{K}_j\}\right|
    \geq \frac{1}{3}N^{\frac{\delta}{3}}|\{\tau:K_\tau\,\,T\text{-dense}\}|,\label{eq..I_D.cardinality}
  \end{equation}
  where we have used the definition of $T$-dense interval in the second inequality.
  Using the facts $|\bar{K}_\tau|=3l_{**}$, $|I_D|\leq N$, and \eqref{eq..I_D.cardinality}, the length contributed by $I_D$ can be estimated as:
  \begin{align}
    \sum_{i\in I_\text{D}}|T_i|
    &\leq\sum_{K_\tau\,\,T\text{-dense}}|\bar{K}_\tau|\nonumber\\
    &= 3l_{**}\Big|\{\tau:K_\tau\,\,T\text{-dense}\}\Big|
    \leq 9l_{**}N^{-\frac{\delta}{3}}|I_D|
    \leq CN^{1-\frac{\delta}{3}}.\label{eq..I_D.length}
  \end{align}

  {\bf 3. Estimation of $\sum_{i\in I_\text{S}}|T_i|$.}
  For any $i\in I_\text{S}$, there exists a $\tau\in\{2,\cdots, M\}$ such that $x_i\in K_\tau$.
  By the definition of the $T$-sparse interval, we deduce that
  \begin{equation*}
    |\{(j,k):j\leq i\leq k, l_j+\cdots+l_k\leq l_{**}\}|\leq N^{\frac{2\delta}{3}}.
  \end{equation*}
  Thus
  \begin{equation}
    \sum_{j\leq i\leq k, l_j+\cdots+l_k\leq l_{**}}V'(l_j+\cdots+ l_k)
    \geq N^{\frac{2\delta}{3}}V'(l_i)\geq -N^{\frac{2\delta}{3}} l_i^{-n-1}.\label{eq..less.than.l_**}
  \end{equation}

  Let $j_0=\min\{l_j:l_j+\cdots+l_i\leq l_{**}\}$ and $k_0=\max\{l_k:l_i+\cdots+l_k\leq l_{**}\}$. Obviously, $T_{j_0}, T_{k_0}\subset\bigcup_{\tau=1}^{M} K_\tau$.

  Without loss of generality, we suppose that $i\leq \frac{N}{2}$. Hence $i+\lfloor \frac{1}{3}N^\alpha\rfloor\leq N-1$ for sufficiently large $N$.
  Since $V'(l_i+\cdots+l_k)>0$ for $l_j+\cdots+l_k> l_{**}>l_*$, we have
  \begin{equation}
    \sum_{j\leq i\leq k, l_j+\cdots+l_k>l_{**}}V'(l_j+\cdots+ l_k)
    >\sum_{j=1\wedge\left(i-\lfloor \frac{1}{3}N^\alpha\rfloor\right)}^{j_0-1}
    \sum_{k=k_0+1}^{i+\lfloor \frac{1}{3}N^\alpha\rfloor}V'(l_j+\cdots+ l_k).\label{eq..more.than.l_**}
  \end{equation}
  For sufficiently large $N$, we have
  \begin{equation*}
    k_0-i+1
    \leq l_{**}\lambda_N^{-1}
    \leq \left\{
    \begin{array}{ll}
    CN^{\frac{(1-m)\alpha}{n-m}}, & -1<m<1, m\neq 0,\\
    CN^{\frac{\alpha}{n}}(\log N)^{\frac{1}{n}}, & m=0,
    \end{array}
    \right.
  \end{equation*}
  where we have used the lower bound for $\lambda_N$ established 
  in Theorem \ref{thm..Bunching}(B) and the fact that $l_{**}< C$.
  Consequently,
  $i+\lfloor\frac{1}{3}N^\alpha\rfloor-k_0\geq \lfloor\frac{1}{4}N^\alpha\rfloor+1$ for sufficiently large $N$.
  Fix any $j\in\{1\wedge\left(i-\lfloor \frac{1}{3}N^\alpha\rfloor\right),\cdots,j_0-1\}$, we have
  \begin{align}
    \sum_{k=k_0+1}^{i+\lfloor \frac{1}{3}N^\alpha\rfloor}V'(l_j+\cdots l_k)
    &\geq \sum_{k=k_0+1}^{i+\lfloor \frac{1}{3}N^\alpha\rfloor}V'(l_{**}+(k-k_0)l_{*})\nonumber\\
    &\geq \sum_{k'=1}^{\lfloor \frac{1}{4}N^\alpha\rfloor+1}V'(l_{**}+k'l_{*})\nonumber\\
    &\geq l_*^{-1}\int_{l_{**}}^{\frac{1}{4}N^\alpha l_*}V'(x)\,\D x\nonumber\\
    &\geq V\left(\frac{1}{4}N^\alpha\right)-V(l_{**}).\label{eq..fix.j.force.chain.}
  \end{align}
  Collecting \eqref{eq..less.than.l_**}, \eqref{eq..more.than.l_**}, and \eqref{eq..fix.j.force.chain.} together with the fact that
  $$
    \left|\left\{1\wedge\left(i-\lfloor \frac{1}{3}N^\alpha\rfloor\right),\cdots,j_0-1\right\}\right|\geq \lfloor N^{\delta}\rfloor+1,
  $$
  we obtain for sufficiently large $N$ that
  \begin{align*}
    \sum_{j\leq i\leq k}V'(l_j+\cdots+ l_k)
    &\geq \left(\sum_{j\leq i\leq k, l_j+\cdots+l_k\leq l_{**}}+\sum_{j\leq i\leq k, l_j+\cdots+l_k>l_{**}}\right)V'(l_j+\cdots+ l_k)\nonumber\\
    &\geq -N^{\frac{2\delta}{3}} l_i^{-n-1}+N^\delta\left[V\left(\frac{1}{4}N^\alpha\right)-V(l_{**})\right]\\
    &\geq -N^{\frac{2\delta}{3}} l_i^{-n-1}+CN^\delta.
  \end{align*}
  Thus $l_i\leq CN^{-\frac{\delta}{3(n+1)}}$.
  We now estimate the length contributed by $I_S$ as
  \begin{equation}
    \sum_{i\in I_\text{S}}|T_i|
    \leq CN^{-\frac{\delta}{3(n+1)}} |I_\text{S}|\leq CN^{1-\frac{\delta}{3(n+1)}}.\label{eq..I_S.length}
  \end{equation}
  The proof is concluded by combining 
  \eqref{eq..I_B.length}, \eqref{eq..I_D.length}, and \eqref{eq..I_S.length}.

The whole Theorem \ref{thm..Bunching} is thus proved.

Corollary
\ref{thm..BunchingCriticalPoint} follows immediately as the technique in 
the proof of the upper
bounds in Theorem \ref{thm..Bunching}(C) only makes use of the force balance
condition \eqref{ForceBalanceCond}.

\subsection{Proof of Theorem \ref{thm..Non-bunching} (Non-Bunching Regime)}
\label{sec..Proof-Non-bunching}

Now we prove Theorem \ref{thm..Non-bunching} 
which covers the non-bunching regime corresponding to the Case E
in Fig. \ref{figure..PhaseDiagram}.

\noindent
{\bf Proof of (A) (energy scaling law).} 
Suppose $\lambda_N > C$ for some constant $C$ ---
this will be proved in the next step, (B).

Now, for case (i) ($1<m<n$ and $0<\alpha\leq 1$), 
we have $V(x)<0$ for $x> x_*=(\frac{m}{n})^{\frac{1}{n-m}}$. Hence,
\begin{align*}
    E_N
    &=\sum_{i=1}^{N-1}\sum_{k=1}^{\lfloor N^\alpha\rfloor\wedge(N-i)} V(x_{i+k}-x_i)
    \geq 
    \sum_{i=1}^{N-1}\sum_{\{k\geq 1:\,\, x_{i+k}-x_i \geq x_*\}} V(x_{i+k}-x_i)\\
    &\geq (N-1)\min_{\{C_0:\,\, C_0 \geq x_*\}}
\sum_{k'=1}^{\infty} V(C_0+(k'-1)\lambda_N)\\
    &\geq -CN.
\end{align*}
Next, for case (ii) ($-1<m<n$ and $\alpha=0$), we have 
$E_N\geq (N-1)\min V\geq -|\min V|N=-CN$. 
The statements follow after combining with the upper bounds for $E_N$.

\noindent
{\bf Proof of (B) (minimal terrace length).}
  The upper bound $\lambda_N\leq C'$ is already proved in 
Proposition \ref{prop..upper.bound.terrace.length}. In particular, $C'$ can be $l_*$.
  Now we show the lower bound $C\leq \lambda_N$.

  If $0<\alpha\leq 1$, then we follow the proof of the lower bound in Theorem \ref{thm..Bunching}(B) for $0<m<1$, $1<n$, and $0<\alpha\leq 1$. We still have \eqref{eq..force.chain.F_i}, \eqref{eq..force.chain.G_j.part1}, and \eqref{eq..sum.force.chain.G_j} for $1<m<n$ and $0<\alpha\leq 1$.
  Note that $N^\alpha \lambda_N\geq 1$. Thus for $1<m<n$ and $0<\alpha\leq 1$, 
we have $W((l_{**}+1)N^{\alpha}\lambda_N)\geq-C$ for some positive constant $C$. Now from \eqref{eq..sum.force.chain.G_j}, we have
  \begin{equation}
    \sum_{j=1\vee(i-\lfloor N^\alpha\rfloor+1)}^{j_0-1}G_j
    \leq C \lambda_N^{-2}.\label{eq..force.chain.G_j.part2B}
  \end{equation}
  Substituting \eqref{eq..force.chain.F_i}, \eqref{eq..force.chain.G_j.part1}, and \eqref{eq..force.chain.G_j.part2B} into \eqref{eq..force.balance.step.chain}, we obtain
  \begin{equation*}
    0\leq \lambda_N^{-m-1}-\lambda_N^{-n-1}+C \lambda_N^{-2}
  \end{equation*}
  so that $\lambda_N^{-n-1}\leq \lambda_N^{-m-1}+C \lambda_N^{-2}\leq C\lambda_N^{-1-m}$. Hence $C\leq \lambda_N$ for some constant $C$ 
(as $m< n$).

  If $\alpha=0$, using our convention, we have
$N^\alpha=1$, i.e. the interaction is nearest neighbor.
It can easily be shown that all the critical points are {\em linear 
chains} with $l_i=l_*$ for all $i$ as $l_*$ is the only critical point of $V$. 
Then the result follows immediately. But we present 
the following argument which works more generally even for 
{\em finite range} interaction.
For this, similar to Proposition 
\ref{prop..lower.bound.of.minimal.terrace.length.weak.version}, we still
employ the force balance condition \eqref{ForceBalanceCond} leading to
  \begin{equation*}
    0\leq \lambda_N^{-m-1}-\lambda_N^{-n-1}+C.
  \end{equation*}
  Note that either $\frac{1}{2}\leq \lambda_N$ or $\lambda_N<\frac{1}{2}$. 
  For the latter case, we have 
  $$\lambda_N^{-n-1}(1-(\frac{1}{2})^{n-m})\leq \lambda^{-n-1}-\lambda^{-m-1}\leq C$$ and thence $C\leq \lambda_N$.

\noindent
{\bf Proof of (C) (system size).}
This follows immediately from $C\leq \lambda_N\leq C'$.

  Theorem \ref{thm..Non-bunching} is thus completely proved.

Again Corollary \ref{thm..NonBunchingCriticalPoint} follows immediately
as the proof of the lower bounds for $\lambda_N$ in both
Theorems \ref{thm..Bunching} and \ref{thm..Non-bunching} only
makes use the force balance condition \eqref{ForceBalanceCond}.

\appendix
\section{Proof of Theorem \ref{thm..EnergyUpperBound}: 
Upper Bounds for $E_N$}\label{ProofEnergyUpperBound}

We first state the following simple lemma without proof.
\begin{lem}\label{lem..IntegralAppro}
    (i) If $\phi(x)\geq 0$ and is monotonically decreasing on $[1,+\infty)$, then
    \begin{equation}
        \left|\sum_{k=1}^{\lfloor N^\alpha\rfloor} \phi(k)-\int_{1}^{N^\alpha}\phi(x)\,\D x\right|\leq\phi(1)
    \end{equation}
    (ii) If $\phi(x)\geq 0$ and is monotonically increasing on $[1,+\infty)$, then
    \begin{equation}
        \left|\sum_{k=1}^{\lfloor N^\alpha\rfloor} \phi(k)-\int_{1}^{N^\alpha}\phi(x)\,\D x\right|\leq\phi(N^\alpha).
    \end{equation}
\end{lem}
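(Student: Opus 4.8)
The plan is to use the standard integral-comparison (staircase) estimates for monotone functions, taking care that the upper limit $N^\alpha$ of the integral need not be an integer. Throughout I would write $M=\lfloor N^\alpha\rfloor$, so that $M\le N^\alpha<M+1$, and set $S=\sum_{k=1}^{M}\phi(k)$ and $I=\int_1^{N^\alpha}\phi(x)\,\D x$. The goal in each part is to sandwich $S-I$ between two explicit bounds of the asserted size, the boundary contribution coming from the fractional part $N^\alpha-M$.

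For part (i), where $\phi$ is nonnegative and decreasing, I would start from the pointwise comparisons $\int_k^{k+1}\phi(x)\,\D x\le \phi(k)$ and $\phi(k)\le\int_{k-1}^{k}\phi(x)\,\D x$ (the latter valid for $k\ge 2$), both immediate from monotonicity. Summing the first over $k=1,\dots,M$ gives $S\ge\int_1^{M+1}\phi(x)\,\D x$, while summing the second over $k=2,\dots,M$ gives $S\le\phi(1)+\int_1^{M}\phi(x)\,\D x$. Comparing each with $I$ and using $M\le N^\alpha<M+1$ together with $\phi\ge0$, the leftover integrals $\int_{N^\alpha}^{M+1}\phi$ and $\int_M^{N^\alpha}\phi$ are both nonnegative, so one obtains $0\le S-I\le\phi(1)$, which is exactly the claim.

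For part (ii), where $\phi$ is increasing, the two pointwise inequalities reverse to $\phi(k)\le\int_k^{k+1}\phi$ and $\int_{k-1}^{k}\phi\le\phi(k)$. Summing appropriately yields $\phi(1)+\int_1^{M}\phi\le S\le\phi(M)+\int_1^{M}\phi$, that is, $S-\int_1^M\phi\in[\phi(1),\phi(M)]$. The only new ingredient is the endpoint term $I-\int_1^M\phi=\int_M^{N^\alpha}\phi$, which lies in $[0,\phi(N^\alpha)]$ since $N^\alpha-M<1$ and $\phi$ is increasing. Subtracting the two ranges gives $\phi(1)-\phi(N^\alpha)\le S-I\le\phi(M)$, and because $\phi(1)\ge0$ and $\phi(M)\le\phi(N^\alpha)$ (again by $M\le N^\alpha$ and monotonicity), this collapses to $|S-I|\le\phi(N^\alpha)$.

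The only subtlety — and the single place the two parts genuinely diverge — is the handling of the fractional part $N^\alpha-M$: for a decreasing $\phi$ the surviving endpoint contribution is controlled by the left-end value $\phi(1)$, whereas for an increasing $\phi$ it is the right-end value $\phi(N^\alpha)$ that dominates. I do not expect any real obstacle; the estimates are elementary telescoping comparisons, and the whole proof reduces to keeping track of which endpoint term remains after the integral and the sum are matched up.
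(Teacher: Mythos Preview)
Your argument is correct: the staircase comparisons $\int_k^{k+1}\phi\le\phi(k)\le\int_{k-1}^k\phi$ (decreasing case) and their reverses (increasing case), together with the bookkeeping for the fractional endpoint $N^\alpha-\lfloor N^\alpha\rfloor$, give exactly the stated bounds. Note that the paper explicitly declares this lemma ``without proof'', so there is no authors' argument to compare against; your write-up would serve perfectly well as the omitted verification.
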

\begin{proof}[Proof of Theorem \ref{thm..EnergyUpperBound}.]
    We estimate the upper bound of $E[X_N^0]$ where $X_N^0=(x_1^0,\cdots,x_N^0)^T$ with $x_i^0=(i-1)l_0$ for $i=1,\cdots,N$ with some appropriate $l_0$. For convenience, we write
\begin{align}
  E[X_N^0] = e_m[X_N^0]-e_n[X_N^0]
\end{align}
where for $-1 < s$ and $s\neq 0$, we have
\begin{align}
  e_s[X_N^0]
  &= 
    \sum\limits_{1\leq i<j\leq N,\,\,j-i\leq \lfloor N^{\alpha}\rfloor}-\frac{1}{s}|x_j-x_i|^{-s} =
    -\sum_{k=1}^{\lfloor N^\alpha\rfloor}(N-k)\frac{1}{s}k^{-s}l_0^{-s}
\nonumber\\
  &= 
    -\frac{1}{s}l_0^{-s}\left(N\sum_{k=1}^{\lfloor N^\alpha\rfloor}k^{-s}-\sum_{k=1}^{\lfloor N^\alpha\rfloor}k^{1-s}\right), 
\end{align}
while for $s=0$, we have
\begin{align}
  e_s[X_N^0]
  &= 
    \sum\limits_{1\leq i<j\leq N,\,\,j-i\leq \lfloor N^{\alpha}\rfloor}\log |x_j-x_i|
    = \sum_{k=1}^{\lfloor N^\alpha\rfloor}(N-k)\log (kl_0)
  \nonumber\\
  &= 
    \sum_{k=1}^{\lfloor N^\alpha\rfloor}(N-k)\log (kl_0).
\end{align}
We will estimate the above summation by applying part (i) of 
Lemma \ref{lem..IntegralAppro} 
for $\phi(x)=x^{-s}\,(s>0)$ on $[1,+\infty)$ but part (ii) 
for 
$\phi(x)=x^{-s}\,(s<0)$, $\log x$, and $x\log x$ on $[1,+\infty)$. 

Without loss of generality, we assume $N\geq 2$.
We first give some useful upper bounds for $e_m[X^0_N]$ and $-e_n[X^0_N]$
with $m,n\neq 0$:
\begin{enumerate}
\item[(i)] for $-1<m<0$, 
\begin{align}
    e_m[X^0_N]
    &=-\frac{1}{m}l_0^{-m}\left(N\sum_{k=1}^{\lfloor N^\alpha\rfloor}k^{-m}-\sum_{k=1}^{\lfloor N^\alpha\rfloor}k^{1-m}\right)\nonumber\\
    &\leq \frac{1}{|m|}l_0^{-m}\left(N\int_1^{N^\alpha}x^{-m}\,\D x+N^{1-\alpha m}-\int_1^{N^\alpha}x^{1-m}\,\D x+N^{\alpha(1-m)}\right)\nonumber\\
    &\leq \frac{1}{|m|}l_0^{-m}\left(\frac{1}{1-m}N^{1+\alpha(1-m)}-\frac{1}{1-m}N+2N^{1-\alpha m}\right)\nonumber\\
    &\leq \frac{1}{|m|}l_0^{-m}\left(\frac{1}{1-m}N^{1+\alpha(1-m)}+2N^{1-\alpha m}\right).\label{eq..e_m,<0}
\end{align}
\item[(ii)] for $0<m<1$, 
\begin{align}
    e_m[X^0_N]
    &= \frac{1}{m}l_0^{-m}\left(\sum_{k=1}^{\lfloor N^\alpha\rfloor}k^{1-m}-N\sum_{k=1}^{\lfloor N^\alpha\rfloor}k^{-m}\right)\nonumber\\
    &\leq \frac{1}{m}l_0^{-m}\left(\int_1^{N^\alpha}x^{1-m}\,\D x+N^{\alpha(1-m)}-N\int_1^{N^\alpha}x^{-m}\,\D x+N^{1-\alpha m}\right)\nonumber\\
    &\leq \frac{1}{m}l_0^{-m}\left(\frac{1}{2-m}N^{\alpha(2-m)}-\frac{1}{1-m}N^{1+\alpha(1-m)}+\frac{1}{1-m}N+2N^{1-\alpha m}\right)\nonumber\\
    &\leq \frac{1}{m}\left(\frac{1}{2-m}-\frac{1}{1-m}\right)l_0^{-m}N^{1+\alpha(1-m)}+\frac{1}{m}\left(2+\frac{1}{1-m}\right)l_0^{-m}N\nonumber\\
    &\leq -\frac{1}{m(1-m)(2-m)}l_0^{-m}N^{1+\alpha(1-m)}+\frac{3}{m(1-m)}l_0^{-m}N.\label{eq..e_m,01}
\end{align}
\item[(iii)] for $1<m$, 
\begin{align}
    e_m[X^0_N]
    &= -\frac{1}{m}l_0^{-m}\left(N\sum_{k=1}^{\lfloor N^\alpha\rfloor}k^{-m}-\sum_{k=1}^{\lfloor N^\alpha\rfloor}k^{1-m}\right)\nonumber\\
    &\leq -\frac{1}{m}l_0^{-m}(N+2^{-m}N-N^\alpha)\nonumber\\
    &\leq -\frac{1}{m}(2l_0)^{-m}N.\label{eq..e_m,>1}
\end{align}
\item[(iv)] for $0<n<1$,
\begin{align}
    -e_n[X^0_N]
    \leq \frac{1}{n}l_0^{-n}N\sum_{k=1}^{\lfloor N^\alpha\rfloor}k^{-n}
    &\leq \frac{1}{n}l_0^{-n}N\left(\int_{1}^{N^\alpha} x^{-n}\,\D x+N^{-\alpha n}\right)\nonumber\\
    &\leq \frac{1}{n(1-n)}l_0^{-n}N^{1+\alpha(1-n)}+\frac{1}{n}l_0^{-n}N^{1-\alpha n}.\label{eq..e_n,01}
\end{align}
\item[(v)] for $1<n$,
\begin{align}
    -e_n[X^0_N]
    \leq \frac{1}{n}l_0^{-n}N\sum_{k=1}^{\lfloor N^\alpha\rfloor}k^{-n}
    &\leq \frac{1}{n}l_0^{-n}N\left(\int_{1}^{N^\alpha} x^{-n}\,\D x+1\right)\nonumber\\
    &= \frac{1}{n}l_0^{-n}N\left(1+\frac{1}{n-1}-\frac{1}{n-1}N^{-\alpha(n-1)}\right)\nonumber\\
    &\leq \frac{1}{n-1}l_0^{-n}N.\label{eq..e_n,>1}
\end{align}
\end{enumerate}

Now we proceed to prove the theorem.
We remark that the classification and computation in the 
following cases are quite tedious. 
As the goal is to obtain upper bounds, we certainly would like the bounds to be 
``as low as'' possible and preferably with negative prefactor. 
Hence in some cases, we will be very careful in choosing the constants
-- see the sub-cases A1, A2 and C4 in the following.

\noindent
{\bf Case (A): $-1 < m < n <1$, $0 < \alpha \leq 1$.} 
Overall, we will choose $l_0 \sim N^{-\alpha}$ and 
the bounds obtained are of the type $E_N \lesssim N$. The first two cases
cover the regime $mn > 0$ while the rest cover the regime $mn\leq 0$.

\noindent
{\bf (A1)} $0<m<n<1$. Collecting \eqref{eq..e_n,01} and \eqref{eq..e_m,01} together with $l_0=C_0 N^{-\alpha}$ and $C_0=\left(\frac{2m(1-m)(2-m)}{n(1-n)}\right)^{\frac{1}{n-m}}$, we obtain
\begin{align}
    E[X^0_N]
    &\leq \left(\frac{1}{n(1-n)}C_0^{-n}-\frac{1}{m(1-m)(2-m)}C_0^{-m}\right)N^{1+\alpha}\nonumber\\
    &~~+\frac{1}{n}C_0^{-n}N+\frac{3}{m(1-m)}C_0^{-m}N^{1+\alpha m}\nonumber\\
    &\leq -\frac{1}{2m(1-m)(2-m)}C_0^{-m}N^{1+\alpha}+\left(\frac{1}{n}C_0^{-n}+\frac{3}{m(1-m)}C_0^{-m}\right)N^{1+\alpha m},
\end{align}
where we used the fact that $$\frac{1}{n(1-n)}C_0^{-n}-\frac{1}{m(1-m)(2-m)}C_0^{-m}=-\frac{1}{2m(1-m)(2-m)}C_0^{-m}.$$

\noindent
{\bf (A2)} $-1<m<n<0$.  For $-1<n<0$, we have
\begin{align}
    -e_n[X^0_N]
    &= \frac{1}{|n|}l_0^{-n}\left(\sum_{k=1}^{\lfloor N^\alpha\rfloor}k^{1-n}-N\sum_{k=1}^{\lfloor N^\alpha\rfloor}k^{-n}\right)\nonumber\\
    &\leq \frac{1}{|n|}l_0^{-n}\left(\int_1^{N^\alpha}x^{1-n}\,\D x+N^{\alpha(1-n)}-N\int_1^{N^\alpha}x^{-n}\,\D x+N^{1-\alpha n}\right)\nonumber\\
    &\leq \frac{1}{|n|}l_0^{-n}\left(\frac{1}{2-n}N^{\alpha(2-n)}-\frac{1}{1-n}N^{1+\alpha(1-n)}+\frac{1}{1-n}N+2N^{1-\alpha n}\right)\nonumber\\
    &\leq \frac{1}{|n|}\left(\frac{1}{2-n}-\frac{1}{1-n}\right)l_0^{-n}N^{1+\alpha(1-n)}+\frac{1}{|n|}\left(2+\frac{1}{1-n}\right)l_0^{-n}N^{1-\alpha n}\nonumber\\
    &\leq -\frac{1}{|n|(1-n)(2-n)}l_0^{-n}N^{1+\alpha(1-n)}+\frac{3}{|n|(1-n)}l_0^{-n}N^{1-\alpha n}.\label{eq..e_n,<0}
\end{align}
Collecting \eqref{eq..e_n,<0} and \eqref{eq..e_m,<0} together with $l_0=C_0 N^{-\alpha}$ and $C_0=\left(\frac{|m|(1-m)}{2|n|(1-n)(2-n)}\right)^{\frac{1}{n-m}}$, we obtain
\begin{align}
    E[X^0_N]
    &\leq \left(\frac{1}{|m|(1-m)}C_0^{-m}-\frac{1}{|n|(1-n)(2-n)}C_0^{-n}\right)N^{1+\alpha}\nonumber\\
    &~~+\frac{2}{|m|}C_0^{-m}N+\frac{3}{|n|(1-n)}C_0^{-n}N\nonumber\\
    &\leq -\frac{1}{2|n|(1-n)(2-n)}C_0^{-n}N^{1+\alpha}+\left(\frac{2}{|m|}C_0^{-m}+\frac{3}{|n|(1-n)}C_0^{-n}\right)N,
\end{align}
where we used the fact that $$\frac{1}{|m|(1-m)}C_0^{-m}-\frac{1}{|n|(1-n)(2-n)}C_0^{-n}=-\frac{1}{2|n|(1-n)(2-n)}C_0^{-n}.$$
\noindent
{\bf (A3)} $0=m<n<1$ and $0<\alpha<1$. We set $l_0=N^{-\alpha}\leq 1$. Then
\begin{align}
    e_m[X^0_N]
    &= -\sum_{k=1}^{\lfloor N^\alpha\rfloor}k\log (k l_0)+N\sum_{k=1}^{\lfloor N^\alpha\rfloor}\log(k l_0)\nonumber\\
    &= -\sum_{k=1}^{\lfloor N^\alpha\rfloor}k\log k+N\sum_{k=1}^{\lfloor N^\alpha\rfloor}\log k+(\log l_0)\left(N\lfloor N^\alpha\rfloor-\sum_{k=1}^{\lfloor N^\alpha\rfloor} k \right)\nonumber\\
    &\leq N\int_1^{N^\alpha}\log x\,\D x+N\log N^\alpha-\alpha(\log N)\left(N\lfloor N^\alpha\rfloor-\sum_{k=1}^{\lfloor N^\alpha\rfloor} k \right)\nonumber\\
    &\leq N^{1+\alpha}\log N^\alpha+N\log N^\alpha-\alpha N^{1+\alpha}\log N+\alpha N\log N+\alpha N^{2\alpha}\log N\nonumber\\
    &= 2N\log N^\alpha+N^{2\alpha}\log N^\alpha.\label{eq..e_m,zero.caseA}
\end{align}
Collecting \eqref{eq..e_n,01} and \eqref{eq..e_m,zero.caseA} together with $l_0=N^{-\alpha}$, we obtain
\begin{align}
    E[X^0_N]
    &\leq \frac{1}{n(1-n)}N^{1+\alpha}+\frac{1}{n}N+2N\log N^\alpha+N^{2\alpha}\log N^\alpha\nonumber\\
    &\leq \frac{1}{n(1-n)}N^{1+\alpha}+\frac{4}{n}N^{1\wedge 2\alpha}\log N^\alpha.
\end{align}
{\bf (A4)} $0=m<n<1$ and $\alpha=1$. We set $l_0=N^{-1}\leq 1$. Then
\begin{align}
    e_m[X^0_N]
    &= -\sum_{k=1}^{N}k\log (k l_0)+N\sum_{k=1}^{N}\log(k l_0)\nonumber\\
    &= -\sum_{k=1}^{N}k\log k+N\sum_{k=1}^{N}\log k+(\log l_0)\left(N^2-\sum_{k=1}^{N} k \right)\nonumber\\
    &\leq -\int_1^N x\log x\,\D x+N\log N+N\int_1^{N}\log x\,\D x+N\log N-\frac{1}{2}N(N-1)\log N\nonumber\\
    &\leq -\frac{1}{2}N^2\log N+\frac{1}{4}N^2+N^2\log N+2N\log N-\frac{1}{2}N^2\log N+\frac{1}{2}N\log N\nonumber\\
    &\leq 3N\log N+\frac{1}{4}N^{2}.\label{eq..e_m,zero1.caseA}
\end{align}
Collecting \eqref{eq..e_n,01} and \eqref{eq..e_m,zero1.caseA} together with $l_0=N^{-1}$, we obtain
\begin{align}
    E[X^0_N]
    &\leq \frac{1}{n(1-n)}N^{2}+\frac{1}{n}N+3N\log N+\frac{1}{4}N^{2}\nonumber\\
    &\leq \left(\frac{1}{n(1-n)}+\frac{1}{4}\right)N^{2}+\frac{4}{n}N\log N.
\end{align}
{\bf (A5)} $-1<m<0=n$ and $0<\alpha<1$. We set $l_0=N^{-\alpha}\leq 1$. Then
\begin{align}
    -e_n[X^0_N]
    &= \sum_{k=1}^{\lfloor N^\alpha\rfloor}k\log (k l_0)-N\sum_{k=1}^{\lfloor N^\alpha\rfloor}\log(k l_0)\nonumber\\
    &= \sum_{k=1}^{\lfloor N^\alpha\rfloor}k\log k-N\sum_{k=1}^{\lfloor N^\alpha\rfloor}\log k-(\log l_0)\left(N\lfloor N^\alpha\rfloor-\sum_{k=1}^{\lfloor N^\alpha\rfloor} k \right)\nonumber\\
    &\leq \int_1^{N^\alpha}x\log x\,\D x+N^\alpha\log N^\alpha-N\int_1^{N^\alpha}\log x\,\D x+N\log N^\alpha\nonumber\\
    &~~+\alpha(\log N)\left(N\lfloor N^\alpha\rfloor-\sum_{k=1}^{\lfloor N^\alpha\rfloor} k \right)\nonumber\\
    &\leq \frac{1}{2}N^{2\alpha}\log N^\alpha+N^\alpha\log N^\alpha-N^{1+\alpha}\log N^\alpha+N\log N^\alpha+\alpha N^{1+\alpha}\log N\nonumber\\
    &= \left(\frac{1}{2}N^{2\alpha}+N^{\alpha}+N\right)\log N^\alpha.\label{eq..e_n,zero.caseA}
\end{align}
Collecting \eqref{eq..e_m,<0} and \eqref{eq..e_n,zero.caseA} together with $l_0=N^{-\alpha}$, we obtain
\begin{align}
    E[X^0_N]
    &\leq \frac{1}{|m|(1-m)}N^{1+\alpha}+\frac{2}{|m|}N+\left(\frac{1}{2}N^{2\alpha}+N^{\alpha}+N\right)\log N^\alpha\nonumber\\
    &\leq \frac{1}{|m|(1-m)}N^{1+\alpha}+\frac{5}{|m|}N^{1\wedge 2\alpha}\log N^\alpha.
\end{align}
{\bf (A6)} $-1<m<0=n$ and $\alpha=1$. We set $l_0=N^{-1}\leq 1$. Then
\begin{align}
    -e_n[X^0_N]
    &= \sum_{k=1}^{N}k\log (k l_0)-N\sum_{k=1}^{N}\log(k l_0)\nonumber\\
    &= \sum_{k=1}^{N}k\log k-N\sum_{k=1}^{N}\log k-(\log l_0)\left(N^2-\sum_{k=1}^{N} k \right)\nonumber\\
    &\leq \int_1^{N}x\log x\,\D x+N\log N-N\int_1^{N}\log x\,\D x+N\log N\nonumber\\
    &~~+(\log N)\frac{1}{2}N(N-1)\nonumber\\
    &\leq \frac{1}{2}N^{2}\log N+N\log N-N^{2}\log N+N\log N+\frac{1}{2}N^{2}\log N\nonumber\\
    &= 2N\log N.\label{eq..e_n,zero1.caseA}
\end{align}
Collecting \eqref{eq..e_m,<0} and \eqref{eq..e_n,zero1.caseA} together with $l_0=N^{-\alpha}$, we obtain
\begin{align}
    E[X^0_N]
    &\leq \frac{1}{|m|(1-m)}N^{2}+\frac{2}{|m|}N+2N\log N\nonumber\\
    &\leq \frac{1}{|m|(1-m)}N^{2}+\frac{4}{|m|}N\log N.
\end{align}
\noindent
{\bf (A7)} $-1<m<0$, $0<n<1$. Collecting \eqref{eq..e_n,01} and \eqref{eq..e_m,<0} together with $l_0=N^{-\alpha}$, we obtain
\begin{align}
    E[X^0_N]
    &\leq \left(\frac{1}{n(1-n)}+\frac{1}{|m|(1-m)}\right)N^{1+\alpha}+\left(\frac{2}{|m|}+\frac{1}{n}\right)N.
\end{align}
\noindent
{\bf Case (B): $-1 < m < n =1$.}
In this case, we set $l_0=N^{-\alpha}$. Then
\begin{align}
    -e_n[X^0_N]
    &= l_0^{-1}\left(N\sum_{k=1}^{\lfloor N^\alpha\rfloor}k^{-1}-\lfloor N^\alpha\rfloor\right)
    \leq N^{1+\alpha}\left(\int_1^{N^\alpha}x^{-1}\,\D x+1\right)\nonumber\\
    &=N^{1+\alpha}\log N^\alpha+N^{1+\alpha}.\label{eq..e_n.CaseB}
\end{align}
Collecting \eqref{eq..e_m,<0}, \eqref{eq..e_m,01}, \eqref{eq..e_m,zero.caseA}, and \eqref{eq..e_m,zero1.caseA}, we obtain
\begin{align}
  e_m[X_N^0]
  &= \left\{
  \begin{array}{ll}
    \frac{1}{|m|(1-m)}N^{1+\alpha}+\frac{2}{|m|}N, & -1<m<0,\\
    -\frac{1}{m(1-m)(2-m)}N^{1+\alpha}+\frac{3}{m(1-m)}N^{1+\alpha m}, & 0<m<1,\\
    2N\log N^\alpha+N^{2\alpha}\log N^\alpha, & m=0, 0<\alpha<1,\\
    3N\log N+\frac{1}{4}N^{2}, & m=0, \alpha=1,
  \end{array}
  \right.
\end{align}
which can be summarized as
\begin{align}
    E[X^0_N]
    &\leq CN^{1+\alpha}\log N.
\end{align}
{\bf Case (C): $-1 < m < 1 < n$.}
Overall, we will take $l_0\sim N^{-\frac{\alpha(1-m)}{n-m}}$ 
but in Case C4, it is crucial that we obtain a negative prefactor. 
Hence the choice of the constant in $l_0$ is important.\\
{\bf (C1)} $-1<m<0$.
Collecting \eqref{eq..e_n,>1} and \eqref{eq..e_m,<0} together with $l_0=N^{-\frac{\alpha(1-m)}{n-m}}$, we obtain
\begin{equation}
    E[X^0_N]\leq \left(\frac{1}{n-1}+
    \frac{1}{|m|(1-m)}\right)N^{1+\frac{n(1-m)\alpha}{n-m}}+\frac{2}{|m|}N^{1-\alpha+\frac{n(1-m)\alpha}{n-m}}.
\end{equation}
{\bf (C2)}
For $m=0$ and $0<\alpha<1$, we set $l_0=N^{-\frac{\alpha}{n}}\leq 1$. Thus
\begin{align}
    e_m[X^0_N]
    &= -\sum_{k=1}^{\lfloor N^\alpha\rfloor}k\log (k l_0)+N\sum_{k=1}^{\lfloor N^\alpha\rfloor}\log(k l_0)\nonumber\\
    &= -\sum_{k=1}^{\lfloor N^\alpha\rfloor}k\log k+N\sum_{k=1}^{\lfloor N^\alpha\rfloor}\log k+(\log l_0)\left(N\lfloor N^\alpha\rfloor-\sum_{k=1}^{\lfloor N^\alpha\rfloor} k \right)\nonumber\\
    &\leq N\int_1^{N^\alpha}\log x\,\D x+N\log N^\alpha-\frac{\alpha}{n}(\log N)\left(N\lfloor N^\alpha\rfloor-\sum_{k=1}^{\lfloor N^\alpha\rfloor} k \right)\nonumber\\
    &\leq N^{1+\alpha}\log N^\alpha+N\log N^\alpha-\frac{\alpha}{n}N^{1+\alpha}\log N+\frac{\alpha}{n}N\log N+\frac{\alpha}{n}N^{2\alpha}\log N\nonumber\\
    &= \frac{(n-1)\alpha}{n}N^{1+\alpha}\log N+2(N+N^{2\alpha})\log N.\label{eq..e_m,zero,alpha<1}
\end{align}
Collecting \eqref{eq..e_n,>1} and \eqref{eq..e_m,zero,alpha<1} together with $l_0=N^{-\frac{\alpha}{n}}$, we obtain
\begin{align}
    E[X^0_N]
    &\leq \frac{1}{n-1}N^{1+\alpha}+\frac{(n-1)\alpha}{n}N^{1+\alpha}\log N+2(N+N^{2\alpha})\log N\nonumber\\
    &\leq \frac{(n-1)\alpha}{n}N^{1+\alpha}\log N+CN^{1+\alpha}.
\end{align}
{\bf (C3)}
For $m=0$ and $\alpha=1$, we set $l_0=N^{-\frac{1}{n}}\leq 1$. Thus
\begin{align}
    e_m[X^0_N]
    &= -\sum_{k=1}^{N}k\log (k l_0)+N\sum_{k=1}^{N}\log(k l_0)\nonumber\\
    &= -\sum_{k=1}^{N}k\log k+N\sum_{k=1}^{N}\log k+(\log l_0)\left(N^2-\sum_{k=1}^{N} k \right)\nonumber\\
    &\leq -\int_1^N x\log x\,\D x+N\log N+N\int_1^{N}\log x\,\D x+N\log N-\frac{1}{2n}N(N-1)\log N\nonumber\\
    &\leq -\frac{1}{2}N^2\log N+\frac{1}{4}N^2+N^2\log N+2N\log N-\frac{1}{2n}N^2\log N+\frac{1}{2n}N\log N\nonumber\\
    &\leq \frac{n-1}{2n}N^2\log N+3N\log N+\frac{1}{4}N^{2}.\label{eq..e_m,zero,alpha=1}
\end{align}
Collecting \eqref{eq..e_n,>1} and \eqref{eq..e_m,zero,alpha=1} together with $l_0=N^{-\frac{1}{n}}$, we obtain
\begin{align}
    E[X^0_N]
    &\leq \frac{1}{n-1}N^{2}+\frac{n-1}{n}N^2\log N+3N\log N+\frac{1}{4}N^{2}\nonumber\\
    &\leq \frac{n-1}{2n}N^{2}\log N+CN^2.
\end{align}
{\bf (C4)} $0<m<1$.
Collecting \eqref{eq..e_n,>1} and \eqref{eq..e_m,01} together with $l_0=C_0 N^{-\frac{\alpha(1-m)}{n-m}}$ and $C_0=\left(\frac{2m(1-m)(2-m)}{n-1}\right)^{\frac{1}{n-m}}$, we obtain
\begin{align}
    E[X^0_N]
    &\leq
    \left(\frac{1}{n-1}C_0^{-n}-\frac{1}{m(1-m)(2-m)}C_0^{-m}\right)N^{1+\frac{n(1-m)\alpha}{n-m}}\nonumber\\
    &~~+\frac{3}{m(1-m)}C_0^{-m}
    N^{1+\frac{m(1-m)\alpha}{n-m}}\nonumber\\
    &\leq
    -\frac{C_0^{-n}}{n-1}N^{1+\frac{n(1-m)\alpha}{n-m}} +\frac{3C_0^{-m}}{m(1-m)}
    N^{1+\frac{m(1-m)\alpha}{n-m}},
\end{align}
where we used the fact that $\frac{1}{n-1}C_0^{-n}-\frac{1}{m(1-m)(2-m)}C_0^{-m}=-\frac{1}{n-1}C_0^{-n}$.\\
\noindent
{\bf Case (D): $1=m < n$.} We set $l_0 = 1$. Then
\begin{align}
    e_m[X^0_N]
    &= l_0^{-1}\left(-N\sum_{k=1}^{\lfloor N^\alpha\rfloor}k^{-1}+\lfloor N^\alpha\rfloor\right)\nonumber\\
    &\leq -N\int_1^{N^\alpha}x^{-1}\, \D x+2N
    =-N\log N^\alpha+2N.
\end{align}
By \eqref{eq..e_n,>1} with $l_0=1$, we have
\begin{equation}
    -e_n[X^0_N]
    \leq \frac{1}{n-1}N.
\end{equation}
Therefore,
\begin{align}
    E[X^0_N]
    &\leq -N\log N^\alpha+\frac{3}{n-1}N.
\end{align}
{\bf Case (E).} There are two cases depending on whether 
$\alpha>0$ or not. But in either cases, $l_0 \sim 1$.\\
\noindent
{\bf (E1)} $1<m<n$ and $0<\alpha\leq 1$.
Collecting \eqref{eq..e_n,>1} and \eqref{eq..e_m,>1} together with $l_0=(\frac{2^{m+1}m}{n-1})^{\frac{1}{n-m}}$, we obtain
\begin{align}
    E[X^0_N]
    &\leq \left(\frac{1}{n-1}l_0^{-n}-\frac{1}{m}(2l_0)^{-m}\right)N\nonumber\\
    &=-\frac{1}{n-1}l_0^{-n}N,
\end{align}
where we used the fact that $\frac{1}{n-1}l_0^{-n}-\frac{2^{-m}}{m}l_0^{-m}=-\frac{1}{n-1}l_0^{-n}$.

\noindent
{\bf (E2)} $-1 < m < n$ and $\alpha=0$. 
Let $l_0=1$. Then $E[X^0_N]=(N-1)\min V\leq CN.$

All the cases are thus considered.
\end{proof}

\section*{Acknowledgements}
  This work was partially supported by the Hong Kong Research Grants Council General Research Fund 16313316 and the Purdue Research Refresh Award.

\bibliographystyle{spmpsci} 

\end{document}